\newtheorem{theorem}{Theorem}[section]
\newtheorem{lemma}[theorem]{Lemma}
\newtheorem{corollary}[theorem]{Corollary}
\newtheorem{proposition}[theorem]{Proposition}
\numberwithin{equation}{section}
\theoremstyle{definition}
\newtheorem{definition}[theorem]{Definition}
\theoremstyle{remark}
\newtheorem{remark}[theorem]{Remark}
\newtheorem{question}[theorem]{Question}
\newcommand{\arrow}{\rightarrow}
\newcommand{\tensor}{\otimes}
\renewcommand{\L}{\mathcal L}
\newcommand{\algtensor}{\odot}
\newcommand{\betatensor}{{\tensor_\beta}}
\newcommand{\AbA}{A_1 \betatensor A_2}
\newcommand{\dd}{\ensuremath{^{**}}}
\newcommand{\AbB}{A \betatensor B}
\newcommand{\Z}{\mathcal Z}
\newcommand{\Hb}{H_\beta}
\newcommand{\closure}[1]{\overline{#1}}
\newcommand{\T}{\ensuremath{\mathcal T}}
\newcommand{\compose}{\circ}
\newcommand{\lsc}{lower semi-continuous }
\newcommand{\Cu}{\ensuremath{{\mathcal C}\hspace{-.8pt}u}}
\newcommand{\leqCu}{\preceq_{\scriptscriptstyle {\mathcal C}\hspace{-.75pt}u}\!}
\newcommand{\geqCu}{\succeq_{\scriptscriptstyle {\mathcal C}\hspace{-.75pt}u}\!}
\newcommand{\pos}{{_{+}}}
\newcommand{\cp}{\sim_{\scriptscriptstyle {cp}\!}}
\newcommand{\ce}{\sim_{\scriptscriptstyle {\mathcal C}\hspace{-.75pt}u}\!}
\newcommand{\Id}{\mbox{Id}}
\newcommand{\blackadar}{\sim_{s}} 
\newcommand{\cpoz}{orthogonality preserving completely positive  map }
\newcommand{\cpozs}{orthogonality preserving completely positive  maps }
\newcommand{\Ped}{\ensuremath{K_0^+}}
\newcommand{\seminorm}{faithful finite invariant convex  functional } 
\newcommand{\seminorms}{faithful finite invariant convex functionals }
\newcommand{\compact}{\ensuremath{\mathcal K}}
\newcommand{\zbl}[1]{}
\newcommand{\mrev}[1]{} 
\begin{document}

\title[The tensor product of Cuntz semigroups]{Traces and Pedersen
ideals of tensor products of nonunital C*-algebras,}
\author{Cristian Ivanescu}
\author{Dan Ku\v cerovsk\'y}
\address[1]{Department of Mathematics and Statistics, Grant MacEwan University, Edmonton, Alberta, T5J 4S2, e-mail:  ivanescu@macewan.ca}
\address[2]{Department of Mathematics and Statistics, University of New Brunswick, Fredericton, New Brunswick, E3B 5A3, e-mail: dkucerov@unb.ca}

\subjclass[2010]{Primary 46A32, 46L06, 	Secondary	47L07, 47L50} 

\commby{}

\begin{abstract} We show that positive elements of a Pedersen ideal
of a tensor product can be approximated in a particularly strong
sense by sums of tensor products of positive elements. This has a
range of applications to the structure of tracial cones and related
topics, such as the Cuntz-Pedersen space or the Cuntz semigroup.
For example, we determine the cone of lower semicontinuous traces
of a tensor product in terms of the traces of the tensor factors, in
an arbitrary C*-tensor norm. We show that the positive elements
of a Pedersen ideal are sometimes stable under Cuntz equivalence.
We generalize a result of Pedersen's by showing that certain classes
of completely positive maps take a Pedersen ideal into a Pedersen
ideal. We provide theorems that in many cases compute the Cuntz
semigroup of a tensor product.
\end{abstract}

\maketitle

{\it Key words and phrases.} C$^*$-algebra, tensor product.

\section{Introduction}

A unital $C^*$-algebra is a noncommutative generalization of the algebra $C(X)$ of continuous functions on a compact topological space; in the nonunital case, it is a generalization of the algebra $C_0 (X)$ of continuous functions that vanish at infinity, or more accurately, functions that are each arbitrarily small outside a sufficient large compact set.  
This paper studies the structure and applications of the Pedersen ideals of tensor products of
nonunital $C^*$-algebras. The Pedersen ideal of a nonunital $C^*$-algebra is a noncommutative analogue of the
space of compactly supported functions in the space of continuous complex-valued functions on a locally compact
Hausdorff space. One would therefore expect the Pedersen ideal of the tensor product of two $C^*$-algebras to
“multiply” in an obvious sense. However, while this might be expected, the situation is complicated by the
fact that there is, in general, more than one $C^*$-norm that one can put on the tensor product of $C^*$-algebras.
Our theorem \ref{prop:pedtensor} shows that the Pedersen ideals nevertheless multiply in a very strong sense.
 Since there is more than one possible norm on the tensor product in this so-called non-nuclear case, it would be further  expected that the topology that Pedersen found is natural for Pedersen ideals  behaves in a complicated way under tensor products.  However, we find that there are at least some quite good properties under approximation by elements of the Pedersen ideals of the tensor factors, even without assuming nuclearity. More specifically,
we show a technical property of the Pedersen ideal of a tensor product;  that positive elements of the Pedersen ideal can be approximated by elements from the Pedersen ideals of the tensor factors that have a particularly strong form of positivity. See Corollary  \ref{cor:ped.approx}. We investigate applications to maps of Pedersen ideals (Corollary \ref{cor:slice.maps} and Proposition \ref{prop:tracial.slices}) and to the Cuntz semigroup, using the theory of (usually densely finite) \lsc traces on C*-algebras. We determine the cone of \lsc traces on a tensor product in terms of the tracial cones of the tensor factor, see corollary \ref{cor:TChomeo} and theorem \ref{th:LTC}

Pedersen introduced the minimal dense ideal that bears his name \cite[th. 1.3]{Pedersen1966}, and found numerous properties of it in a series of papers \cite{Pedersen1966,Pedersen1968,Pedersen1969}. In particular, the Pedersen ideal provides a common core for the domain of the densely finite \lsc traces. Cuntz and Pedersen \cite{CP} found a real Banach space, which we refer to as the Cuntz-Pedersen space, upon which traces are represented as positive linear functionals.
See corollary \ref{cor:CP1} for our main result on Cuntz-Pedersen spaces and  tensor products.

Effros introduced  a topology for the faces of the cone of \lsc traces \cite{Effros}. Perdrizet \cite{Perdrizet1970} and Davis \cite{Davies1969} separately studied the cone of densely finite \lsc traces, with the weak topology coming from the Pedersen ideal. They showed the cone is closed; they also showed that it is well-capped in the separable case (implying a theorem of Krein-Milman type);  and they studied properties of the faces. Elliott, Robert, and Santiago \cite{ERS} compactified the cone of densely defined \lsc traces, within a larger cone of  traces, and investigated applications to the Cuntz semigroup. In general, the larger cone of all traces on a C*-algebra is not cancellative, implying that it doesn't embed as bounded elements into any vector space. This apparent technicality has subtle consequences when one tries to study such traces. We will focus here on densely finite  \lsc traces. This is because of the aforementioned fundamental relationship between Pedersen ideals and such traces: namely, that the Pedersen ideal provides a common core for all such linear functionals, and  densely finite \lsc traces become continuous linear functionals when restricted to the Pedersen ideal.

The Cuntz semigroup has become a standard technical tool for the study of C*-algebras. The Cuntz semigroup $\Cu(A)$ can be defined to be an ordered semigroup with elements given by the positive elements of $A\tensor\compact,$ modulo an equivalence relation \cite{ORT}. The projection-class elements  are those elements that are equivalent to projections of $A\tensor\compact.$
The projection-class elements of the Cuntz semigroup of a tensor product C*-algebra are in good cases determined by the classical K\" unneth sequence \cite{Schochet1982} from $K$-theory. The remaining elements are called the purely positive elements, and it seems interesting to determine how the purely positive elements  are related to the Cuntz semigroups of the tensor factors. See corollary \ref{cor:useAPT} and theorem \ref{th:ideals}. In good cases, the subsemigroup of purely positive elements is in an appropriate sense a tensor product of the subsemigroups of purely positive elements of the tensor factors. Since the projection-class elements are already well understood (see Appendix A) this means that in good cases we can compute the Cuntz semigroup of a tensor product. We do not assume nuclearity.

In section 2, we study finite traces. In section 3 we consider the Pedersen ideal of a tensor product and give some applications. In section 4, we consider Cuntz semigroups and Cuntz-Pedersen spaces.

In the final section, we consider traces which are densely finite  on closed ideals of a given C*-algebra. There are two appendices. Appendix A is on the K\" unneth sequence for projection-class elements, and Appendix B contains some suggestions for extending the first steps of Grothendieck's  Memoire \cite{grot} to the case of Cuntz semigroups.

Namioka and Phelps \cite{NamiokaPhelps} define several different tensor products for compact convex sets. The different constructions coincide in that the tensor product of extreme points is extreme in the product. It may be more informative to make use of  tensor products of dual objects instead.  Thus, in the early paper \cite{arxiv}, we defined a tensor product of Cuntz semigroups by first representing them as a class of affine functions on the tracial simplexes and then declaring that the tensor product is given by a similar class of bi-affine functions on the two simplexes. See \cite[top of p. 475]{NamiokaPhelps} \cite[p. 304]{Semadeni} for a similar approach to (projective) tensor products. In the current paper we prove first some results on tensor products of Pedersen ideals  (theorem \ref{prop:pedtensor}), deduce a dual result on the tensor product of tracial cones (theorem \ref{th:LTC}), and then by using a different flavour of duality, deduce a result on the tensor product of Cuntz-Pedersen spaces (corollary \ref{cor:CP1}).  We consider tensor products of Cuntz semigroups (corollaries \ref{cor:useAPT} and \ref{th:ideals}, see also the appendices). 

\section{Commuting representations and tensor products}

Suppose that we are considering a tensor product $C:=A_1 \otimes_\beta A_2$ with respect to some given tensor product C*-norm $\beta.$
Let $\pi$ be the universal representation of $C$ on a Hilbert space $\Hb.$
Guichardet, as part of his study of tensor product norms \cite{Guichardet1965,Guichardet1969},  shows that $\pi$ has associated with it a canonical pair of commuting representations $\pi_1\colon A_1 \arrow \L(\Hb)$ and $\pi_2\colon A_2 \arrow \L(\Hb)$ such that $\pi(a\algtensor b)=\pi_1(a)\pi_2(b)$ for all $a\in A_1,$ $b\in A_2.$

For a $C^*$-algebra $A$ with tracial state space $T(A)$, let $\partial_e T(A)$ be the set of extreme points of the convex set $T(A)$. The tensor product of linear functionals gives a tensor product of $T(A)$ and $T(B),$ denoted $T(A)\tensor T(B).$ See \cite[Appendix T]{WeggeOlsen} for more information on the tensor product of linear functionals.  The natural map from $T(A)\tensor T(B)$ to $T(A\tensor_\beta B)$ will be denoted $\mathcal T.$ The following result is known, see \cite[pg. 49]{Guichardet1969}, but we  give a more streamlined proof, which was kindly pointed out to us by Rob Archbold:

\begin{theorem}Let $C:=\AbA$ be a $C^*$-tensor product of C*-algebras $A_1$ and $A_2$, and suppose that $\tau$ is a tracial state of $C.$ The following conditions are equivalent.
\begin{enumerate}\item $\tau \in \partial_e T(C)$.
\item There exist unique $\tau_i\in \partial_e T(A_i)$ ($i=1,2$) such
that $\tau=\tau_1\betatensor\tau_2.$\end{enumerate}   \label{th:TracialStates}\end{theorem}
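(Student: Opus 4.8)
The plan is to work in the GNS picture and to invoke the standard fact that a tracial state of a C*-algebra is an extreme point of the tracial state space precisely when its GNS representation generates a factor von Neumann algebra. Write $C := \AbA$. I would first dispose of $(2)\Rightarrow(1)$ as follows: a product functional extends through the canonical surjection $C \arrow A_1 \mintensor A_2$ and the tensor product of GNS representations, so $\tau_1 \betatensor \tau_2$ is a well-defined tracial state of $C$ for every C*-norm $\beta$, and its GNS representation factors through $C \arrow A_1 \mintensor A_2$ as $\pi_{\tau_1} \tensor \pi_{\tau_2}$ on $H_{\tau_1} \tensor H_{\tau_2}$ with cyclic vector $\xi_{\tau_1} \tensor \xi_{\tau_2}$; hence it generates $\pi_{\tau_1}(A_1)'' \,\overline{\tensor}\, \pi_{\tau_2}(A_2)''$. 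When $\tau_1, \tau_2$ are extreme these are finite factors (factorial by the quoted fact, finite since each carries a faithful normal trace), so the centre of their von Neumann tensor product is $Z(\pi_{\tau_1}(A_1)'') \,\overline{\tensor}\, Z(\pi_{\tau_2}(A_2)'') = \C 1$, and $\tau_1 \betatensor \tau_2$ is factorial, hence extreme.

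For the main implication $(1)\Rightarrow(2)$, I would take the GNS data $(\pi_\tau, H_\tau, \xi)$ of $\tau$ and set $M := \pi_\tau(C)''$; since $\tau$ is extreme, $M$ is a finite factor carrying a faithful normal tracial state $\bar\tau$ extending $\tau$. Realize $\pi_\tau$ on the cyclic subspace $K := \closure{\pi(C)\eta} \subseteq \Hb$, where $\pi$ is the universal representation of $C$ and $\eta$ a vector implementing $\tau$. Since $\pi(a \algtensor b) = \pi_1(a)\pi_2(b)$, we get $\pi_1(A_1)\,\pi(A_1 \algtensor A_2)\,\eta \subseteq \pi(A_1 \algtensor A_2)\,\eta$ and likewise for $\pi_2$, so $K$ reduces Guichardet's commuting representations $\pi_1, \pi_2$; restricting them yields commuting representations $\rho_i \colon A_i \arrow \L(H_\tau)$ with $\pi_\tau(a \algtensor b) = \rho_1(a)\rho_2(b)$, and one checks (via an approximate unit) that $M_i := \rho_i(A_i)'' \subseteq M$ and $M = M_1 \vee M_2$. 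Set $\tau_i := \bar\tau \compose \rho_i$; an approximate-unit argument together with normality of $\bar\tau$ shows $\tau_i$ is a tracial state, and the usual GNS identification gives $\pi_{\tau_i}(A_i)'' \isom M_i$. As $M_1, M_2$ commute, $Z(M_1) \subseteq M_1' \cap M_2' \cap M = M' \cap M = Z(M) = \C 1$ (and symmetrically for $M_2$), so each $M_i$ is a factor and each $\tau_i$ is an extreme tracial state.

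It then remains to identify $\tau$ with $\tau_1 \betatensor \tau_2$, which, by density of $A_1 \algtensor A_2$ in $C$, comes down to $\bar\tau(xy) = \bar\tau(x)\bar\tau(y)$ for $x \in M_1$ and $y \in M_2$. Let $E_1 \colon M \arrow M_1$ be the $\bar\tau$-preserving conditional expectation. For $y \in M_2$ and $x' \in M_1$ we have $x' E_1(y) = E_1(x'y) = E_1(yx') = E_1(y) x'$ by $M_1$-bimodularity of $E_1$ and $x'y = yx'$, so $E_1(y) \in Z(M_1)$; combined with $E_1(y) \in M_1 \subseteq M_2' \cap M$ this puts $E_1(y) \in M' \cap M = \C 1$, whence $E_1(y) = \bar\tau(y)\,1$ and $\bar\tau(xy) = \bar\tau\big(x E_1(y)\big) = \bar\tau(x)\bar\tau(y)$. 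Uniqueness is then immediate: $\tau_1 \betatensor \tau_2 = \tau_1' \betatensor \tau_2'$ gives $\tau_1(a)\tau_2(b) = \tau_1'(a)\tau_2'(b)$ on elementary tensors, and choosing $b$ with $\tau_2(b) \ne 0$ (which forces $\tau_2'(b)\ne 0$) yields $\tau_1 = \lambda\tau_1'$ with $\lambda = 1$ since both are states, hence $\tau_1 = \tau_1'$ and then $\tau_2 = \tau_2'$.

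The step I expect to demand the most care is the construction of the $\rho_i$: in the non-nuclear case there is no slice map $C \arrow A_i$ through which to push $\tau$, so one genuinely needs Guichardet's commuting representations and the verification that the GNS cyclic subspace reduces them. The other point worth underlining is conceptual rather than computational: it is the relative-commutant identity $E_1(M_2) \subseteq Z(M_1)$, and not any a priori independence of $M_1$ and $M_2$, that combines with the factoriality of $M$ to force the product decomposition; the background equivalence ``extreme tracial state $\Leftrightarrow$ factorial GNS representation'' is quoted.
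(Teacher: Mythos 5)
Your argument is correct, but it reaches the theorem by a genuinely different route from the paper, especially for $(1)\Rightarrow(2)$. The paper stays at the C*-level: it quotes Dixmier 6.8.6 (extreme tracial state $=$ factorial state of finite type), invokes Guichardet's result that restricting $\tau$ along the canonical commuting representations $\pi_1,\pi_2$ preserves factoriality, and then gets the product formula by a domination argument --- for fixed positive $b$ of norm at most one, $a\mapsto\tau(a\otimes b)$ is a tracial functional dominated by $\tau_1$, hence a scalar multiple of it by extremality; uniqueness is settled by separating with an approximate unit. You instead work inside the finite factor $M=\pi_\tau(C)''$: you cut the universal representation down to the GNS cyclic subspace to obtain commuting representations $\rho_i$ with $M=M_1\vee M_2$, deduce factoriality of each $M_i$ from $Z(M_1)\subseteq M_1'\cap M_2'\cap M=Z(M)$, and obtain the factorization from the $\bar\tau$-preserving conditional expectation via $E_1(M_2)\subseteq M'\cap M=\mathbb{C}1$. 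This replaces both Guichardet citations and the extremality/domination step with standard finite von Neumann algebra technology (trace-preserving expectations, centers of tensor products), at the cost of being longer; the paper's proof is shorter precisely because it outsources those points --- indeed your $(2)\Rightarrow(1)$, which computes the center of $\pi_{\tau_1}(A_1)''\,\overline{\otimes}\,\pi_{\tau_2}(A_2)''$, is exactly the content the paper quotes from Guichardet (tensor products of factorial states are factorial). Two small points deserve a line if written up: the identification $\pi_{\tau_i}(A_i)''\cong M_i$ is, strictly, with the induced algebra $M_i p$ where $p$ is the projection onto $\overline{\rho_i(A_i)\xi}$ and $p\in M_i'$; this is isomorphic to $M_i$ (and in any case is a factor) only after you know $M_i$ is a factor, so the order of reasoning should be: compression first, factoriality of $M_i$, then extremality of $\tau_i$. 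Also, in $(2)\Rightarrow(1)$ you should say explicitly that $\tau_1\betatensor\tau_2$ on $C$ is the pullback of the min-tensor product state under the canonical quotient $C\rightarrow A_1\otimes_{\min}A_2$, which you do use and which matches the paper's meaning of the notation.
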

\begin{proof}

(1) $\Rightarrow$ (2) If $\tau$ is extremal, then it is factorial by \cite[cor. 6.8.6]{Dixmier}. Guichardet \cite[pg. 26-27]{Guichardet1969}  
shows that the restriction(s) by the above map(s) $\pi_i$ preserves factoriality. Thus, $\tau_1:=\tau\circ\pi_1\colon A\rightarrow \mathbb C$ is a positive extremal tracial functional, of norm not greater than 1.
From Guichardet's definitions, it follows that $\tau_1=\lim_{\lambda\in\Lambda}\tau(a\otimes b_\lambda)$ where $b_\lambda$ is the universal increasing approximate unit of $B,$ consisting of all positive elements of $B$ that have norm no greater than one. It follows from the fact that $\tau$ is a state, that the limit of $\tau_1,$ with respect to an approximate unit, is 1.  
Thus, for positive $b$ in $B$ of norm no greater than 1, we note that for all positive $a$ we have $\tau(a \otimes b)\leq \tau_1 (a).$  Thus, for each such $b$ there exists a non-negative scalar $t_2,$ such that   $\tau(a \otimes b)= t_2 \tau_1 (a).$ Clearly $t_2$ depends on $b,$ say $t_2=\tau_2 (b).$  
Considering an approximate unit for $A,$ we see that $\tau_2(b)=\lim_{\lambda\in\Lambda}\tau(a_\lambda \otimes b).$ Thus, $\tau_2$ is the restriction of $\tau$ by $\pi_2,$ and as before is extremal.
Thus, $\tau=\tau_1\otimes \tau_2 $ is a tensor product of extremal tracial states.

Thus there exists at least one factorization  $\tau=\tau_1\tensor\tau_2$ of the required form. If there were a different such factorization, say $\tau=\tau'_1\tensor\tau'_2,$ then for at least one value of $i,$ there would exist elements of $A_i^+$ that separate $\tau_i$ from $\tau'_i.$ But then by tensoring these elements by an element of an approximate unit of the other tensor factor, we could separate  $\tau_1\tensor\tau_2$ from $\tau'_1\tensor\tau'_2$, giving a contradiction.

(2) $\Rightarrow$ (1). Suppose that $\tau=\tau_1\tensor\tau_2$ with $\tau_i\in \partial_e T(A_i)$ ($i=1,2$). By \cite[6.8.6]{Dixmier}, $\tau_i$ is a factorial state of finite type ($i=1,2$). Hence $\tau_1\tensor\tau_2$ is a factorial state \cite[p. 25]{Guichardet1969}, necessarily of finite type since the state is tracial. By \cite[6.8.6]{Dixmier}, $\tau$ is extreme.
\end{proof}

Denote by $TC$ the cone of finite traces of a C*-algebra.

\begin{corollary} Let $C:=\AbA$ be a C*-tensor product of C*-algebras $A_1$ and $A_2.$ Then $\T$ is a homeomorphism from $TC(A_1)\tensor TC(A_2)$ onto $TC(C).$  \label{cor:TChomeo}\end{corollary}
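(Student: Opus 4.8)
The plan is to read off the statement from Theorem~\ref{th:TracialStates} by a Choquet-type argument, after reducing the cones to their bases. First note that a nonzero finite positive trace on a C*-algebra is a positive multiple of a tracial state, so $TC(A)=\mathbb R_{\geq 0}\cdot T(A)$ as cones, and the same holds for $C$ and, by construction, for $TC(A_1)\tensor TC(A_2)=\mathbb R_{\geq 0}\cdot\bigl(T(A_1)\tensor T(A_2)\bigr)$; since $\T$ is positively homogeneous it suffices to prove that $\T$ restricts to an affine homeomorphism of $T(A_1)\tensor T(A_2)$ onto $T(C)$. Recall that each product state $\tau_1\betatensor\tau_2$ is genuinely a tracial state of $C$ --- traciality is checked on elementary tensors and extends by density, and $\mintensor\leq\beta$ forces it to have norm $1$ --- so $T(A_1)\tensor T(A_2)$ is (the weak-$*$ closure of) the convex hull of these product states inside $T(C)$, and $\T$ is, up to this identification, the inclusion map; in particular it is injective, and it is bicontinuous on norm-bounded sets because $A_1\algtensor A_2$ is dense in $C$. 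The whole content is therefore that $\T$ is \emph{onto} $T(C)$.

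For surjectivity I would use Theorem~\ref{th:TracialStates}, which says precisely that $\partial_e T(C)=\{\tau_1\betatensor\tau_2:\tau_i\in\partial_e T(A_i)\}$, i.e.\ every extreme tracial state of $C$ is a product state. When $A_1,A_2$ are unital, $T(C)$ is a compact convex set, so by Krein--Milman every $\tau\in T(C)$ lies in the closed convex hull of $\partial_e T(C)$, hence in $T(A_1)\tensor T(A_2)$; thus $\T$ is a continuous affine bijection between compact convex sets, hence an affine homeomorphism, and the cone statement follows by homogeneity. For the general case I would embed $T(C)$ into the tracial state space $T(\widetilde C)$ of the unitization $\widetilde C=C\oplus\mathbb C$: then $T(C)$ is a weak-$*$ \emph{closed face} of the compact Choquet simplex $T(\widetilde C)$, being cut out by the lower-semicontinuous condition $\sup_\lambda\tau(e_\lambda)\geq 1$ for an approximate unit $(e_\lambda)$ of $C$. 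Since closed faces absorb the representing measures of their points, and extreme points of a face are extreme in the ambient set, the maximal representing measure of a given $\tau\in T(C)$ in $T(\widetilde C)$ is concentrated on $\partial_e T(\widetilde C)\cap T(C)=\partial_e T(C)$, which by Theorem~\ref{th:TracialStates} consists of product states; approximating the barycentre by finite convex combinations then exhibits $\tau$ as a limit of convex combinations of product states, so $\tau\in T(A_1)\tensor T(A_2)$.

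The main obstacle, I expect, is not any single hard step but the care needed with the ambient topologies and with the nonunital bookkeeping: one must be precise about which weak topology $T(A_1)\tensor T(A_2)$ and $TC(C)$ carry (the assertion that $\T$ is a \emph{homeomorphism}, rather than merely a bijection, on the \emph{cones} rests on these being the weak-$*$ topology inherited from $C$, since on a bounded slice that topology agrees with the one from $A_1\algtensor A_2$ by density but on the unbounded cone the two can genuinely differ), and one must justify that $T(C)$ is indeed a closed face of $T(\widetilde C)$ with the stated extreme boundary. A secondary, purely technical, point is the meaning of ``the maximal measure is concentrated on $\partial_e$'' when the algebras are not separable, which is handled by the Bishop--de~Leeuw theorem or by a standard reduction to separable C*-subalgebras, the latter being compatible with the description $\partial_e T(C)=\{\tau_1\betatensor\tau_2\}$. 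Finally, while the normalization $TC(A)=\mathbb R_{\geq 0}\cdot T(A)$ and the passage between a cone and its base are routine, they should be spelled out because $TC$ is not locally compact.
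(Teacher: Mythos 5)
Your overall strategy is workable and shares its skeleton with the paper's proof --- both rest on Theorem \ref{th:TracialStates} together with a Krein--Milman/Choquet argument and weak-$*$ compactness of the norm-bounded part of the tracial cone --- but you route the nonunital case through the unitization and representing measures, whereas the paper stays at the level of cones: it uses that the unit ball is a cap for $TC(C)$ (compact by \cite[prop. 6.8.7]{Dixmier}) together with the Krein--Milman-type theorem for well-capped cones \cite{Asimov1969} to write $TC(C)$ as the closed convex hull of its extreme rays, and then obtains closedness of the range and the homeomorphism property by restricting to multiples of the unit ball and scaling. Note that your reduction of the cone statement to the bases ``by homogeneity'' faces exactly the bookkeeping problem you flag: in the nonunital case the normalization $\sigma\mapsto\sigma/\|\sigma\|$ is not weak-$*$ continuous and weak-$*$ convergent nets in the cone need not be norm-bounded, so at the cone level you still need the paper's ball-restriction-and-scaling device rather than homogeneity alone.

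The one assertion that is actually false as stated is that $T(C)$ is a weak-$*$ \emph{closed} face of $T(\widetilde C)$. It is a face (the norm of a positive functional is additive over convex combinations), but it is not closed: for $C=C_0(\mathbb R)$ the point evaluations $\delta_n\in T(C)$ converge in $T(\widetilde C)$ to evaluation at infinity, which annihilates $C$. Your own description signals the problem: the set is cut out by $\sup_\lambda\tau(e_\lambda)\geq 1$ with $\tau\mapsto\sup_\lambda\tau(e_\lambda)$ lower semicontinuous, and a condition $\{g\geq 1\}$ with $g$ \lsc gives a $G_\delta$, not a closed set (closedness would require upper semicontinuity). So ``closed faces absorb the representing measures of their points'' cannot be invoked. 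The conclusion you need does survive by a direct argument: if $\mu$ is a Radon probability measure on $T(\widetilde C)$ with barycentre (the canonical extension of) $\tau\in T(C)$, then, since $(\sigma\mapsto\sigma(e_\lambda))_\lambda$ is an increasing net of continuous affine functions and $\mu$ is Radon, $\int\sup_\lambda\sigma(e_\lambda)\,d\mu(\sigma)=\sup_\lambda\int\sigma(e_\lambda)\,d\mu(\sigma)=\|\tau\|=1$; as the integrand is $\leq 1$, it equals $1$ $\mu$-almost everywhere, i.e.\ $\mu$ gives full mass to the Borel set corresponding to $T(C)$ (the face property, which does hold, still gives $\partial_e T(\widetilde C)\cap T(C)=\partial_e T(C)$). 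With that repair, your maximal-measure/Bishop--de~Leeuw step and the separable reduction go through, and, together with the ball-restriction argument for the cone-level homeomorphism, your proof becomes correct; what the paper's capped-cone route buys is precisely that it avoids the unitization and these measure-theoretic delicacies.
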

\begin{proof} The map $\T$ is injective on algebraic elements,\footnote{see for example Corollary 1.3.5 in \cite{DNR}.}
and the domain has the induced norm $\|\T(\cdot)\|;$ in other words, the map is injective by construction. 

 The cone of finite traces on a C*-algebra is the weak* closure of the convex hull of its extremal rays and extremal points. This is because the unit ball provides a cap, in Choquet's sense \cite{Choquet}, for the cone of finite traces, and then there is a theorem of Krein-Milman type, see Theorem 2.2 in \cite{Asimov1969}. Theorem \ref{th:TracialStates} implies that the range of the map $\T\colon TC(A_1)\tensor TC(A_2)\arrow TC(C)$ contains the extreme points of the range.  A standard scaling argument and the fact that the tracial cone intersected with the closed unit ball is  weak* compact \cite[prop. 6.8.7]{Dixmier}, shows that the range of the map $\T$ is closed, and hence that $\T$ is surjective.  In fact, a similar argument shows that by restricting the domain to the compact Hausdorff space given by intersecting $TC(A_1)\tensor TC(A_2)$ with a multiple of the unit ball, the restricted map $\T$ becomes a homeomorphism onto its range. Thus, again by a scaling argument, $\T$ is a homeomorphism.
\end{proof}

\section{Pedersen ideal of a tensor product }

Pedersen's ideal was originally defined as a minimal dense ideal, and is a common core for the \lsc densely finite traces. Pedersen uncovered various properties and characterizations of this ideal in a remarkable series of papers \cite{Pedersen1966,Pedersen1968,Pedersen1968b,Pedersen1969}. We recall that Pedersen's ideal, denoted by him $K(A),$ is  generated in a suitable sense \cite[p.133]{Pedersen1966}  by  $\Ped (A):=\{a\in A^{+} : \exists b\in A^+, \, [a]\leq b\}$, where $[a]$ denotes the range projection of $a$ in $A^{**}.$ We should mention that  the  set $\Ped(A)$ is generally not the same as the positive cone $K(A)\pos$ of $K(A).$ 
Recall that  Pedersen ideals are \cite[Theorem 2.1]{Pedersen1968b} locally convex topological vector spaces with a topology denoted $\tau.$ There also exists a weak topology. The underlying $\tau$-topology is stronger than the norm topology coming from the ambient C*-algebra. Thus density with respect to the $\tau$-topology, as provided by the following $\tau$-density theorem, is a very strong property.  

We denote by $\algtensor$ the algebraic tensor product. Then 
\begin{theorem}[$\tau$-density]Let $A_1$ and $A_2$ be C*-algebras. Then \begin{enumerate}\item  $\Ped(\AbA)\supseteq \Ped(A_1)\tensor\Ped(A_2),$\item $K(\AbA)\supseteq K(A_1)\tensor K(A_2),$ 
\item   $K(A_1)\algtensor K(A_2)$ is $\tau$-dense in $K(\AbA),$ and
\item  $K(A_1)_{+}\algtensor K(A_2)_{+}$ is $\tau$-dense in $K(\AbA)_{+},$\end{enumerate}
\label{prop:pedtensor}\end{theorem}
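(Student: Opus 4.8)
The plan is to establish (1) and (2) by elementary means, and then to use them together with Pedersen's structure theory for minimal dense ideals to reach the $\tau$-density statements (3) and (4); the real work is entirely in the latter.

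\emph{Parts (1) and (2).} I would begin from the local-unit description of the generating set: for a C*-algebra $A$, a positive $a\in A$ lies in $\Ped(A)$ if and only if there is a positive $e\in A$ with $ea=ae=a$ (given $[a]\le b$ one checks $e=\min(b,1)$ works, using $[a]\le\min(b,1)\le 1$ and $[a]\bigl(1-\min(b,1)\bigr)[a]=0$; conversely a positive local unit $e$ satisfies $e[a]=[a]$, hence $[a]\le e$). If $a_i\in\Ped(A_i)$ with positive local units $e_i\in A_i$, then $e_1\otimes e_2$ is a positive local unit for $a_1\otimes a_2$ in $\AbA$, so $a_1\otimes a_2\in\Ped(\AbA)$; this is (1). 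Since $K(A_i)$ is the linear span of $\Ped(A_i)$ and $K(\AbA)$ is a linear subspace of $\AbA$, (2) is immediate from (1).

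\emph{Part (3).} The $\tau$-closure of the subspace $K(A_1)\algtensor K(A_2)$ is again a subspace, and $K(\AbA)$ is the linear span of $\Ped(\AbA)$, so it suffices to show every positive $c\in\Ped(\AbA)$ lies in this $\tau$-closure. For the approximants I would use that each $\Ped(A_i)$ contains an approximate unit $(u^{(i)}_\lambda)$ for $A_i$, so that $(u^{(1)}_\lambda\otimes u^{(2)}_\mu)$ is an approximate unit for $\AbA$ whose entries lie in $K(A_1)\algtensor K(A_2)$ and, by (1), in $\Ped(\AbA)$. Since $K(A_1)\algtensor K(A_2)$ is norm-dense in $\AbA$, choose $\xi\in K(A_1)\algtensor K(A_2)$ norm-close to $c$; then the sandwich $(u^{(1)}_\lambda\otimes u^{(2)}_\mu)\,\xi\,(u^{(1)}_\lambda\otimes u^{(2)}_\mu)$ again lies in the $*$-algebra $K(A_1)\algtensor K(A_2)$ and is norm-close to $(u^{(1)}_\lambda\otimes u^{(2)}_\mu)\,c\,(u^{(1)}_\lambda\otimes u^{(2)}_\mu)\to c$, yielding norm-approximants to $c$ inside $K(A_1)\algtensor K(A_2)$. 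The decisive point — and the main obstacle — is that the $\tau$-topology on $K(\AbA)$ is strictly finer than the norm topology, so this is not yet a $\tau$-approximation: one must invoke Pedersen's description of the topological structure of $K(\AbA)$ to see how, near $c$, the $\tau$-neighbourhood filter is controlled by norms on hereditary corners, and then arrange the sandwiched approximants to respect those corners, using crucially that the sandwiching elements $u^{(1)}_\lambda\otimes u^{(2)}_\mu$ come from the Pedersen ideals of the factors. I expect this reconciliation of the inductive-limit structure of $K(\AbA)$ with the tensor decomposition to be the genuinely delicate step; note that the cruder idea of absorbing $c$ into a single corner $\overline{(b_1\otimes b_2)(\AbA)(b_1\otimes b_2)}$ with $b_i\in\Ped(A_i)$ fails for general $c$ (for instance with $A_1,A_2=\compact$ and $c$ a rank-one operator given by an infinitely-entangled vector), so the approximate unit must be exploited dynamically rather than frozen into one corner.

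\emph{Part (4).} This follows from the same scheme applied to a square root. Given a positive $c\in\Ped(\AbA)$, $\tau$-approximate $c^{1/2}$ by elements $w_n\in K(A_1)\algtensor K(A_2)$ exactly as in (3); since $K(A_1)\algtensor K(A_2)$ is a $*$-algebra, $w_n^{*}w_n\in(K(A_1)\algtensor K(A_2))_{+}$ and $w_n^{*}w_n\to c$, and one checks the convergence persists in $\tau$ by the same corner analysis. A density reduction analogous to the one used for (3) — replacing a positive element of $K(\AbA)$ by a sum of elements of $\Ped(\AbA)_{+}$, and using that the $\tau$-closure of $(K(A_1)\algtensor K(A_2))_{+}$ is closed under addition — then upgrades this from generators to all of $K(\AbA)_{+}$.
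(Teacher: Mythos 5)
Your parts (3) and (4) --- the actual content of the theorem --- are not proved. The sandwiching argument with the approximate unit $u^{(1)}_\lambda\otimes u^{(2)}_\mu$ only produces \emph{norm}-approximants to a positive $c\in K(\AbA)$ inside $K(A_1)\algtensor K(A_2)$, and norm density of the algebraic tensor product is essentially automatic; the whole point of the statement is density in the much finer $\tau$-topology. You explicitly defer the norm-to-$\tau$ upgrade to a ``genuinely delicate step,'' and the mechanism you sketch for it (the $\tau$-neighbourhood filter at $c$ being ``controlled by norms on hereditary corners'') is undercut by your own observation: the $\tau$-seminorms, i.e.\ the \seminorms, are indeed norm-bounded on a corner generated by a single element of the Pedersen ideal, but your entangled rank-one example shows $c$ and the approximants cannot be confined to a common elementary corner, and no substitute is offered. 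So the proof stops exactly where the theorem begins. The paper closes this gap by duality rather than by direct approximation: a positive $\tau$-continuous functional on $K(\AbA)\pos$ is determined by its values on $K(A_1)\pos\algtensor K(A_2)\pos$ (via its lower semicontinuous extension as a supremum of dominated bounded positive functionals, each determined on that linearly norm-dense set); Pedersen's Jordan decomposition extends this uniqueness to self-adjoint $\tau$-continuous functionals; Hahn--Banach then gives \emph{weak} density of the convex set $K(A_1)\pos\algtensor K(A_2)\pos$ in $K(\AbA)\pos$; and Mazur's theorem, since the set is convex and $\tau$ is locally convex, upgrades weak density to $\tau$-density, after which (3) follows from (4). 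Nothing in your proposal plays the role of this step, and your (4) compounds the problem: the assertion that $w_n^*w_n\to c$ ``persists in $\tau$'' appeals again to the missing corner analysis (joint $\tau$-continuity of multiplication is not something you may assume).

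There is also a genuine error in your part (1). From $[a]\le b$ it does \emph{not} follow that $[a]\le\min(b,1)$, nor that $[a]\bigl(1-\min(b,1)\bigr)[a]=0$: take $p=e_{11}$ in $M_2$ and $b=p+\varepsilon c$ with $c$ the matrix having all entries equal to $1$; then $b\ge p$, but $b$ has an eigenvalue in $(0,1)$ whose eigenvector is not orthogonal to $e_1$, so $\langle \min(b,1)e_1,e_1\rangle<1$ and $\min(b,1)\not\ge p$. The implication you need, from the range-projection definition of $\Ped(A)$ used in the statement to the existence of a positive local unit, is precisely the nontrivial comparison of two compactness conditions on the open projection $[a]$ and requires a real argument or citation (the converse direction, local unit $\Rightarrow$ membership in $\Ped$, is the easy one). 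The paper sidesteps this entirely: it proves (1) directly in the universal representation, using the commuting representations $\pi_1,\pi_2$ to get $\pi_1(a^{1/n})\pi_2(b^{1/n})\le\pi_1(a_c)\pi_2(b_c)$ and taking strong limits, so that the range projection of $a\otimes b$ is majorized by $a_c\otimes b_c$; you could adopt that argument, or repair your reduction by an appeal to the compact open projection results of the type the paper cites later, but not by the $\min(b,1)$ computation as written.
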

\begin{proof}
(1)  Let $a$ be an element, with norm not greater than one, of $\Ped(A_1),$ having range projection majorized by $a_c$ and let $b$ be an element, with norm not greater than one, of $\Ped(A_2),$ having range projection majorized by $b_c.$  Let $\pi_1$ and $\pi_2$ be the homomorphisms that embed $A_i$ in the universal representation of $\AbA.$  Since majorization of the range projection implies that $\pi_1(a^{1/n})\leq\pi_1(a_c)$ and $\pi_2(b^{1/n})\leq\pi_2(b_c),$ we apply $\pi_2(b_c)^{1/2}$ from left and right in the first inequality, then use commutativity and the second inequality, to conclude
$					\pi_1(a^{1/{n}})\pi_2(b^{1/n})\leq
					\pi_1(a_c)\pi_2(b_c).$
Taking the strong limit in $(\AbA)\dd$ as $n\arrow\infty$ shows that the range projection of $a\tensor b$ is majorized by  $\pi_1(a_c)\pi_2(b_c).$
This proves (1).  Taking linear combinations proves (2).

(4) We first show that $C\pos\pos:=K(A_1)\pos \algtensor K(A_2)\pos$ is weakly dense in $K(\AbA)\pos.$ 
A positive continuous linear functional, $f(x)$ on $\Ped(\AbA)\pos,$ extends to a \lsc function on the ambient C*-algebra, $\AbA,$ given by \cite{Pedersen1966} the pointwise supremum  
of the set of positive linear functionals $ \{ g\in {(\AbA)^{*}}\pos |\, g(x)\leq f(x), x\in K(\AbA)\pos\}.$ Each bounded positive functional in the set is determined by its values on $C\pos\pos,$ which is a linearly norm-dense subset of the C*-algebra, $\AbA.$ Thus  $f$ is determined by $\sup\{ g\in {(\AbA)^{*}}_{+} |\, g(x)\leq f(x), x\in C\pos\pos\}.$
It follows that a positive linear functional is determined by its values on $C\pos\pos.$

 We show that the same is true for self-adjoint continuous linear functionals. Supposing that a self-adjoint linear functional had two such extensions, then we would have a self-adjoint linear functional $h$ that is zero on $C\pos\pos$ but isn't zero. Pedersen's Jordan decomposition \cite[pg.127]{Pedersen1969b}, decomposes  $h$ uniquely into the difference of two positive continuous orthogonal linear functionals, $f_1$ and $f_2.$ It was shown in the previous paragraph that each continuous  positive linear functional is determined by its values on  $C\pos\pos.$ Since by, hypothesis, $f_1$ and $f_2$ become equal when restricted to $C\pos\pos,$ it follows that actually they are equal everywhere, and therefore the given self-adjoint linear functional $h$ must be zero everywhere.
 Thus, self-adjoint continuous linear functionals in fact extend uniquely from $C\pos\pos.$   
Then, by Hahn-Banach,  the convex set $C\pos\pos$ is weakly dense in  $K(\AbA)\pos.$ The celebrated Mazur theorem for locally convex topological vector spaces \cite[Cor. 3.46]{FHHMZ} implies density with respect to the topology $\tau.$

To prove (3),  if there exists some element $x$ in $K(\AbA)$ that is not a Moore-Smith limit of elements of $C,$  the usual decomposition of an element into a linear combination of four positive elements, which still holds in a Pedersen ideal, shows that there is a positive element  $x$ in $K(\AbA)\pos$ that is not a Moore-Smith limit of elements of $C.$ But this contradicts (4).
\end{proof}

The above key result will be very useful. The first application will be to computing the cone of \lsc traces of a tensor product. 

Before proceeding with this, there is still one more result that can be drawn from the same well as   theorem \ref{prop:pedtensor} above. Simple tensors of positive elements are sometimes called super-positive elements, see \cite[p.5]{Ando2004}. The next corollary provides approximations in the $\tau$-topology by such sums. It is perhaps unexpected that we can approximate from below by such elements.
\begin{corollary}Let $x$ be a nonzero positive element of the Pedersen ideal $K(\AbB).$ We can approximate $x$ from below, in the $\tau$-topology, by finite sums $\sum a_i\tensor b_i$ where the $a_i$  are positive elements of $K(A)$ and the $b_i$ are positive elements of $K(B).$ \label{cor:ped.approx}\end{corollary}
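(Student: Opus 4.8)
The plan is to derive this from the $\tau$-density theorem (Theorem~\ref{prop:pedtensor}), in particular from part~(4), by localizing near $x$ and then compressing. Write $C:=\AbB$. Since $x\in K(C)$, there is a positive local unit $h\in K(C)$ for $x$, that is, $hx=xh=x$ with $0\le h$ and $\|h\|=1$; one obtains such an $h$ in the standard way, starting from an element of $\Ped(C)$ that dominates the range projection of $x$, applying a continuous function that vanishes at $0$ and is $1$ on the spectrum of $x$ away from $0$, and finally cutting down by $t\mapsto\min(t,1)$. Then $x=hxh\le\|x\|\,h$, so every $y$ with $0\le y\le x$ lies in the hereditary C*-subalgebra $\overline{hCh}$, and $\overline{hCh}\subseteq K(C)$ since $h\in\Ped(C)$. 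On an order-bounded subset of $K(C)$ the topology $\tau$ agrees with the norm topology \cite{Pedersen1968b}, so it suffices to produce approximants $y_\lambda$ with $y_\lambda\le x$ and $\|y_\lambda-x\|\to 0$.

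To compress, apply Theorem~\ref{prop:pedtensor}(4) to $h\in K(C)\pos$: there is a net $(v_\lambda)$, each $v_\lambda$ a finite sum of simple tensors of positive elements of $K(A)$ and $K(B)$, with $v_\lambda\to h$ in $\tau$ and hence in norm. Replacing $v_\lambda$ by $v_\lambda/\max(1,\|v_\lambda\|)$, still a nonnegative combination of such simple tensors, we may assume $0\le v_\lambda\le 1$, and still $v_\lambda\to h$ because $\|h\|=1$. Set $y_\lambda:=x^{1/2}v_\lambda x^{1/2}$. Then $y_\lambda\le x$ because $v_\lambda\le 1$, while, using $x^{1/2}hx^{1/2}=x$ (a consequence of $hx=x$), $\|y_\lambda-x\|=\|x^{1/2}(v_\lambda-h)x^{1/2}\|\le\|x\|\,\|v_\lambda-h\|\to 0$. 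By the previous paragraph, $y_\lambda\to x$ in the $\tau$-topology, with $y_\lambda\le x$ throughout.

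What this leaves open, and what I expect to be the main obstacle, is that $y_\lambda=x^{1/2}v_\lambda x^{1/2}$ need not itself be a finite sum of simple tensors of positive elements: conjugating a super-positive element by $x^{1/2}$, which is not a simple tensor, destroys super-positivity. I would try to repair this by approximating $x^{1/2}$ as well. Using Theorem~\ref{prop:pedtensor}(3), choose self-adjoint $w_\mu\in K(A)\algtensor K(B)$ with $w_\mu\to x^{1/2}$ in $\tau$; then $w_\mu v_\lambda w_\mu$ lies in $K(A)\algtensor K(B)$, is positive, and converges to $x$, but one has only $w_\mu v_\lambda w_\mu\le\|v_\lambda\|\,w_\mu^{2}$, which dominates $x$ merely up to a vanishing error $\|w_\mu^{2}-x\|$, and forcing exact domination from below appears to require a correction of the form $x^{1/2}(x+\delta_\mu)^{-1/2}(\,\cdot\,)(x+\delta_\mu)^{-1/2}x^{1/2}$, which again leaves the algebraic tensor product. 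The robust way to finish is to re-run the Hahn--Banach and Jordan-decomposition argument behind Theorem~\ref{prop:pedtensor}(4): by Mazur's theorem it suffices to show that $x$ lies in the weak closure of the convex set of all $y\le x$ that are finite sums of simple tensors of positive elements, and since a positive functional $\phi$ satisfies $\phi(y)\le\phi(x)$ whenever $y\le x$, this reduces to the statement that for every positive $\tau$-continuous functional $\phi$ on $K(C)$ and every $\epsilon>0$ there is such a $y\le x$ with $\phi(x-y)<\epsilon$. Proving that reduced statement, from the norm-density just established together with the GNS construction for $\phi$ and operator monotonicity of the square root, is where essentially all the work of the corollary lies.
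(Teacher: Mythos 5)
Your construction never actually produces the approximants the statement asks for, and you say so yourself: after compressing, $y_\lambda=x^{1/2}v_\lambda x^{1/2}$ lies below $x$ and converges to $x$, but it is no longer a finite sum of simple tensors of positive elements, so the super-positive form is lost exactly at the step where it matters. The proposed repair is only a reduction: by Mazur/Hahn--Banach you reduce to showing that for every positive $\tau$-continuous functional $\phi$ and every $\epsilon>0$ there is a finite sum $y$ of positive simple tensors with $y\le x$ and $\phi(x-y)<\epsilon$ --- but that reduced statement is essentially the corollary itself (its weak form), and you explicitly leave it unproved. So as it stands the proposal has a genuine gap; neither the compression argument nor the functional-analytic reformulation supplies the ``from below'' approximation by elements of $K(A)_{+}\algtensor K(B)_{+}$.

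The ingredient you are missing is the one the paper's (one-line) proof rests on: Pedersen's result \cite{Pedersen1968b} that the $\tau$-topology on a Pedersen ideal is \emph{hereditarily convex}, i.e.\ it admits a base of hereditary convex neighbourhoods of zero (equivalently, its defining seminorms are monotone invariant convex functionals on the positive cone). This order-compatibility of the topology is precisely what converts plain $\tau$-density of the convex cone $K(A)_{+}\algtensor K(B)_{+}$ in $K(\AbB)_{+}$, i.e.\ Theorem~\ref{prop:pedtensor}(4), into approximation \emph{from below} by elements of that cone, with no need to force the approximants under $x$ by hand via compression. Your local-unit observation (that for $0\le z\le x$ one has $z\le\|z\|h^{2}$ with $h\in\Ped(\AbB)$ a local unit for $x$, so norm-small positive elements under $x$ are $\tau$-small) is sound and is in the same spirit, but to close the argument you must invoke the hereditary structure of the $\tau$-neighbourhoods themselves rather than re-deriving Theorem~\ref{prop:pedtensor}(4) in a stronger form.
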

\begin{proof}  Recalling that the $\tau$-topology on $K(\AbB)$ is hereditarily convex \cite[pg.67]{Pedersen1968b}, we then use Theorem \ref{prop:pedtensor}.(4) to approximate $x$ from below by elements of $K(A_1)_{+}\algtensor K(A_2)_{+}.$ \end{proof}

\subsection{On \lsc tracial cones of tensor products}
A \lsc trace on a tensor product $\AbB$ is determined by its values on the Pedersen ideal. From theorem \ref{prop:pedtensor} it follows that in fact it is sufficient, for any norm $\beta,$ to know what the values are on the elementary tensors $k_1\otimes k_2$ where the tensor factors belong to the positive part(s) of the Pedersen ideals $K(A)_+$ and $K(B)_+.$ Since after all the \lsc traces become bounded on the Pedersen ideal, it then follows from the case of bounded traces (Corollary \ref{cor:TChomeo}) that
$LTC(A)\tensor LTC(B)$ is homeomorphic to $LTC(\AbB),$ where $LTC$ denotes the cone of densely finite \lsc traces. For the reader wishing more detail, we provide lemma \ref{lem:istrace}, proposition \ref{prop:PedersenAlgebras}, and corollary \ref{cor:restriction}; then proving the claimed result in theorem \ref{th:LTC}.
\begin{lemma} The tensor product of \lsc traces on $A_1$ and $A_2$ is a \lsc trace on $C:=\AbA.$ Positive continuous linear functionals on  $K(A_1)\algtensor K(A_2)$ extend to positive continuous linear functionals on $K(\AbA)$
\label{lem:istrace}   \end{lemma}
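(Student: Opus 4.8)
The plan is to prove the two assertions of Lemma~\ref{lem:istrace} in turn, the second one being the key technical ingredient and the first one then following quickly.

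\textbf{The extension statement.} First I would treat the second sentence: a positive continuous linear functional $f$ on $K(A_1)\algtensor K(A_2)$ extends to a positive continuous linear functional on $K(\AbA)$. Here ``continuous'' must mean $\tau$-continuous, so the strategy is to exploit the $\tau$-density established in Theorem~\ref{prop:pedtensor}.(3)--(4). Since $K(A_1)_+\algtensor K(A_2)_+$ is $\tau$-dense in $K(\AbA)_+$, and since $\tau$ is a locally convex topology, one is tempted to extend $f$ simply by $\tau$-continuity. The obstacle is that a $\tau$-dense subspace need not be a topological subspace on which uniform continuity automatically propagates — one has to check that $f$ is in fact uniformly $\tau$-continuous on $K(A_1)\algtensor K(A_2)$ with respect to the subspace topology induced from $K(\AbA)$, not merely continuous for the possibly-finer intrinsic topology of the algebraic tensor product. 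I would argue this by going back to the description used in the proof of Theorem~\ref{prop:pedtensor}.(4): a $\tau$-continuous positive functional on a Pedersen ideal is, by Pedersen \cite{Pedersen1966}, the restriction of (equivalently, a pointwise supremum of) bounded positive functionals on the ambient C*-algebra. Concretely, I would show that $f$, being positive and $\tau$-continuous on the algebraic tensor product, is dominated in the relevant sense by bounded functionals on $\AbA$, use that $K(A_1)_+\algtensor K(A_2)_+$ is norm-dense in $(\AbA)_+$, take the pointwise supremum $g \mapsto \sup\{g : g\in (\AbA)^*_+,\ g\leq f \text{ on } C_{++}\}$ exactly as in the proof of \ref{prop:pedtensor}.(4), and verify the resulting \lsc function is finite and positive-linear on all of $K(\AbA)_+$, hence extends $f$. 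Self-adjoint and then general functionals follow by the same four-positive-elements decomposition and Pedersen's Jordan decomposition \cite{Pedersen1969b} used in that proof.

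\textbf{The trace statement.} For the first sentence, let $\sigma_i$ be densely finite \lsc traces on $A_i$. Their algebraic tensor product $\sigma_1\algtensor\sigma_2$ is defined on $K(A_1)\algtensor K(A_2)$ and is positive and tracial there (the trace identity on elementary tensors is the same computation as for bounded traces, using that $\pi_1,\pi_2$ have commuting ranges). It is $\tau$-continuous on the algebraic tensor product because each $\sigma_i$ is $\tau$-continuous on $K(A_i)$ and the $\tau$-topology on a Pedersen ideal behaves well under the relevant products of generators \cite{Pedersen1968b}. Applying the extension statement just proved, $\sigma_1\algtensor\sigma_2$ extends to a positive $\tau$-continuous functional $\tau$ on $K(\AbA)$; then one extends further to a \lsc function on $\AbA$ by the usual supremum-of-bounded-functionals recipe, obtaining a densely finite \lsc weight. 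It remains to check this weight is \emph{tracial}, i.e.\ $\tau(x^*x)=\tau(xx^*)$ for all $x$ in (a dense ideal of) $\AbA$. Here I would use that the trace property is a closed condition: it holds on the $\tau$-dense set $K(A_1)\algtensor K(A_2)$ by construction, and $\tau$ is $\tau$-continuous on $K(\AbA)$, so by Theorem~\ref{prop:pedtensor}.(3) the identity $\tau(u v)=\tau(vu)$ — which determines the trace property — passes to the $\tau$-closure. Equivalently, invoke Corollary~\ref{cor:TChomeo}: since the extension, when scaled and restricted, arises from a tensor product of bounded tracial functionals on the tensor factors, it lies in the image of $\mathcal T$ and is therefore a trace.

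\textbf{Main obstacle.} I expect the genuine difficulty to be the first step: establishing that a positive $\tau$-continuous functional on the \emph{algebraic} tensor product $K(A_1)\algtensor K(A_2)$ is continuous for the subspace topology it inherits from $K(\AbA)$, so that $\tau$-density actually delivers an extension. The algebraic tensor product carries its own natural Pedersen-ideal-type inductive topology, and a priori this could be strictly finer than the trace induced from the big Pedersen ideal; one needs the (non-obvious) fact, implicit in Theorem~\ref{prop:pedtensor}, that on the positive cones the two topologies agree well enough — or, more robustly, one sidesteps the issue entirely by constructing the extension directly as the \lsc envelope $\sup\{g\in(\AbA)^*_+ : g\leq f\}$ and checking finiteness on $K(\AbA)_+$ using the domination $a\otimes b\leq a_c\otimes b_c$ from the proof of \ref{prop:pedtensor}.(1). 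The remaining verifications (linearity of the envelope, the trace identity passing to the closure, densely-finiteness) are routine given the machinery already assembled in the proof of Theorem~\ref{prop:pedtensor} and in Corollary~\ref{cor:TChomeo}.
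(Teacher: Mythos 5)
Your proposal is correct and follows essentially the same route as the paper: both arguments rest on the $\tau$-density statements of Theorem~\ref{prop:pedtensor}.(3)--(4) together with an extension principle for positive $\tau$-continuous functionals (the paper cites the Hahn--Banach-type theorem \cite[Theorem 3.31]{FHHMZ} and Pedersen's lsc-envelope machinery, which is exactly the ``supremum of dominated bounded functionals'' device you propose), and both verify the trace identity on the $\tau$-dense algebraic tensor product. Your only real divergence is organizational -- you prove the extension statement first and deduce the trace statement from it, and you flag explicitly the subspace-topology continuity point that the paper disposes of by its citation of \cite{FHHMZ} -- so no essential gap.
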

\begin{proof}  We are given positive linear functionals $\tau_1$ and $\tau_2$ on $\Ped(A_1)\pos$ and  $\Ped(A_2)\pos.$ 
The linear functional $\tau_1(\cdot ) \tensor \tau_2(\cdot )$ defines a  positive linear functional $\tau$ on $\Ped(A_1)\pos\algtensor\Ped(A_2)\pos,$ but by Theorem 
\ref{prop:pedtensor}.(4) we can extend the domain to all of $\Ped(\AbA)\pos.$ The tracial property $\tau(ab)=\tau(ba)$  is straightforward to check. The second part is similar, using Theorem 
\ref{prop:pedtensor}.(3) and an extension theorem \cite[Theorem 3.31]{FHHMZ}.   
 \end{proof}

The next two known propositions will be needed. The first one is  \cite[Prop. 4]{Pedersen1968}, with a shorter proof than the original, see also \cite[Prop. 2.1]{PedPet}.

\begin{proposition} Let $a$ be an element of $K(A)$. The hereditary subalgebra generated by $a$ is contained in the Pedersen ideal.\label{prop:PedersenAlgebras}\end{proposition}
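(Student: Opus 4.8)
The plan is to reduce the claim to a statement about a single hereditary subalgebra and then exploit Pedersen's characterization of $K(A)$ as the minimal dense ideal. First I would recall that for $a\in K(A)$, the hereditary subalgebra $\mathrm{Her}(a)=\overline{aAa}$ is the norm-closure of $aAa$, and that it suffices to show $aAa\subseteq K(A)$, since $K(A)$ is itself a (norm-dense, but more importantly ideal) subspace and the key point is really that $\mathrm{Her}(a)$ consists of elements dominated by $a$ in a suitable sense. More precisely, I would show that every positive element $h\in\mathrm{Her}(a)$ lies in $\Ped(A)$, i.e. that its range projection $[h]$ in $A^{**}$ is majorized by some positive element of $A$; combined with the fact that $K(A)$ is the linear span (in the appropriate ideal-generation sense) of $\Ped(A)$, this gives $\mathrm{Her}(a)\subseteq K(A)$.

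The central step is the domination estimate. Given $a\in K(A)\pos$ (one may assume $a$ positive, after writing a general element of $K(A)$ as a linear combination of four positive elements of $K(A)$ and noting each lies in $K(A)$), there is by definition $a_c\in A\pos$ with $[a]\le a_c$. For $h\in\mathrm{Her}(a)\pos$, one has $h=\lim_n (a^{1/n} h a^{1/n})$ in norm, and for each $n$, $a^{1/n}ha^{1/n}\le\|h\|\, a^{1/n}(\cdot)a^{1/n}$-type bounds show $a^{1/n}ha^{1/n}$ has range projection below $[a]$, hence below $a_c$. The issue is passing this to the limit: the range projections need not converge nicely. The clean way around this is to use that $a_c^{1/2}$ acts as a unit on the left and right for all of $\mathrm{Her}(a)$ in $A^{**}$—indeed $[a]\le a_c$ forces $[a]=[a]\,[a_c]$, so $a_c^{1/2}$ dominates $[a]$ and therefore $[h]\le[a]$ gives $[h]\le a_c$ directly without any limiting argument, once one observes $[h]\le[a]$ for every $h\in\mathrm{Her}(a)$. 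That last observation is standard: the strong closure of $\mathrm{Her}(a)$ in $A^{**}$ is $[a]A^{**}[a]$, so every positive element of $\mathrm{Her}(a)$ has range projection below $[a]$.

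So the argument organizes as: (i) reduce to positive $h$ in $\mathrm{Her}(a)$ with $a\in K(A)\pos$; (ii) note $[h]\le[a]$ from the description of the strong closure of $\mathrm{Her}(a)$ in $A^{**}$; (iii) note $[a]\le a_c$ by definition of $\Ped(A)$; (iv) conclude $[h]\le a_c$, so $h\in\Ped(A)\subseteq K(A)$; (v) since every element of $\mathrm{Her}(a)$ is a linear combination of four such positive elements and $K(A)$ is a linear subspace, $\mathrm{Her}(a)\subseteq K(A)$. The main obstacle, and the only subtle point, is step (ii)—making precise that passing to a hereditary subalgebra does not enlarge the range projection—but this is a routine fact about $A^{**}$ (the hereditary subalgebras of $A$ correspond to certain projections in $A^{**}$, with $\overline{aAa}^{\,\mathrm{strong}}=[a]A^{**}[a]$). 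One should also double-check that $K(A)$ is genuinely closed under the four-positive-elements decomposition; this is immediate since $K(A)$ is by construction a two-sided ideal, hence a linear subspace, and each positive/negative/imaginary part of an element of $K(A)\cap A\pos$-span lies in $\Ped(A)\subseteq K(A)$.
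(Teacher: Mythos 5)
Your central step (iii) is where the argument breaks. You assert that for $a\in K(A)\pos$ there is ``by definition'' an element $a_c\in A\pos$ with $[a]\le a_c$. That is the definition of $\Ped(A)$, not of membership in the Pedersen ideal: $K(A)$ is the ideal \emph{generated} by $\Ped(A)$, and its positive cone consists of elements dominated by finite sums of elements of $\Ped(A)$. The paper explicitly warns that $\Ped(A)$ is in general a proper subset of $K(A)\pos$ (equivalently, $\Ped(A)$ need not be closed under addition, since range projections of sums are suprema of projections, which need not be dominated by any single element of $A$). So for a general positive $a\in K(A)$ there may be no $a_c$ at all, and the chain $[h]\le[a]\le a_c$ has nowhere to start; as written, your proof establishes the proposition only for $a\in\Ped(A)$. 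Note how the paper's argument is built precisely to avoid needing a dominating element for every positive $h$ in the hereditary subalgebra: Cohen factorization writes an arbitrary $b$ in the hereditary subalgebra as $b=f(a)a'$ with $f(0)=0$, so that only the single element $f(a)$ has to be placed in $K(A)$, and then the fact that $K(A)$ is a two-sided algebraic ideal absorbs the arbitrary factor $a'\in A$. Your route, by contrast, requires every positive element of $\overline{aAa}$ to lie in $\Ped(A)$, which is a strictly stronger statement and is exactly what fails to follow once $a\notin\Ped(A)$.

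Two smaller points. First, your reduction to positive $a$ via a decomposition $a=\sum_{i=1}^4\lambda_i a_i$ with $a_i\in K(A)\pos$ does not control the hereditary subalgebra generated by $a$ in terms of those generated by the $a_i$; the correct reduction is to observe that the hereditary subalgebra generated by $a$ coincides with the one generated by $a^*a+aa^*$, which lies in $K(A)\pos$ because $K(A)$ is an ideal. Second, your steps (ii)--(iv) are correct and worth keeping: for $a\in\Ped(A)$, the observation that the weak closure of $\overline{aAa}$ in $A\dd$ is $[a]A\dd[a]$, hence $[h]\le[a]\le a_c$ for every positive $h$ in the hereditary subalgebra, gives a clean proof of that special case without Cohen factorization, and indeed shows the stronger conclusion that the positive part of the hereditary subalgebra lies in $\Ped(A)$. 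But to prove the proposition as stated, for all of $K(A)$, you need either the paper's factorization-plus-ideal argument or some other device for passing from $\Ped(A)$ to the hereditary cone it generates.
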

\begin{proof} By  Cohen's factorization theorem \cite{Cohen1959} \cite[p. 150--151]{Pedersen1998}, an element $b$ of the hereditary subalgebra generated by $a$ in a C*-algebra $A$ can be written as $f(a)a',$ where $f$ is a continuous non-negative function on the spectrum of $a$ with $f(0)=0$ and $f\leq1,$ and $a'$ is some element of $A$ that can be chosen arbitrarily close, in norm, to the given element $b.$  It follows from the spectral theorem and the fact that $f(0)=0$ that the range projection $[f(a)]$ of $f(a)$ is majorized by the range projection $[a]$ of $a.$ But then, $f(a)$ is in $\Ped(A)$ when $a$ is.  Since the Pedersen ideal is an algebraic two-sided ideal, see the end of Lemma 1.1 in \cite{Pedersen1966}, it follows that  the given element, $b=f(a)a'$ is in the Pedersen ideal, $K(A).$
\end{proof}

\begin{corollary} Let $A$ be a C*-algebra. Let $k$ be a element of $K_0^+ (A).$ Let $B$ be the hereditary subalgebra $\closure{kAk}.$ Then a \lsc trace $\tau$ of $A$ becomes a bounded trace when restricted to $B.$
\label{cor:restriction}\end{corollary}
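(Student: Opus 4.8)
The plan is to reduce the statement to Proposition~\ref{prop:PedersenAlgebras} together with the basic fact that a lower semicontinuous trace is finite (indeed continuous) on the Pedersen ideal, and that the Pedersen ideal is an algebraic ideal. First I would observe that $k\in K_0^+(A)\subseteq K(A)\pos$, so by Proposition~\ref{prop:PedersenAlgebras} the hereditary subalgebra $B=\closure{kAk}$ generated by $k$ is contained in $K(A)$. Hence $B\pos\subseteq K(A)\pos$, and the restriction of $\tau$ to $B$ is a trace on $B$ taking finite values on all of $B\pos$, since $\tau$ is densely finite lower semicontinuous and the Pedersen ideal is precisely the common core on which such traces are finite.

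The remaining point is to upgrade ``finite on $B\pos$'' to ``bounded on $B$'', i.e. to produce a uniform bound $\tau(b)\le M\|b\|$ for $b\in B\pos$. Here I would use that $B=\closure{kAk}$ has an approximate unit coming from $k$: in fact $k$ itself (or $g(k)$ for suitable $g$) is a strictly positive element of $B$, so every $b\in B\pos$ satisfies $b\leq \|b\|\cdot e$ for a single element $e\in B\pos$ playing the role of a ``local unit'' up to scaling — concretely one can take $e=$ a range-projection majorant of $k$ inside $K(A)$, using that $k\in K_0^+(A)$ means exactly that $[k]\leq k_c$ for some $k_c\in A^+$; then for $b\in B\pos$ one has $b\le\|b\|[b]\le\|b\|[k]\le\|b\|k_c$, and more usefully $b\leq \|b\| f(k)$ where $f(k)$ acts as a unit on $B$. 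Then $\tau(b)\le \|b\|\,\tau(f(k))$, and $\tau(f(k))<\infty$ by the previous paragraph, so $M:=\tau(f(k))$ is the desired bound. Thus $\tau|_B$ is a bounded (hence norm-continuous) positive trace on $B$.

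The main obstacle is the passage from finiteness to a uniform norm bound, i.e. verifying that $B$ has an element that acts as a unit (or a scalar multiple of a unit) on its positive cone and lies in the Pedersen ideal. The clean way is: since $k\in K_0^+(A)$ there is $k_c\in A^+$ with $[k]\le k_c$; replacing $k_c$ by $g(k_c)$ for $g$ continuous with $g(0)=0$, $g=1$ on the relevant part of the spectrum, we get $e:=g(k_c)\in\Ped(A)$ with $e[k]=[k]$, so $e$ is a unit for the hereditary algebra $B$, and $eb e=b$ for $b\in B$ forces $b\le\|b\|e$. Then $\tau(e)<\infty$ since $e\in K(A)\pos$ and $\tau$ is densely finite, giving $\tau(b)\le\|b\|\tau(e)$ for all $b\in B\pos$. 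Decomposing a general self-adjoint element of $B$ into positive and negative parts (both in $B$) extends the bound to all of $B$, completing the proof.
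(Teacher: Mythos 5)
Your first paragraph is exactly the paper's first step: since $k\in K_0^+(A)\subseteq K(A)$, Proposition~\ref{prop:PedersenAlgebras} puts $B=\closure{kAk}$ inside the Pedersen ideal, so $\tau$ is pointwise finite on $B\pos$. The gap is in the passage from finiteness to boundedness. Your construction of a ``local unit'' $e=g(k_c)$ with $g$ continuous, $g(0)=0$, and $e[k]=[k]$ does not work: the hypothesis $[k]\le k_c$ does \emph{not} imply that $[k]$ lies under a spectral projection $\chi_{[\epsilon,\infty)}(k_c)$ for some $\epsilon>0$, and $g(k_c)[k]=[k]$ forces exactly that (the set $g^{-1}(1)$ is closed and misses $0$). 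Domination only gives $\|[k]\chi_{[0,\epsilon)}(k_c)\|\le\sqrt{\epsilon}$, i.e.\ approximate, not exact, orthogonality to the low spectral part. Concretely, in $A=\compact(\ell^2)$ take $c$ positive with eigenvalues $2,4^{-2},4^{-3},\dots$ and a unit vector $\xi$ having nonzero components along every eigenvector, chosen so that $\|c^{-1/2}\xi\|\le 1$; then $p=|\xi\rangle\langle\xi|\le c$, yet $g(c)p\ne p$ for every continuous $g$ vanishing at $0$. So with $k=p$, $k_c=c$, your $e$ does not exist as constructed (the same objection applies, even more strongly, to the aside that ``$f(k)$ acts as a unit on $B$'': that requires $0$ to be isolated in the spectrum of $k$). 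The fallback $b\le\|b\|k_c$ is not a repair either, because the definition of $K_0^+$ only provides $k_c\in A^+$, not $k_c\in K(A)$, so $\tau(k_c)$ may be infinite. More structurally, what you need is an element of $A$ acting exactly as a unit on $B$, i.e.\ that $k$ be compact in the sense of Definition~\ref{def:compact} (Pedersen's $K_{00}^+$); membership in $K_0^+$ is a weaker condition, and the implication you are implicitly using is not established.

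The paper closes exactly this gap by a different, softer device: assuming $\tau|_B$ unbounded, pick $x_n$ in the positive unit ball of $B$ with $\tau(x_n)$ growing fast, and form the norm-convergent sum $x:=\sum 2^{-n}x_n$, which stays in the positive unit ball of the closed subalgebra $B$; monotonicity gives $\tau(x)\ge 2^{-n}\tau(x_n)$ for every $n$, so $\tau(x)=\infty$, contradicting pointwise finiteness on $B\pos\subseteq K(A)\pos$. This avoids any need for a local unit or for $\tau$-finiteness of the dominating element $k_c$. Your proof becomes correct if you replace your second and third paragraphs with this summation argument (your first paragraph already supplies the needed finiteness); alternatively, you would have to prove separately that some element of $K(A)\pos$ acts as a unit on $B$, which is a genuinely stronger statement than $k\in K_0^+(A)$ and is not available from the cited facts.
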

\begin{proof}By Proposition \ref{prop:PedersenAlgebras}, an element $\tau$ of the cone of \lsc traces, $LTC(A),$ becomes a pointwise finite positive linear functional when restricted to $B.$ If this positive functional were not bounded on $B,$ then we could find a sequence $x_n$ in the positive unit ball, $B^+_1,$ such that $\tau(x_n)>n2^{-n}.$ But then $x:=\sum 2^{-n} x_n$ is in the positive unit ball of $B$ and $\tau(x)\geq 2^{-n} \tau(x_n)>n,$ for all $n,$ which is a contradiction. \end{proof}

Recall Pedersen showed \cite{Pedersen1968b} that Pedersen ideals are topological vector spaces in a suitable topology, and that densely finite lower semicontinuous traces $LTC({A_i})$ can be identified with continuous linear functions on Pedersen ideals. When we form a tensor product of C*-algebras, the Pedersen ideals inherit a tensor product,  and thus so do the continuous linear functionals on the Pedersen ideals.  
As mentioned before,   we have a canonical algebraic injection 
$\T\colon K(A)^* \algtensor K(B)^* \rightarrow (K(A)\algtensor K(B))^*,$ and this provides (lemma \ref{lem:istrace}) an injection into    $(K(A\betatensor B))^*.$
 We define $LTC(A_1)\betatensor LTC(A_2)$ to be the closure,  with respect to the Pedersen topology induced by evaluation on elements of $K(A_1 \betatensor A_2 ),$ of 
  $LTC(A_1)\algtensor LTC(A_2).$
 Let $A_1$ and $A_2$ be C*-algebras, and let $C:=\AbA$ be their tensor product. 
Let $\T$ be the natural mapping $\T\colon  LTC({A_1})\betatensor LTC({A_2})\arrow LTC(C)$ after the above identification. 

Next, we in effect show that  under this mapping, the cone of \lsc traces, $LTC(C),$ is equal to the closure of $\{\tau_1\tensor\tau_2 : \tau_i\in LTC(A_i)\},$ where $LTC(A_i)$ denotes the cone of \lsc traces on $A_i,$ and the topology is the weak topology $\sigma(LT(C),K(C))$  provided by the Pedersen ideal. More precisely, we show that there is a homeomorphism at the level of topological cones:

\begin{theorem} Let $C:=\AbA$ be a C*-tensor product of C*-algebras $A_1$ and $A_2.$ Then $\T$ is a homeomorphism from $LTC(A_1)\betatensor LTC(A_2)$ onto $LTC(C),$ where $LTC$ denotes the cone of densely finite \lsc traces. \label{th:LTC}
\end{theorem}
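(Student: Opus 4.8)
The plan is to combine the $\tau$-density results of Theorem \ref{prop:pedtensor} with the already-established homeomorphism at the level of bounded traces (Corollary \ref{cor:TChomeo}), using the fact that a densely finite \lsc trace restricts to a bounded trace on each hereditary subalgebra generated by a Pedersen ideal element (Corollary \ref{cor:restriction}). First I would verify that $\T$ is well defined and injective: by Lemma \ref{lem:istrace}, a simple tensor $\tau_1\tensor\tau_2$ of \lsc traces is a \lsc trace on $C$, so $\T$ is defined on $LTC(A_1)\algtensor LTC(A_2)$, and it is continuous by construction since the topology on the domain is precisely the one induced by evaluation on $K(A_1\betatensor A_2)$. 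Injectivity of $\T$ on the algebraic tensor product follows as in Corollary \ref{cor:TChomeo} (the tensor product of linear functionals is injective; cf.\ the footnote there), and since the domain carries the topology induced by evaluation on $K(C)$, while $\T$ is evaluation on $K(C)$, the extension of $\T$ to the closure $LTC(A_1)\betatensor LTC(A_2)$ is injective by construction, essentially a tautology once the topologies are set up.

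Next I would prove surjectivity. Let $\tau\in LTC(C)$. The key point is that $\tau$ is determined by its values on $K(C)\pos$, and by Theorem \ref{prop:pedtensor}.(4), $K(A_1)\pos\algtensor K(A_2)\pos$ is $\tau$-dense, hence weakly dense, in $K(C)\pos$; so $\tau$ is determined by its values on elementary tensors $k_1\tensor k_2$ with $k_i\in K(A_i)\pos$. I would fix an element $k=k_1\tensor k_2$, pass to the hereditary subalgebras $B_i:=\closure{k_iA_ik_i}$, and observe that by Corollary \ref{cor:restriction} the trace $\tau$ restricts to a bounded trace on $\closure{(B_1\betatensor B_2)}$ — indeed the range projection of $k_1\tensor k_2$ is dominated by that of a Pedersen ideal element of $C$ by Theorem \ref{prop:pedtensor}.(1), so $B_1\betatensor B_2$ sits inside a hereditary subalgebra of the form $\closure{\kappa C\kappa}$ with $\kappa\in K_0^+(C)$. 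Then Corollary \ref{cor:TChomeo}, applied to the tensor product $B_1\betatensor B_2$ (whose tracial state space tensor-factors), shows that the restriction of $\tau$ to $B_1\betatensor B_2$ is, up to scaling, a tensor product $\sigma_1\tensor\sigma_2$ of bounded traces on $B_1$ and $B_2$. Running this over an increasing approximate unit of Pedersen ideal elements and checking consistency of the resulting local factorizations, I would assemble $\tau_1\in LTC(A_1)$ and $\tau_2\in LTC(A_2)$ with $\tau=\tau_1\tensor\tau_2$ in the limit, i.e.\ $\tau$ lies in the closure $\T(LTC(A_1)\betatensor LTC(A_2))$; the uniqueness part of Corollary \ref{cor:TChomeo} guarantees the local pieces are compatible, so the globalization goes through.

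Finally, for the homeomorphism claim I would imitate the scaling argument in the proof of Corollary \ref{cor:TChomeo}: the cone $LTC(C)$ is generated, in the appropriate sense, by its intersection with the "unit balls" coming from evaluation at a single Pedersen element $k$, and on each such compact (or closed) slice the map $\T$ is a continuous bijection between objects that are, by the bounded case, homeomorphic; one then uses that a continuous bijection which is a homeomorphism on each member of a compatible exhausting family of slices is a homeomorphism, together with a scaling argument to pass from the slices to the full cones. The main obstacle I expect is the globalization/consistency step in surjectivity: passing from the bounded factorizations of $\tau$ on each $B_1\betatensor B_2$ to a single global factorization $\tau=\tau_1\tensor\tau_2$ with $\tau_i$ \lsc and densely finite. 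One must check that as $k_1,k_2$ increase along approximate units, the local factors $\sigma_1^{(k)}, \sigma_2^{(k)}$ are mutually compatible (not merely each projective limit-consistent) so that they patch to genuine \lsc traces; this is where the uniqueness clause in Theorem \ref{th:TracialStates}/Corollary \ref{cor:TChomeo} and the hereditary-convexity of the $\tau$-topology (used already in Corollary \ref{cor:ped.approx}) do the real work, ruling out the kind of pathology that occurs for non-cancellative cones of general (non-densely-finite) traces.
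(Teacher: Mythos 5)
Your overall strategy (restrict to hereditary subalgebras generated by Pedersen-ideal elements, invoke the bounded case, pass to a limit along the approximate unit) is the same one the paper uses, but the surjectivity step as you have written it contains a genuine error. You claim that Corollary \ref{cor:TChomeo}, applied to $B_1\betatensor B_2$, shows that the restriction of $\tau$ to $B_1\betatensor B_2$ is, up to scaling, a simple tensor $\sigma_1\tensor\sigma_2$ of bounded traces. That corollary gives no such thing: it says the restriction lies in the image of $TC(B_1)\tensor TC(B_2)$, i.e.\ it is a limit of convex combinations of simple tensors; only \emph{extremal} tracial states factor as simple tensors (Theorem \ref{th:TracialStates}). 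Indeed the claim is false for a non-extremal trace: a sum of two distinct simple tensors of extremal bounded traces on $B_1\betatensor B_2$ is not a scalar multiple of any simple tensor. Consequently your globalization, which patches these putative local factorizations into $\tau=\tau_1\tensor\tau_2$, would prove the false statement that every densely finite \lsc trace on $\AbA$ is a simple tensor of traces, whereas the theorem only asserts that $\T$ maps the \emph{closure} $LTC(A_1)\betatensor LTC(A_2)$ of $LTC(A_1)\algtensor LTC(A_2)$ onto $LTC(C)$.

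What is actually needed, and what your final paragraph only gestures at, is a mechanism for passing from the local information (each restriction lies in the image of the bounded-case map) to the global statements of surjectivity \emph{and} bicontinuity. The paper supplies this by citing \cite[Th.~3.12]{ERS} to write $LTC(C)$ as a projective limit of the bounded cones $TC(\closure{(k_\lambda\tensor k'_{\lambda'})C(k_\lambda\tensor k'_{\lambda'})})$ along the approximate unit $k_\lambda\tensor k'_{\lambda'}$ in $\Ped(C)$ (and likewise for the left-hand column), so that Corollary \ref{cor:TChomeo} provides homeomorphisms levelwise and the conclusion follows because a projective limit of homeomorphisms, coherent with the bonding maps, is a homeomorphism (Engelking, Prop.~2.5.10, plus the universal property identifying the limit map with $\T$). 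Without such a projective-limit identification (or an equivalent compactness argument for the slices $\{\tau:\tau(k)\le 1\}$, which is not automatic inside $LTC$), knowing that each restriction of $\tau$ is approximable by sums of simple tensors of bounded traces does not by itself yield a single net in $LTC(A_1)\algtensor LTC(A_2)$ converging to $\tau$ pointwise on all of $K(C)$, nor does a continuous bijection on each slice automatically assemble into a homeomorphism of the cones. So the gap is concrete: replace the simple-tensor factorization claim by the projective-limit decomposition of the \lsc tracial cones, and carry out the homeomorphism argument at the level of that limit.
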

\begin{proof}
Denote by $\{k_\lambda\}$ and $\{k'_{\lambda'}\}$ the set of positive elements of norm less than 1 of $\Ped(A_1)$ and $\Ped(A_2)$ respectively. By \cite[th. 1.4.2 and par. 1.4.3]{Pedersen} each of these sets becomes a net, and approximate unit, under the usual partial ordering inherited from the ambient C*-algebra. By proposition \ref{prop:pedtensor},  the  tensor product $k_\lambda\tensor k'_{\lambda'}$, with index set given by the product order, is in $\Ped(C).$  By corollary \ref{cor:restriction},  the inclusion map
$$\iota\colon\closure{(k_\lambda\tensor k'_{\lambda'})C(k_\lambda\tensor k'_{\lambda'})}\arrow C$$ induces a restriction map
$$ \iota^{*}\colon LTC(C)\arrow TC(\closure{(k_\lambda\tensor k'_\lambda)C(k_\lambda\tensor k'_{\lambda'})}).$$
Similarly, there are inclusions, also denoted $\iota,$ of $\closure{k_{\lambda} A_1  k_{\lambda}}$ into $A_1$ and of $\closure{k'_{\lambda'} A_2 k'_{\lambda'} }$ into $A_2,$ and the associated restriction maps.

By \cite[Th. 3.12]{ERS}\footnote{Note that our $LTC(A)$ is denoted $T_A(A)$ in \cite{ERS}: we first apply their theorem 3.12 to get a projective limit decomposition of their $T(A)$, and then restrict using their proposition 3.11 to  $T_A(A).$}, dualize the trivial inductive limit(s) that have canonical homomorphisms given by the inclusion maps $\iota.$  This gives projective limit decompositions of the \lsc tracial cones:
\begin{diagram}
 TC(\closure{(k_\lambda\tensor k'_{\lambda'})C(k_\lambda\tensor k'_{\lambda'})}) &   & \lTo^\lim_{\iota^*} & LTC(C) \\
    \uTo^{\T}  &  &   &                                       \uTo_{\T} \\
  TC(\closure{k_\lambda A_1 k_\lambda})\betatensor TC(\closure{k'_{\lambda'} A_2 k'_{\lambda'}})  &   & \lTo^\lim_{\iota^*\tensor\iota^*\,}  &  LTC(A_1)\betatensor LTC(A_2 ) .\\ \end{diagram}
 Because of the simple form of the inclusion maps, it is clear that the map(s) in the left column are just restrictions of the map in the right column. The rows are projective limits, and the diagram commutes: in other words, the restrictions of the map $\T$ are coherent with respect to the projection maps. Corollary \ref{cor:TChomeo} shows that the vertical arrows in the left column are homeomorphisms. Proposition  2.5.10 in \cite{Engelking} shows, in effect, that the limit of homeomorphisms is a homeomorphism, and then by the universal property of projective limits, which ensures that the map provided by that proposition coincides with $\T,$ the map in the right column  is a homeomorphism.
\end{proof}

\begin{question} Does a result similar to the above hold for Pedersen's  \cite{Pedersen1971} C*-integrals? \end{question}
\section{Applications}

The Pedersen ideal is closely related to the Cuntz-Pedersen space \cite{CP}. Both are spaces upon which a class of traces act as bounded linear functionals. We are thus able to expand upon some of the  results of \cite{CP}, see Corollaries  \ref{cor:CP1} and \ref{cor:CP2}. In a similar vein,  dimension functions on a Cuntz semigroup are constructed in a slightly complicated way from the traces on the underlying  C*-algebra. Thus, we expect some, possibly subtle, connection between Pedersen ideals and Cuntz semigroups.

One fundamental comment on Pedersen ideals and Cuntz semigroups is that the Pedersen ideal is in fact the (smallest) ideal linearly generated by the relatively compact elements of the Cuntz semigroup (see a Remark without proof after 3.1 in \cite{TTk}). After demonstrating this fact, in Proposition \ref{prop:compactly.generated}, we then apply a Cuntz semigroup technique to prove a generalization of Pedersen's result \cite[Corollary 6]{Pedersen1968} that a surjective homomorphism maps the Pedersen ideal into and onto the  Pedersen ideal. We then further show that  the property of being a positive element of the Pedersen ideal is preserved under Cuntz equivalence in the stable rank 1 case. This may be surprising because the Pedersen ideal is after all not norm closed.

The original definition of the property  that $a$ is compact with respect to $b$ was phrased in terms of Hilbert modules \cite{CEI}. An equivalent definition is:
\begin{definition} Given two positive elements $a$ and $b$ of a C*-algebra $A$, we say that $a$ is \textit{compact with respect to} $b$ if some positive element $e$ of the hereditary subalgebra generated by $b$ acts as the unit on $a.$ Thus, $ae=a.$ We say that $a$ is \textit{compact} if it is compact with respect to itself.
\label{def:compact}
\end{definition} 
The above definition is implicit in \cite{CEI} but is given more explicitly in \cite[Prop. 4.10]{ORT}. For example, if $A$ is unital then every positive element is compact with respect to the unit.  

\begin{proposition} Elements that are compact with respect to some element of $A\tensor \compact$ are contained in $\Ped(A\tensor \compact).$ Furthermore, sums of these elements generate, as a hereditary subalgebra, $\Ped(A\tensor \compact).$ \label{prop:compactly.generated}
\end{proposition}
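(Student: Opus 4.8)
The plan is to prove the two assertions separately, but both will rest on the same elementary description of what it means to be compact with respect to an element, combined with the description of $\Ped$ recalled from \cite{Pedersen1966}. First I would show containment. Suppose $a$ is positive and compact with respect to some positive $c \in A\tensor\compact$; by Definition \ref{def:compact} there is a positive element $e$ in the hereditary subalgebra $\closure{c(A\tensor\compact)c}$ with $ae=a$. Then $e$ acts as a unit on $a$, so in particular $ea^{1/n}e = a^{1/n}$ for all $n$, hence the range projection $[a]$ (computed in $(A\tensor\compact)\dd$) satisfies $[a]\le [e]$, and $[e]$ in turn is dominated by, say, $\|e\|^{-1}e' $ for any positive $e'$ in the hereditary subalgebra that acts as a unit on $e$ — but more directly, since $e$ is itself a positive element of $A\tensor\compact$ with $[a]\le [e]\le \lambda e$ for a suitable scalar $\lambda$ whenever $e$ is chosen inside the unitization to dominate its own range projection. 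The cleanest route is: the hereditary subalgebra generated by $c$ contains an element $e$ with $ae=a$; replacing $e$ by $e^{1/2}$ we still have $ae^{1/2}\to a$, and one checks $[a]\le [e]$; finally by functional calculus there is a positive $f$ in that hereditary subalgebra with $f e = e$ and $[e]\le f$, so $[a]\le [e]\le f \in (A\tensor\compact)^+$, which is exactly the condition $a\in\Ped(A\tensor\compact)$.

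Next I would prove that sums of such compact elements generate $\Ped(A\tensor\compact)$ as a hereditary subalgebra. One inclusion is immediate from the first part together with Proposition \ref{prop:PedersenAlgebras}: each compact element lies in $\Ped$, hence in $K(A\tensor\compact)$, and the hereditary subalgebra it generates stays inside $K(A\tensor\compact)$, so the hereditary subalgebra generated by all finite sums of compact elements is contained in $K(A\tensor\compact)$; intersecting with the positive cone and using that $\Ped$ is exactly the set of positive elements whose range projection is dominated, one gets that this hereditary subalgebra is contained in the set we want (more precisely its closure is $K(A\tensor\compact)$ — I should be careful to state the conclusion as ``these sums generate $K(A\tensor\compact)$ as a hereditary subalgebra'', matching the remark attributed to \cite{TTk}). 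For the reverse inclusion, take $a\in\Ped(A\tensor\compact)$, so $[a]\le b_c$ for some positive $b_c\in A\tensor\compact$. I claim $a$ is compact with respect to itself: indeed $[a]$ is a projection in $(A\tensor\compact)\dd$ dominated by $b_c$, so the element $b_c$ acts as a unit on a slightly shrunken version of $a$; more cleanly, for $\varepsilon>0$ the element $(a-\varepsilon)_+$ has range projection a projection $p_\varepsilon$ with $p_\varepsilon a^{1/n}\to$ itself, and $(a-\varepsilon/2)_+$ acts as a unit on $(a-\varepsilon)_+$, exhibiting $(a-\varepsilon)_+$ as a compact element; since $(a-\varepsilon)_+\nearrow a$ in norm, $a$ lies in the hereditary subalgebra generated by compact elements. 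So every generator of $\Ped(A\tensor\compact)$, and hence (by the ideal/hereditary structure) every element of $K(A\tensor\compact)$, is captured.

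I expect the main obstacle to be bookkeeping around range projections and the distinction between $\Ped(A\tensor\compact)$ — the set of positive elements with dominated range projection — and the Pedersen ideal $K(A\tensor\compact)$ it generates; in particular one must be careful that ``compact with respect to $c$'' via an element $e$ of the \emph{hereditary} subalgebra of $c$ is genuinely enough to force $[a]$ to be dominated by an element of $A\tensor\compact$, rather than merely by $[c]$ in the bidual. The key technical lemma to isolate is: if $e\in A^+$ and $ae=a$ then $[a]\le [e]$, and if moreover $[e]\le g$ for some $g\in A^+$ then $[a]\le g$; applying this with $g$ chosen inside the hereditary subalgebra of $c$ (using that that hereditary subalgebra, being itself a C*-algebra, contains such $g$ for its own element $e$) closes the gap. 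The passage $(a-\varepsilon)_+\nearrow a$ handles the converse direction and should be routine given functional calculus, and I would invoke no machinery beyond the spectral theorem, Cohen factorization (already used in Proposition \ref{prop:PedersenAlgebras}), and the characterization of $\Ped$ recalled at the start of Section 3.
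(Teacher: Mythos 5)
Your containment argument rests on a false principle. Both in your ``cleanest route'' and in the closing ``key technical lemma'' you assume that the hereditary subalgebra generated by $c$ (being a C*-algebra) contains, for its element $e$, a positive $g$ with $ge=e$ and $[e]\le g$. No such $g$ exists in general: for $e(t)=t$ in $C_0((0,1])$, any $g\ge [e]$ would have to satisfy $g\ge 1$ on all of $(0,1]$, which is impossible for a function vanishing at $0$; the existence of such a $g$ is equivalent to $0$ being isolated in the spectrum of $e$, i.e.\ to $e$ itself being compact, which is exactly what you cannot assume. The detour through $[e]$ is also unnecessary: from $ae=a$ with $a,e\ge 0$ one gets $ea=a$, so $a$ and $e$ commute and $e$ is identically $1$ on the support of $a$ in the bidual, whence $[a]\le e$ with $e$ itself a positive element of $A\tensor\compact$ — which is already the defining condition for $a\in\Ped(A\tensor\compact)$. (Equivalently, as the paper does, one simply observes that compactness with respect to some element means $aa'=a$ for some $a'\in A\tensor\compact$, i.e.\ membership in Pedersen's set $K_{00}^+$.)

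The generation half is where the real gap lies. First, an element of $\Ped(A\tensor\compact)$ need \emph{not} be compact with respect to itself: the bump $a(t)=(1-|t|)_+$ in $C_0(\mathbb R)$ lies in $\Ped$, but no element of $\closure{aAa}$ acts as a unit on $a$. Your fallback is also wrong as stated: $(a-\varepsilon/2)_+$ does not act as a unit on $(a-\varepsilon)_+$; what is true is that $h(a)$ does, for $h$ continuous with $h(0)=0$ and $h\equiv 1$ on $[\varepsilon,\|a\|]$, so $(a-\varepsilon)_+$ is compact \emph{with respect to $a$}. But even after this repair, the concluding step ``$(a-\varepsilon)_+\to a$ in norm, hence $a$ lies in the hereditary subalgebra generated by compact elements'' proves the wrong statement: norm limits leave the Pedersen ideal, and since every positive element of $A\tensor\compact$ is a norm limit of elements $(a-\varepsilon)_+$ that are compact with respect to it, the norm-closed hereditary subalgebra generated by the compact elements is all of $A\tensor\compact$, not $K(A\tensor\compact)$ — which also contradicts your own forward inclusion. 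The generation assertion must be read, and proved, in Pedersen's algebraic/hereditary sense, and that is exactly what the paper's proof does: it identifies the compact-with-respect-to-something elements with $K_{00}^+=\{b\in A\tensor\compact : bb'=b \mbox{ for some } b'\}$ and then invokes the characterization from \cite{Pedersen1971} that the linear span of $K_1^+=\{x : x\le \sum x_i,\ x_i\in K_{00}^+\}$ is the Pedersen ideal. Your proposal needs either that citation or a reproof of it; the norm-limit argument cannot substitute for it.
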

\begin{proof} Consider then a positive element $a$ of $A\tensor \compact$ such that some other element $a'$ in $A\tensor \compact$ acts as the unit on it. This means that $aa'=a.$ However, one of Pedersen's characterizations \cite{Pedersen1971} of the Pedersen ideal is as follows:

Let the set $K_{00}^+(A\tensor \compact)$ be $\{b\in A\tensor \compact : bb'=b  \mbox{ for some } b'\in A\tensor \compact \}.$ Let $K_1^+(A\tensor \compact)$ be the elements $x\in A\tensor \compact^+$ such that $x\leq \sum x_i$ for some finite sum of $x_i$ in $ K_{00}^+(A\tensor \compact).$  Then the linear span of $K_1^+(A\tensor \compact)$ is the Pedersen ideal. 
But comparing the definition of $K_{00}^+(A\tensor \compact)$  with the property described in the first paragraph, we see that the proposition follows.
\end{proof}

It has been suggested \cite{WinterZacharias} that the appropriate class of morphisms of the Cuntz semigroup are maps induced by orthogonality preserving completely positive  maps. The term completely positive order zero map is also used.  
Recall that the  \cpozs are equivalently cross-sections of certain homomorphisms. This result is simplest to state for contractions, but any \cpoz can be made into a contraction by multiplying by a scalar.  

\begin{lemma}[\protect{\cite[Cor. 4.1]{WinterZacharias}}] Let A and B be C*-algebras, and $\phi : A \to B$  a given contractive \cpoz. Let $f\in  C((0, 1])$ be the identity map, $f(x)=x.$ Then, there exists a *-homomorphism  $\rho \colon C_0 ((0, 1]) \tensor A \to B$ such that the slice map given by $\rho (f \tensor a) $ is equal to the given map $\phi(a).$ 
Also, any given *-homomorphism $\rho \colon C_0 ((0, 1]) \tensor A \to B$ induces a \cpoz $\phi : A \to B$ via $\phi(a) := \rho(f \tensor a).$
\label{lemma:factor.by.hom}
\end{lemma}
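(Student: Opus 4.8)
The statement is a known structural fact about completely positive order zero maps, so my plan is to reproduce the standard argument and simply cite the underlying machinery from \cite{WinterZacharias}. The key observation is that a contractive \cpoz $\phi\colon A\to B$ has a well-behaved functional calculus: for a positive contraction $a\in A$ one can make sense of $\phi^{(n)}(a^n):=(\text{suitable power of }\phi(a))$ in a way compatible with the order-zero (orthogonality-preserving) condition. Concretely, I would first invoke the structure theorem for order zero maps, which produces a positive element $h$ in the multiplier algebra of the hereditary subalgebra generated by the range, commuting with the range of an associated $*$-homomorphism $\pi\colon A\to M(C^*(\phi(A)))$, such that $\phi(a)=h\,\pi(a)$ for all $a\in A$. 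This is essentially \cite[Cor. 4.1]{WinterZacharias} in disguise, but stating it this way makes the construction of $\rho$ transparent.

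Given the pair $(h,\pi)$, the plan is to define $\rho\colon C_0((0,1])\tensor A\to B$ on elementary tensors by $\rho(g\tensor a):=g(h)\,\pi(a)$, where $g(h)$ is defined by continuous functional calculus (note $g(0)=0$ since $g\in C_0((0,1])$, so this lands in the right algebra). I would check that this is well-defined and multiplicative: the crucial point is that $h$ commutes with $\pi(A)$, so $\rho((g_1\tensor a_1)(g_2\tensor a_2))=\rho(g_1g_2\tensor a_1a_2)=(g_1g_2)(h)\pi(a_1)\pi(a_2)=g_1(h)\pi(a_1)g_2(h)\pi(a_2)$, using commutativity to move $g_2(h)$ past $\pi(a_1)$. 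Positivity and the $*$-preserving property follow similarly, and contractivity of $\phi$ ensures $\|h\|\le 1$ so that the functional calculus over $(0,1]$ suffices. One extends $\rho$ from the algebraic tensor product to $C_0((0,1])\tensor A$ by continuity; since $C_0((0,1])$ is nuclear (indeed abelian) there is no ambiguity of tensor norm. Taking $g=f$ the identity function gives $\rho(f\tensor a)=f(h)\pi(a)=h\pi(a)=\phi(a)$, which is the first assertion.

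For the converse, given any $*$-homomorphism $\rho\colon C_0((0,1])\tensor A\to B$, I would set $\phi(a):=\rho(f\tensor a)$ and verify directly that $\phi$ is completely positive (it is the composition of $a\mapsto f\tensor a$, which is completely positive, with the $*$-homomorphism $\rho$) and order zero: if $a_1a_2=0$ with $a_1,a_2\ge 0$, then $(f\tensor a_1)(f\tensor a_2)=f^2\tensor a_1a_2=0$, hence $\phi(a_1)\phi(a_2)=\rho(f\tensor a_1)\rho(f\tensor a_2)=0$. Contractivity of $\phi$ follows from $\|f\tensor a\|=\|a\|$ and the fact that $\rho$, being a $*$-homomorphism, is contractive.

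The main obstacle, and the only genuinely substantive point, is establishing the commuting pair $(h,\pi)$ with $\phi(a)=h\pi(a)$ — that is, the Wolff–Winter–Zacharias structure theorem for order zero maps. Rather than reprove it, I will cite \cite[Cor. 4.1]{WinterZacharias} (or the companion structure results there) and present the tensor-product reformulation above as essentially a restatement; everything else is a routine check on elementary tensors followed by a density/continuity argument. A minor care point is making sure all functional calculus elements $g(h)$ genuinely lie in $B$ rather than merely in a multiplier algebra: this is handled by noting that $g(0)=0$ forces $g(h)$ into the hereditary subalgebra generated by $h$, which sits inside $\overline{C^*(\phi(A))}\subseteq B$.
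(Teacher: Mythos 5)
The paper itself offers no proof of this lemma: it is quoted verbatim from Winter--Zacharias \cite[Cor. 4.1]{WinterZacharias}, so there is no in-paper argument to diverge from, and your reconstruction of the standard proof (structure theorem $\phi(a)=h\pi(a)$ with $h$ commuting with $\pi(A)$, then $\rho(g\tensor a):=g(h)\pi(a)$ on elementary tensors, extended using nuclearity of $C_0((0,1])$) is the right route. Your converse direction is complete and correct: complete positivity of $a\mapsto\rho(f\tensor a)$ as a composition, order zero from $(f\tensor a_1)(f\tensor a_2)=f^2\tensor a_1a_2=0$, and contractivity because $\rho$ is a $*$-homomorphism and $\|f\tensor a\|=\|a\|$.

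One step in the forward direction is misjustified, though fixably so. Your ``minor care point'' is resolved incorrectly: it is \emph{not} true in general that $g(0)=0$ forces $g(h)\in C^*(\phi(A))$, because $h$ is only a multiplier (the strict limit of $\phi(e_\lambda)$ along an approximate unit) and need not lie in, nor generate a hereditary subalgebra inside, $C^*(\phi(A))$. For instance, take $\phi$ to be the identity map of $A=C_0((0,1])$: then $h=1\in M(A)\setminus A$ and $\pi=\phi$, so $g(h)=g(1)\cdot 1$ lies outside $A$ whenever $g(1)\neq 0$. What is true, and all that the construction of $\rho$ requires, is that the \emph{products} $g(h)\pi(a)$ lie in $C^*(\phi(A))\subseteq B$. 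This follows by approximating $g$ uniformly by polynomials vanishing at $0$ and checking inductively that $h^{n}\pi(a)\in C^*(\phi(A))$ for all $n$, e.g. $h^{n+1}\pi(a)=\lim_\lambda h^{n}\pi(e_\lambda)\,\phi(a)$ is a norm limit of products of elements already known to lie in $C^*(\phi(A))$. With that substitution, the rest of your argument (multiplicativity via the commutation of $h$ with $\pi(A)$, well-definedness and continuity on the algebraic tensor product, and $\rho(f\tensor a)=h\pi(a)=\phi(a)$) goes through exactly as you wrote it.
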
 

We now show that  such maps take the Pedersen ideal into the Pedersen ideal. 
Recall that \cite[Theorem 2.3 and 2.4]{Pedersen1969} that the $\tau$-topology on a Pedersen ideal is given  by seminorms, and the seminorms are in fact the \seminorms on the Pedersen ideal. These seminorms can be extended to \lsc \seminorms on the positive cone of the ambient C*-algebra. Conversely, a \lsc \seminorm is finite on the Pedersen ideal.
For the reader's convenience we recall that invariant positive convex functionals have the properties:
\begin{enumerate}\item $\tau(\alpha x)=\alpha\tau(x),$ for positive real $\alpha,$ 
\item $\tau(x+y)\leq \tau(x)+\tau(y),$ for positive elements $x$ and $y$
\item For all $v$, $\tau(vv^*)=\tau(v^*v),$ and
\item If $x\geq y$ then $\tau(x)\geq\tau(y).$
\end{enumerate}

Now a simple but useful lemma:
\begin{lemma} If $\phi : A \to B$ is a faithful \cpoz, and $\tau$ is a \lsc \seminorm on $B,$ then $\tau\compose\phi$ is a \lsc \seminorm on $A.$ \label{lem:boundedness}\end{lemma}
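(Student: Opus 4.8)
The plan is to verify the four defining properties of a lower semicontinuous faithful finite invariant convex functional for the composition $\tau\circ\phi$, using the hypothesis that $\phi$ is faithful (so injective on positive elements, in particular $\phi(a)\neq 0$ for $a\neq 0$) and orthogonality preserving completely positive. First I would note that $\tau\circ\phi$ is finite on the Pedersen ideal $K(A)$: by Lemma \ref{lemma:factor.by.hom} the map $\phi$ factors through a $*$-homomorphism $\rho\colon C_0((0,1])\tensor A\to B$, and since $*$-homomorphisms carry Pedersen ideals into Pedersen ideals (Pedersen's classical result, or apply Proposition \ref{prop:compactly.generated} together with the characterization of $K$ as the ideal generated by compact-with-respect-to elements), the image $\phi(K(A))$ lies in $K(B)$ on which $\tau$ is finite; equivalently, one uses that a \lsc faithful finite invariant convex functional on $B$ pulls back along $\phi$ to something finite on $K(A)$.

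Next I would check the algebraic properties. Positive homogeneity, property (1), is immediate from linearity of $\phi$ and property (1) for $\tau$. Subadditivity, property (2), follows from linearity of $\phi$ (so $\phi(x+y)=\phi(x)+\phi(y)$) together with subadditivity of $\tau$. Property (4), order preservation, follows because completely positive maps are positive, hence $x\geq y$ implies $\phi(x)\geq\phi(y)$, and then $\tau(\phi(x))\geq\tau(\phi(y))$ by property (4) for $\tau$. Lower semicontinuity of $\tau\circ\phi$ follows because $\phi$ is norm-continuous and $\tau$ is \lsc, and a composition of a continuous map with a \lsc map is \lsc.

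The main obstacle, and the only place the order-zero (orthogonality preserving) hypothesis is really needed, is the trace-like property (3): $\tau(\phi(vv^*))=\tau(\phi(v^*v))$ for all $v\in A$. This does not follow from positivity alone. The way I would handle it is to invoke the factorization $\phi=\rho(f\tensor\,\cdot\,)$ from Lemma \ref{lemma:factor.by.hom}. Write $v=u|v|$ via the polar decomposition in $A^{**}$, so $vv^*=u(v^*v)u^*$ with $u$ a partial isometry; the subtlety is that $u$ need not lie in $A$. I would instead argue at the level of the structure maps: in the order-zero setting one has the supporting $*$-homomorphism $\pi_\phi$ from the multiplier algebra of $C_0((0,1])\tensor A$, and $\phi(a)=\pi_\phi(h)\rho_A(a)$ where $h$ is a strictly positive element and $\rho_A$ is a $*$-homomorphism commuting with $\pi_\phi(h)$. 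Then $\phi(vv^*)=\pi_\phi(h)\rho_A(v)\rho_A(v)^*$ and $\phi(v^*v)=\pi_\phi(h)\rho_A(v)^*\rho_A(v)$; setting $w:=\pi_\phi(h)^{1/2}\rho_A(v)$ we get $\phi(vv^*)=ww^*$ and $\phi(v^*v)=w^*w$ (using commutativity of $\pi_\phi(h)$ with the range of $\rho_A$), so property (3) for $\tau$ applied to $w$ gives exactly $\tau(\phi(vv^*))=\tau(\phi(v^*v))$. One should be slightly careful that $w$ lies in a domain where $\tau$ is defined — but since $\tau$ extends to a \lsc invariant convex functional on the positive cone of the ambient C*-algebra (or its enveloping algebra, by normality considerations on the relevant hereditary subalgebra), this causes no real trouble; the essential point is simply that the order-zero structure exhibits $\phi(vv^*)$ and $\phi(v^*v)$ as $ww^*$ and $w^*w$ for a common $w$.
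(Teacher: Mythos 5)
Your treatment of the key point, the invariance property $\tau(\phi(vv^*))=\tau(\phi(v^*v))$, rests on the same idea as the paper's proof: use the order-zero structure to exhibit $\phi(vv^*)$ and $\phi(v^*v)$ as $ww^*$ and $w^*w$ for a single element $w$. The difference is in the choice of $w$. The paper simply writes $f\tensor vv^*=(f^{1/2}\tensor v)(f^{1/2}\tensor v)^*$ and $f\tensor v^*v=(f^{1/2}\tensor v)^*(f^{1/2}\tensor v)$ and applies the homomorphism $\rho$ of Lemma \ref{lemma:factor.by.hom}, so that $w=\rho(f^{1/2}\tensor v)$ lies in $B$ itself and property (3) of $\tau$ applies with no further comment. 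You instead pass to the Winter--Zacharias supporting homomorphism picture $\phi(\cdot)=\pi_\phi(h)\rho_A(\cdot)$ and take $w=\pi_\phi(h)^{1/2}\rho_A(v)$, which lives in the multiplier algebra or bidual rather than in $B$; this forces the extension of $\tau$ (with its invariance property) beyond $B$ that you acknowledge only in passing, and that extension is not free of charge. The fix is exactly the paper's choice of $w$: it keeps everything inside $B$ and makes your final worry evaporate, so I would replace the supporting-homomorphism detour by the elementary factorization inside $C_0((0,1])\tensor A$.

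One genuine flaw in your preliminary step: you argue that $\tau\compose\phi$ is finite on $K(A)$ because $\phi(K(A))\subseteq K(B)$, citing that *-homomorphisms preserve Pedersen ideals. But $\phi(a)=\rho(f\tensor a)$, and $f\tensor a$ need not lie in the Pedersen ideal of $C_0((0,1])\tensor A$: the identity function $f$ is not in the Pedersen ideal of $C_0((0,1])$ (its support is not bounded away from $0$), so Proposition \ref{prop:hom} does not apply. Whether an order-zero map carries $K(A)$ into $K(B)$ is precisely the content of Theorem \ref{th:cpoz}, which requires extra hypotheses on $B$ (simple, prime, or finite-dimensional centroid) and is not available here; note that the paper's own proof of this lemma does not claim finiteness on the Pedersen ideal at all, treating only the invariance property as the non-immediate point. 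Your remaining verifications (homogeneity, subadditivity, order preservation, lower semicontinuity from norm continuity of $\phi$) are fine and are what the paper dismisses as immediate.
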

\begin{proof} The only property that is not immediate is that $\tau(\phi(vv^*))=\tau(\phi(v^*v)).$ However, expanding the left side as sections of a homomorphism $\rho,$ giving $\tau(\rho(f^{1/2}\tensor v)\rho(f^{1/2}\tensor v)^*)$ and the right side similarly, the property follows.
\end{proof}

\begin{proposition} A *-homorphism $\phi\colon A\to B$ maps the Pedersen ideal of $A$ into the Pedersen ideal of $B.$ \label{prop:hom}  \end{proposition}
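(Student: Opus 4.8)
\emph{Proof proposal.} The plan is to reduce the statement to the fact that $\phi$ carries a convenient set of generators of $K(A)$ into $K(B)$. The most direct choice of generators is the one already recalled in the proof of Proposition~\ref{prop:compactly.generated}: writing $K_{00}^+(A)=\{b\in A^+ : bb'=b \text{ for some } b'\in A\}$ and letting $K_1^+(A)$ be the set of positive elements of $A$ dominated by a finite sum of elements of $K_{00}^+(A)$, one has by Pedersen \cite{Pedersen1971} that the linear span of $K_1^+(A)$ is exactly $K(A)$. Consequently, since every element of $K(A)$ is a linear combination of elements of $K_1^+(A)$, it suffices to prove that $\phi\bigl(K_1^+(A)\bigr)\subseteq K(B)$.

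The key steps are then short. First, $\phi$ maps $K_{00}^+(A)$ into $K_{00}^+(B)$: if $b\in A^+$ satisfies $bb'=b$ for some $b'\in A$, then $\phi(b)\in B^+$ and $\phi(b)\phi(b')=\phi(b)$, so $\phi(b)\in K_{00}^+(B)$ with witness $\phi(b')$. Second, if $x\in K_1^+(A)$, say $0\le x\le \sum_{i=1}^n x_i$ with each $x_i\in K_{00}^+(A)$, then applying the positive (hence order-preserving) map $\phi$ gives $0\le\phi(x)\le\sum_{i=1}^n\phi(x_i)$ with each $\phi(x_i)\in K_{00}^+(B)$; thus $\phi(x)\in K_1^+(B)\subseteq K(B)$. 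Taking linear combinations yields $\phi(K(A))\subseteq K(B)$, as required.

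An alternative route, parallel to the proof of Theorem~\ref{prop:pedtensor}.(1), argues through range projections: for $a\in\Ped(A)$ with $\|a\|\le1$ and range projection majorized by some $a_c\in A^+$, one has $a^{1/n}\le[a]\le a_c$, hence $\phi(a)^{1/n}=\phi(a^{1/n})\le\phi(a_c)$; passing to the strong limit in $B^{**}$ gives that the range projection of $\phi(a)$ is majorized by $\phi(a_c)\in B^+$, so $\phi(a)\in\Ped(B)$. A scaling argument removes the norm restriction, and then $\phi(K(A))\subseteq K(B)$ follows because $K(A)$ is the algebraic two-sided ideal generated by $\Ped(A)$ and $K(B)$ is an algebraic two-sided ideal of $B$ containing $\phi(\Ped(A))$.

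I do not expect a genuine obstacle here; the only point requiring a moment's care is the passage from ``$\phi$ preserves the positive generators'' to ``$\phi$ preserves the ideal they generate'', which is handled by the algebra-homomorphism property of $\phi$ together with the fact that $K(B)$ is a two-sided (algebraic) ideal of $B$. In the range-projection route it is worth noting that although $\phi$, and hence $\phi^{**}\colon A^{**}\to B^{**}$, need not be injective, only the normality and order-preservation of $\phi^{**}$ are used, so non-injectivity causes no difficulty.
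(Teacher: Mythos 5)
Your first argument is exactly the paper's proof: it uses Pedersen's characterization of $K(A)$ as generated by $K_{00}^+(A)$ and the observation that a *-homomorphism preserves the relation $ab=a$, with the paper leaving implicit the bookkeeping (domination by sums and passage to linear spans) that you spell out. The proposal is correct; the alternative range-projection route is sound as well but is just an extra, equivalent path.
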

\begin{proof}
One of Pedersen's characterizations \cite{Pedersen1971} of the Pedersen ideal $K(A)$ of $A$ is that it is generated by $K_{00}^+ (A):=\{a\in A^{+} : \exists b\in A^+, ab=a\}$. A*-homomorphism will preserve the property $ab=a$ and thus maps $K_{00}^+ (A)$ into $K_{00}^+ (B).$ 
\end{proof}

Now we will show a result that is similar to Pedersen's result \cite[Corollary 6]{Pedersen1968} that a surjective homomorphism maps the Pedersen ideal into and onto the  Pedersen ideal.

\begin{theorem} A faithful surjective \cpoz $\phi\colon A\to B$ maps the Pedersen ideal of $A$ into the Pedersen ideal of $B,$ if $B$ is either simple, prime, or has finite-dimensional centroid $Z(\mathcal M(B)).$ 
\label{th:cpoz}\end{theorem}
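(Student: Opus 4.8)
The plan is to use the structure theory of completely positive order zero maps to reduce the statement to the case of a $*$-homomorphism, which is Proposition \ref{prop:hom}. First I would invoke the structure theorem for order zero maps \cite[Theorem 2.3]{WinterZacharias} --- equivalently, one extracts from the $*$-homomorphism $\rho\colon C_0((0,1])\tensor A\to B$ of Lemma \ref{lemma:factor.by.hom} the strict limits $h:=\lim_\lambda\rho(f\tensor e_\lambda)$ and $\pi(a):=\lim_n\rho(f^{1/n}\tensor a)$, where $(e_\lambda)$ is an approximate unit of $A$. Since $\phi$ is surjective, $C^{*}(\phi(A))=B$, and one obtains a positive element $h$ of $\mathcal M(B)\cap B'=Z(\mathcal M(B))$ together with a $*$-homomorphism $\pi\colon A\to\mathcal M(B)$ such that $\phi(a)=h\,\pi(a)$ for all $a\in A$. (Rescaling, I may assume $\phi$ is contractive, and I may assume $B\neq 0$.)

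Next I would show that each of the three hypotheses forces $Z(\mathcal M(B))$ to be finite-dimensional. If $B$ is prime then $\mathcal M(B)$ is prime: if $I',J'$ were nonzero ideals of $\mathcal M(B)$ with $I'J'=0$, then $I'\cap B$ and $J'\cap B$ are ideals of $B$ with zero product, so one of them, say $I'\cap B$, vanishes, whence $I'B\subseteq I'\cap B=0$ and $I'=0$ because $B$ is essential in $\mathcal M(B)$ --- a contradiction. And a prime unital C*-algebra has trivial centre, since a non-scalar self-adjoint central element would, via functional calculus, yield nonzero orthogonal central elements $p,q$, and the closed ideals they generate would have zero product (every product of generators absorbs the central element $pq=0$), contradicting primeness. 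As simple implies prime, in the first two cases $Z(\mathcal M(B))=\C 1$, while the third gives finite-dimensionality directly. Letting $p_1,\dots,p_n$ be the minimal central projections of $\mathcal M(B)$, with $\sum_i p_i=1$, I would then write $B=\bigoplus_i p_iB$ and $\phi_i:=p_i\phi(\cdot)\colon A\to p_iB$; each $\phi_i$ is again a surjective completely positive order zero map (surjective because $p_i\phi(A)=p_iB$), with $Z(\mathcal M(p_iB))=\C p_i$. Since $K(B)=\bigoplus_i K(p_iB)$, it suffices to treat each $\phi_i$; that is, I may assume $Z(\mathcal M(B))=\C 1$.

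In that case $h=\lambda\cdot 1$ for some $\lambda\ge 0$, and $\lambda>0$ because $\phi$ is surjective and $B\neq 0$; consequently $\pi=\lambda^{-1}\phi$ takes values in $B$ and is a $*$-homomorphism $A\to B$. By Proposition \ref{prop:hom} it carries $K(A)$ into $K(B)$, so $\phi(K(A))=\lambda\,\pi(K(A))\subseteq K(B)$, as desired; in fact, since $\pi$ is also surjective, $\phi(K(A))=K(B)$, which generalizes Pedersen's \cite[Corollary 6]{Pedersen1968}. I expect the main obstacle to be the middle step: distilling from the hypotheses on $B$ the single structural fact that the multiplier $h$ attached to $\phi$ is a scalar (equivalently, invertible). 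Once that is in hand, the order zero map behaves on Pedersen ideals exactly like a $*$-homomorphism and nothing more is needed; without it --- for instance when $Z(\mathcal M(B))$ is infinite-dimensional --- $h$ can fail to be invertible and the argument breaks down.
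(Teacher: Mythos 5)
Your proposal is correct, and it reaches the conclusion by a genuinely different route than the paper, although both arguments pivot on the Winter--Zacharias structure theorem behind Lemma \ref{lemma:factor.by.hom}: write $\phi(a)=h\pi(a)$ with $h\in\mathcal M(B)\cap B'=Z(\mathcal M(B))$ (surjectivity giving $C^*(\phi(A))=B$) and $\pi$ a $*$-homomorphism. The paper treats the finite-dimensional-centroid case directly by functional calculus on the central element: since it has finite spectrum, one picks $g\in C_0((0,1])$ with $g(z)$ a projection and $g(z)f(z)=f(z)$, so that $a\mapsto\rho(g\tensor a)=g(z)\pi(a)$ is a $*$-homomorphism into $B$ and hence maps $K(A)$ into $K(B)$; then $\phi(a)=f(z)\rho(g\tensor a)$ stays in $K(B)$ because $K(B)$ absorbs multiplication by the multiplier $f(z)$, and the simple and prime cases are deduced afterwards from the Dauns--Hofmann theorem. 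You instead decompose $B=\bigoplus_i p_iB$ along the minimal projections of the finite-dimensional centroid, observe $K(B)=\bigoplus_i K(p_iB)$ and that each compression $p_i\phi$ is again a surjective order zero map with trivial centroid, and in the trivial-centroid case use surjectivity to force $h=\lambda 1$ with $\lambda>0$, so that $\phi$ is literally a positive scalar multiple of a $*$-homomorphism into $B$ and Proposition \ref{prop:hom} finishes; you also verify directly, rather than via Dauns--Hofmann, that primeness gives trivial centroid. Your reduction requires a couple of easy checks the paper avoids (that $Z(\mathcal M(p_iB))=\C p_i$ and that Pedersen ideals are additive over finite direct sums), but it buys a cleaner endgame --- no need to argue that $K(B)$ is stable under central multipliers --- an explicit role for the surjectivity hypothesis (invertibility of $h$), and, in the simple and prime cases, the stronger conclusion $\phi(K(A))=K(B)$ via Pedersen's \cite[Cor. 6]{Pedersen1968}. (Neither your argument nor the paper's appears to use faithfulness.)
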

\begin{proof}
Let us suppose that B has finite dimensional centroid $C :=
Z(\mathcal M(B)).$ Suppose that $a$ is some positive element of the Pedersen
ideal of A. The homomorphism provided by lemma \ref{lemma:factor.by.hom} has the property
\cite[Cor. 4.2] {WinterZacharias} that $\rho(g, a) = g(z)\pi(a)$ where $z$ commutes with and
multiplies the range of the given map $\phi$, and $\pi$ is a homomorphism.
Note that $z$ is thus in the centroid $C.$ Let $f \in C_0((0, 1])$ be the identity
map, $f(x) = x.$ Because the centroid is finite-dimensional, we can find
a function $g \in C_0((0, 1])$ such that $g(z)$ is a projection and $g$ acts as
the identity on $f.$ But then we have that $\phi(a) = \rho(f, a) = f(z)\rho(g, a)$
The map $a \mapsto \rho(g, a) = g(z)\pi(a)$ has range contained in $B$ by lemma \ref{lemma:factor.by.hom}, and is a homomorphism because the element $g(z)$ is a central projection and
because $\pi$ is a homomorphism. Thus this map takes the Pedersen
ideal of $A$ into the Pedersen ideal of $B.$ Thus $\rho(g, a)$ is an element of
the Pedersen ideal of $B,$ and thus when multiplied by $f(z)$ we get again
an element of the Pedersen ideal. Thus $\phi(a) = \rho(f, a) = f(z)\rho(g, a)$ is
in the Pedersen ideal. This proves the result in the case that B has finite dimensional centroid, and the other cases then follow from the
Dauns-Hoffman theorem \cite[Cor. 4.4.8]{Pedersen}.
 \end{proof} 

%
%
\begin{corollary} Let $\rho\colon D\betatensor A\rightarrow B$ be a *-homomorphism. Let $f$ be
either a compact element of $D$ or an element of $D$ such that $0$ is isolated
in the spectrum of $|f|.$ The slice map $\rho\colon A\to B$ given by $a \mapsto \rho(f\tensor a)$ maps the Pedersen ideal of $A$ into the Pedersen ideal of $B.$
\label{cor:slice.maps}\end{corollary}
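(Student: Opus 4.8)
The plan is to reduce the corollary to the case of a homomorphism whose source is a tensor product by a nice rank-one-like element, and then apply Theorem~\ref{th:cpoz} (or rather its core mechanism) together with Proposition~\ref{prop:hom}. First I would observe that the slice map $a\mapsto\rho(f\tensor a)$ factors through the homomorphism $\rho$ restricted to the C*-subalgebra $\closure{C^*(f)\tensor A}\subseteq D\betatensor A$; so without loss of generality I may assume $D=C^*(f)$ is the (possibly nonunital) commutative C*-algebra generated by $f$. The two hypotheses on $f$ are designed precisely so that, in this commutative picture, the function $g=$ ``range projection of $|f|$'' either exists literally inside $C^*(|f|)$ (when $0$ is isolated in the spectrum of $|f|$, so $g$ is a genuine element of $D$) or behaves like a unit on $f$ within a hereditary subalgebra (when $f$ is compact, by Definition~\ref{def:compact} there is $e$ in the hereditary subalgebra generated by $f$ with $fe=f$).

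Next I would handle the compact case directly. If $f$ is compact, pick $e\geq 0$ in the hereditary subalgebra generated by $f$ with $fe=f=ef$; then for every positive $a\in A$ we have $(f\tensor a)(e\tensor a')=f\tensor a$ whenever $aa'=a$, so elementary tensors $f\tensor a$ with $a\in K_{00}^+(A)$ land in $K_{00}^+(D\betatensor A)$. Indeed, if $a\in K_{00}^+(A)$ with $ab=a$, then $(f\tensor a)(e\tensor b)=fe\tensor ab=f\tensor a$, and $e\tensor b\in D\betatensor A$, so $f\tensor a\in K_{00}^+(D\betatensor A)\subseteq K(D\betatensor A)$ by Pedersen's characterization used in Proposition~\ref{prop:hom}. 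Since $\rho$ is a *-homomorphism, Proposition~\ref{prop:hom} gives $\rho(f\tensor a)\in K(B)$ for all $a\in K_{00}^+(A)$; and since $K(A)$ is the ideal generated by $K_{00}^+(A)$ while the slice map $a\mapsto\rho(f\tensor a)$ is linear and respects the $A$-bimodule structure (multiplication by $d\in A$ corresponds to multiplying $f\tensor a$ by $1_{\mathcal M}\tensor d$ inside the multiplier algebra, or more carefully one uses that $K(A)$ is generated as a hereditary-subalgebra/ideal and the slice map is completely positive hence order-preserving), it carries all of $K(A)$ into $K(B)$. To make the bimodule step clean I would instead invoke Proposition~\ref{prop:compactly.generated}: it suffices to show the slice map sends elements that are compact-with-respect-to-something into $K(B)$, and $a$ compact w.r.t.\ $a'$ in $A$ forces $f\tensor a$ compact w.r.t.\ $e'\tensor a'$ in $D\betatensor A$ for a suitable $e'$, whence $\rho(f\tensor a)$ is compact w.r.t.\ $\rho(e'\tensor a')$ in $B$, hence in $K(B)$.

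For the case where $0$ is isolated in the spectrum of $|f|$, I would let $g\in C_0((0,1])$ (rescaling so $\|f\|\le1$) be the characteristic function of a neighbourhood of $\mathrm{spec}(|f|)\setminus\{0\}$; since $0$ is isolated, $g$ is continuous on the spectrum, vanishes at $0$, and $g$ acts as the identity on $f$, i.e.\ $g(|f|)$ is a projection in $D\dd$ lying in $D$ and $f g(|f|)=f$. Then exactly as in the proof of Theorem~\ref{th:cpoz} we write, for $a$ positive in $K(A)$, $f\tensor a=(g(|f|)\tensor 1)(f\tensor a)$ in the appropriate multiplier sense, so $\rho(f\tensor a)=\rho(g(|f|)\tensor a^{1/2})\,\rho(f\tensor 1)\,\rho(g(|f|)\tensor a^{1/2})$-type manipulation — more simply, the map $a\mapsto\rho(g(|f|)\tensor a)$ is a *-homomorphism (as $g(|f|)$ is a projection commuting appropriately), hence sends $K(A)$ into $K(B)$ by Proposition~\ref{prop:hom}, and $\rho(f\tensor a)=\rho(f\tensor a)$ is obtained from an element of $K(B)$ by multiplying on both sides by the fixed multiplier $\rho(h(|f|)\tensor 1)$ where $h$ satisfies $h\cdot g=f$ as functions; since $K(B)$ is a two-sided ideal, $\rho(f\tensor a)\in K(B)$.

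The main obstacle I anticipate is the bookkeeping around multiplier algebras: ``$g(|f|)\tensor 1$'' does not literally live in $D\betatensor A$ when $A$ is nonunital, so one must phrase the projection/section argument inside $\mathcal M(D\betatensor A)$ or, better, localize to a hereditary subalgebra $\closure{(g(|f|)\tensor a)(D\betatensor A)(g(|f|)\tensor a)}$ exactly as Theorem~\ref{th:cpoz} and Proposition~\ref{prop:compactly.generated} do, and check that $\rho$ restricted there is a genuine *-homomorphism with the right range. Once that is set up, both hypotheses collapse to: ``$f$ admits, inside $D$ or inside a hereditary subalgebra of $D$, an element behaving as a unit on $f$'', which is precisely the input that makes $f\tensor a\in K_{00}^+$-generated-ideal, and then Proposition~\ref{prop:hom} finishes the argument. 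I would present the compact case first (cleaner, via Proposition~\ref{prop:compactly.generated}) and then remark that the isolated-zero case reduces to it because then $f$ is itself compact with respect to $g(|f|)$, so the two bullets of the hypothesis are not really separate cases at all.
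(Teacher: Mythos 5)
Your overall strategy is sound and genuinely different from the paper's, but two of your finishing steps need repair. The paper argues as follows: restrict $\rho$ to $C^*(f)\betatensor A$ (resp.\ to $\closure{fDf}\betatensor A$ when $f$ is compact), observe that the first tensor factor is unital, so that by Theorem \ref{prop:pedtensor} the Pedersen ideal of the restricted domain already contains $f\otimes K(A)$, and then quote Proposition \ref{prop:hom}. You instead work directly with Pedersen's $K_{00}^+$ picture and exhibit local units: $e\otimes b$ acting as a unit on $f\otimes a$ in the compact case, and the support projection $g(|f|)\in C^*(f)$ in the isolated-zero case. In the compact case the paper's route is shorter, because when $f\ge 0$ and $fe=f$ with $e\in\closure{fDf}$, the element $e$ is a genuine unit for $\closure{fDf}$ and no $K_{00}^+$ bookkeeping is needed. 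In the isolated-zero case, however, your mechanism is the more robust one: for non-normal $f$ only the one-sided support projections $g(|f|)$ and $g(|f^*|)$ are guaranteed to lie in $C^*(f)$, and $C^*(f)$ itself need not be unital (consider a direct sum of $2\times 2$ partial isometries whose source and range projections make angles tending to zero: $0$ is isolated in the spectrum of $|f|$, yet no element of $C^*(f)$ can act as a unit), so the unitality of $C^*(f)$ invoked by the paper is exactly the delicate point that your use of the single relation $f\,g(|f|)=f$ sidesteps.

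The repairs. (i) Compact case, passage from $K_{00}^+(A)$ to $K(A)$: drop the multiplier/bimodule argument and argue by domination. If $x\in K(A)^+$ then $x\le\sum x_i$ with $x_i\in K_{00}^+(A)$, hence $f\otimes x\le\sum f\otimes x_i$ (positivity of $f$ is used here), and each $f\otimes x_i$ lies in $K_{00}^+(C^*(f)\betatensor A)$ by your computation; since the cone of such dominated elements is hereditary, $f\otimes x$ is in the Pedersen ideal of $C^*(f)\betatensor A$, and Proposition \ref{prop:hom} plus linearity finishes. (ii) Isolated-zero case: the sentence about $h$ with ``$h\cdot g=f$ as functions'' does not make sense, since $f$ is not a function of $|f|$ unless $f$ is normal; and $\rho(h(|f|)\otimes 1)$ lives nowhere without a multiplier extension. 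Replace this by a factorization inside the algebra: for $a\in K(A)^+$, Proposition \ref{prop:PedersenAlgebras} gives $a^{1/3},a^{2/3}\in K(A)$, and $f\otimes a=(f\otimes a^{2/3})(g(|f|)\otimes a^{1/3})$ because $f\,g(|f|)=f$; the map $a\mapsto\rho(g(|f|)\otimes a)$ is a *-homomorphism, as you note, so $\rho(g(|f|)\otimes a^{1/3})\in K(B)$ by Proposition \ref{prop:hom}, and since $K(B)$ is an algebraic two-sided ideal, $\rho(f\otimes a)\in K(B)$; no multipliers are needed. Finally, your closing remark that the isolated-zero case is literally a sub-case of the compact one is imprecise: Definition \ref{def:compact} concerns positive elements, and here $f$ need not be positive; the hypothesis only yields a one-sided local unit for $f$, which is why the asymmetric factorization above is the correct substitute.
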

\begin{proof}
Suppose that that 0 is isolated in the spectrum of the given
element $|f|.$ Then $C^*(f)$ is unital. We may restrict the given homomorphism
to $\rho\colon C^*(f) \betatensor A \rightarrow B.$ Since $C^*(f)$  is unital, by Theorem 
\ref{prop:pedtensor}.(3) the Pedersen ideal of $C^*(f) \betatensor A$
contains $C^*(f) \betatensor K(A).$ 
Then by Proposition \ref{prop:hom} this is mapped by $\rho$ into the Pedersen ideal of
$B,$ and thus we have shown the first case. 
In the second case, namely that $f$ is compact, it follows from Definition \ref{def:compact} that the hereditary
subalgebra generated by $f$ in $D$ is unital. We then restrict the given
homomorphism to $\rho\colon \overline{fDf}\betatensor A \rightarrow B,$ 
and proceed similarly. \end{proof}
The above shows that techniques used to study Cuntz semigroups are also useful with the Pedersen ideal. 
To show that the orthogonality condition is not necessary, we give one more result:
\begin{proposition}Suppose that $t$ is a \lsc trace of $A.$ Then, the slice map $t\tensor \Id\colon A\betatensor B \rightarrow B$ maps the Pedersen ideal of $A\betatensor B$ into the Pedersen ideal of $B.$
\label{prop:tracial.slices}\end{proposition}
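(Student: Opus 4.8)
The plan is to reduce to a single positive element, to squeeze it below an elementary tensor built from the Pedersen ideals of the two factors, and then to pass to a corner of $A$ on which $t$ becomes bounded.

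First, since $K(\AbB)$ is the linear span of its positive part and $t\tensor\Id$ is linear, it suffices to treat a positive $x\in K(\AbB)$. Suppose for the moment that we have found $c\in\Ped(A)\pos$ and $d\in\Ped(B)\pos$ with $x\le c\tensor d$. By Theorem \ref{prop:pedtensor}.(1), $c\tensor d\in\Ped(\AbB)$, and by Proposition \ref{prop:PedersenAlgebras} the hereditary subalgebra it generates --- which contains $x$ --- lies inside $K(\AbB)$; moreover that hereditary subalgebra is contained in $\overline{cAc}\betatensor B$. By Corollary \ref{cor:restriction}, $t$ restricts to a \emph{bounded} trace on $\overline{cAc}$, so $t\tensor\Id$ is an honest slice map on $\overline{cAc}\betatensor B$, completely positive and in particular order preserving (a slice by a bounded positive functional is cp and bounded for every $C^*$-tensor norm, since the minimal norm is dominated by $\beta$). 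Applying it to $x\le c\tensor d$ gives $(t\tensor\Id)(x)\le(t\tensor\Id)(c\tensor d)=t(c)\,d$; since $c\in K(A)$ and $t$ is densely finite the scalar $t(c)$ is finite, so $t(c)\,d\in K(B)\pos$, and as the positive cone of the Pedersen ideal is hereditary we conclude $(t\tensor\Id)(x)\in K(B)\pos$.

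So the whole statement reduces to a domination lemma: every positive element of $K(\AbB)$ is majorised by some $c\tensor d$ with $c\in\Ped(A)\pos$, $d\in\Ped(B)\pos$. Here I would argue as follows. By Pedersen's description of $K(\AbB)\pos$ we may write $x\le\sum_j z_j$ with each $z_j$ satisfying $z_jz_j'=z_j$ for some $z_j'\in\AbB$; it then suffices to dominate each $z_j$, and, since for positive $a_i,b_i$ the cross terms $a_i\tensor b_j$ $(i\ne j)$ are positive, a bound $z_j\le\sum_i a_i\tensor b_i$ with $a_i\in\Ped(A)\pos$, $b_i\in\Ped(B)\pos$ upgrades automatically to $z_j\le(\sum_i a_i)\tensor(\sum_i b_i)$. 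Fix $z:=z_j$. It is ``compact with respect to $z'$'' in the sense of Definition \ref{def:compact}, hence lies in $\Ped(\AbB)$, and Corollary \ref{cor:ped.approx} applied to $z$ yields $y\in\Ped(A)\pos\algtensor\Ped(B)\pos$ with $y\le z$ and $\|z-y\|$ as small as we please. The remaining --- and, I expect, genuinely delicate --- step is to promote this approximation \emph{from below} to a scalar estimate $z\le\lambda y$: one localises $z$ inside a corner $\overline{kAk}\betatensor\overline{k'Bk'}$ with $k\in\Ped(A)\pos$, $k'\in\Ped(B)\pos$, and then a spectral cut-down of $z$ against $y$ on that corner gives the bound, whence $z\le\lambda y\le c\tensor d$ as required. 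I expect the main obstacle to be exactly this localisation, because the witness $z'$ for $zz'=z$ is a priori an arbitrary element of $\AbB$ --- neither an elementary tensor nor a member of the Pedersen ideal --- so one must first replace it, using the identity $g(z)z'=g(z)$ for continuous $g$ vanishing at $0$ together with Pedersen's structure theory (cf.\ the viewpoint of Proposition \ref{prop:compactly.generated}), by a better-behaved witness sitting in $K(A)\algtensor K(B)$.
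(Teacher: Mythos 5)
Your strategy is genuinely different from the paper's, and unfortunately the lemma it rests on is false. The conditional part of your argument (given $x\le c\tensor d$ with $c\in\Ped(A)\pos$, $d\in\Ped(B)\pos$, pass to the corner $\overline{cAc}$, invoke Corollary \ref{cor:restriction}, and use that $K(B)\pos$ is hereditary) is fine, but the domination it needs does not exist. Take $A=B=\compact$, so that $K(A)$, $K(B)$ and $K(\AbB)$ are the finite-rank operators, and let $x=p_\xi$ be the rank-one projection onto the unit vector $\xi=\sqrt{3}\sum_{n\ge1}2^{-n}\,e_n\tensor f_n$, which has infinite Schmidt rank. If $x\le\sum_{i=1}^{N}a_i\tensor b_i$ with $a_i\in K(A)\pos$, $b_i\in K(B)\pos$, then the range of $x$, hence $\xi$ itself, lies in $V\tensor W$ with $V:=\mathrm{span}\bigcup_i\mathrm{ran}\,a_i$ and $W:=\mathrm{span}\bigcup_i\mathrm{ran}\,b_i$ finite dimensional, forcing the Schmidt rank of $\xi$ to be at most $\dim V$; contradiction. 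So no finite sum of elementary tensors from the Pedersen ideals, and a fortiori no single $c\tensor d$, dominates $x$, even though $x$ is a projection and hence lies in $K_{00}^+(\AbB)$. The step you flagged as delicate is therefore not merely delicate but impossible. Independently of this, the promotion of a minorant $y\le z$ with $\|z-y\|$ small to an estimate $z\le\lambda y$ fails whenever $0$ is a limit point of the spectrum of $z$ inside the ambient corner (e.g. $z(s)=s$ and $y=(z-\varepsilon)_+$ in $C[0,1]$); possession of a local unit does not rule this out.

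By contrast, the paper's proof uses no upper bounds at all: it observes that $t\tensor\sigma$ is a \lsc \seminorm on $\AbB$ for every \lsc \seminorm $\sigma$ on $B$, hence finite on $K(\AbB)$, and then controls the sliced approximants $\sum t(a_i)b_i$ coming from the approximation from below of Corollary \ref{cor:ped.approx}, whose $\sigma$-values increase and are bounded. You should be aware, though, that the example above also stress-tests the statement itself and the paper's final step: with $t$ the canonical trace on $\compact$, the slice $(t\tensor\Id)(p_\xi)=3\sum_n 4^{-n}p_{f_n}$ is a trace-class operator of infinite rank, hence not finite rank, while finiteness of every \lsc \seminorm at a positive element of $\compact$ characterizes the trace-class positives rather than $K(\compact)\pos$. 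So a correct argument must deliver strictly more than finiteness of seminorm values at the image, and the domination you sought cannot be the missing ingredient.
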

\begin{proof} The tensor product of a \lsc trace and a \lsc \seminorm is readily seen to be a \lsc \seminorm on $A\betatensor B.$ 
 But then if $w$ is a positive element of the Pedersen ideal of $A \betatensor B,$ so that corollary \ref{cor:ped.approx} provides approximations from below of the form 
$\sum_1^n a_i\tensor b_i,$ where the terms $a_i$ and $b_i$ come from the Pedersen ideals of the tensor factors, we note that when we apply a \seminorm of $B$ to the image $\sum_1^n t(a_i) b_i$ of this sequence under the slice map, we obtain a sequence that  is increasing and bounded above, hence convergent. Thus, the image $(t\tensor \Id) (w)$ of $w$ under the slice map is in the Pedersen ideal of $b.$ 
\end{proof}
The above slice map is completely positive but is not generally orthogonality-preserving, leading to a question:
\begin{question} What  class of completely positive maps $\phi\colon A\to B$ does in general map the Pedersen ideal of $A$ into the Pedersen ideal of $B$?
\end{question}
In some cases we can prove a very strong property: that the Pedersen ideal is itself stable under Cuntz equivalence.  What we prove precisely is that the property of being a positive element of the Pedersen ideal is preserved under Cuntz equivalence in the stable rank 1 case.  This case arises because then a normally finer equivalence relation, denoted $a\blackadar b,$ becomes the same as Cuntz equivalence. 
We need some preliminaries. 

We recall an equivalence relation on positive elements of a C*-algebra, $A,$ that was considered by Blackadar  \cite{blackadar88}, and is denoted here by $a\blackadar b.$ This equivalence relation is generated by the following two equivalence relations:
\begin{enumerate}
	\item positive elements $a$ and $b$ are equivalent in a C*-algebra $A$ if they generate the same hereditary subalgebra of $A,$ and
\item
positive elements $a$ and $b$ are equivalent in a C*-algebra $A$ if there is an element $x\in A$ such that $a=x^*x$ and $b=xx^*.$ \end{enumerate}

 We say that $a$ is Cuntz subequivalent to $b,$ denoted $a \leqCu b$, if there is a sequence $x_n$ such that $x_n^* b x_n$ goes to $a$ in norm. Thus, for example, $e^*xe\leqCu x.$ We say that $a$ and $b$ are Cuntz equivalent, and write $a\ce b,$ if we have both $a \leqCu b$ and $a \geqCu b.$ 

The following known Lemma relates the equivalence relation $\blackadar$ on positive elements to Cuntz equivalence and to properties of the Hilbert submodules of $A$ that are generated by the given positive elements:
\begin{lemma}Let $A$ be a C*-algebra, and let $a$ and $b$ be positive elements of $A.$ The following are equivalent:
\begin{enumerate}\item $a\blackadar b,$ and
\item $\overline{aA}$ and $\overline{bA}$ are isomorphic as Hilbert $A$-modules.

\item If  $A$ has stable rank 1, then the above is equivalent to  $a\ce b$ in the Cuntz semigroup. 
\end{enumerate}\label{lem:isos}
\end{lemma}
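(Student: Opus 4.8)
The plan is to prove the three-way equivalence by establishing $(1)\Leftrightarrow(2)$ in full generality and then $(1)\Leftrightarrow(3)$ under the stable rank one hypothesis, reducing as much as possible to standard facts about Hilbert modules and Cuntz comparison. First I would unwind the definition of $\blackadar$: since $\blackadar$ is the equivalence relation \emph{generated} by the two elementary relations, it suffices to check that the Hilbert-module isomorphism class of $\overline{aA}$ is invariant under each of the two generating moves, and conversely that a module isomorphism can be realized by a finite chain of such moves. For the first generating relation, $a$ and $b$ generating the same hereditary subalgebra forces $\overline{aA}=\overline{bA}$ as submodules of $A$, hence certainly isomorphic. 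For the second, if $a=x^*x$ and $b=xx^*$ then the map $a^{1/2}\xi\mapsto x\xi$ (defined first on the dense submodule and then extended) is the polar-type isometry implementing $\overline{aA}\cong\overline{bA}$; this is the standard fact that $\overline{x^*xA}\cong\overline{xx^*A}$ via $x$ acting as a module map. So each generator preserves the module isomorphism class, giving $(1)\Rightarrow(2)$.

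For the converse $(2)\Rightarrow(1)$, given a Hilbert $A$-module isomorphism $u\colon\overline{aA}\to\overline{bA}$, I would use the fact that $u$ is adjointable with $u^*u=1_{\overline{aA}}$, $uu^*=1_{\overline{bA}}$, and set $x:=u(a^{1/2})\in\overline{bA}\subseteq A$. Then $x^*x=\langle u(a^{1/2}),u(a^{1/2})\rangle=\langle a^{1/2},a^{1/2}\rangle=a$ and $xx^*$ lies in $\overline{bA}$ and generates the same hereditary subalgebra as $b$ (since $x$ generates $\overline{bA}$ as a module, $u$ being surjective). Hence $a\blackadar xx^*\blackadar b$ using the second generator once and then the first generator once. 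This is the classical identification, essentially Blackadar's original observation, so the argument is a matter of assembling known pieces rather than anything new.

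For $(3)$, the statement is that in the stable rank one case $\blackadar$ coincides on positive elements with Cuntz equivalence $\ce$. One direction, $a\blackadar b\Rightarrow a\ce b$, holds without any rank hypothesis: both generating relations are easily seen to imply Cuntz equivalence (equal hereditary subalgebras give mutual Cuntz subequivalence, and $x^*x\ce xx^*$ is standard, e.g.\ from $x^*x\leqCu xx^*$ by approximating with $x^*x\cdot(\text{functional calculus})$ and symmetrically). The substantive direction is $a\ce b\Rightarrow a\blackadar b$ in stable rank one, which is exactly the content of the Cuntz--Pedersen / R\o rdam-type theorem that in a stable rank one C*-algebra, $a\ce b$ implies there is a unitary $w$ in the unitization (or a sequence of such) with $w\overline{aA}=\overline{bA}$, hence $\overline{aA}\cong\overline{bA}$ as modules, which is $(2)$, hence $(1)$. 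I would cite the relevant result; in the separable unital case this is R\o rdam's theorem that stable rank one implies Cuntz equivalence is implemented by unitaries, and in general one passes to the hereditary subalgebra or the stabilization, noting these remain stable rank one.

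The main obstacle I anticipate is the technical care needed in the $(2)\Rightarrow(1)$ step to ensure the module isomorphism $u$ can be taken adjointable and to identify $x=u(a^{1/2})$ as a genuine element of $A$ with the stated polar decomposition properties — Hilbert module isomorphisms of the form $\overline{aA}$ are automatically adjointable (they are unitaries between full corners-type modules), but one should be explicit about why. Secondarily, in part $(3)$ the quoted deep input — that stable rank one upgrades Cuntz equivalence to spatial/unitary equivalence of the associated hereditary subalgebras — is where all the real work sits, and the proof will lean entirely on citing that result correctly rather than reproving it; the remaining glue (transporting between the module picture and the $\blackadar$ picture via parts $(1)$ and $(2)$) is routine.
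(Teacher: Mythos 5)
Your argument is correct in substance, but it takes a different route from the paper, which disposes of the lemma by pure citation: the equivalence of (1) and (2) is quoted as Proposition 4.3 of Ortega--R\o rdam--Thiel, and the equivalence with (3) under stable rank one is quoted from Coward--Elliott--Ivanescu. What you do differently is to reprove the (1)$\Leftrightarrow$(2) part from scratch: checking that each of the two generating relations of $\blackadar$ preserves the isomorphism class of $\overline{aA}$ (equal hereditary subalgebras give literally the same right ideal, and $a=x^*x$, $b=xx^*$ give the isometric module map $a^{1/2}\xi\mapsto x\xi$), and conversely extracting $x:=u(a^{1/2})$ from a unitary module isomorphism $u$ so that $x^*x=a$ while $xx^*$ generates the same hereditary subalgebra as $b$. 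This is essentially the content of the cited ORT proposition, so your version buys self-containedness at the cost of the adjointability/unitarity bookkeeping you yourself flag (harmless under the standard convention that Hilbert-module isomorphism means a unitary module map, or after a polar decomposition). For (3) you, like the paper, must lean on the deep stable-rank-one input; one caution is that your phrasing of it is slightly too strong: stable rank one and $a\ce b$ do not in general produce a single unitary $w$ with $w\overline{aA}=\overline{bA}$ (only approximate versions of such statements hold), and the correct citable form of the Coward--Elliott--Ivanescu theorem is precisely that $a\ce b$ is equivalent to $\overline{aA}\cong\overline{bA}$ as Hilbert modules, which feeds directly into your (2)$\Leftrightarrow$(1) step without any unitary detour. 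The easy direction $a\blackadar b\Rightarrow a\ce b$ without any rank hypothesis is fine as you state it.
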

\begin{proof} The equivalence of the first two is \cite[Prop. 4.3]{ORT}. The equivalence of the last two, in the presence of the stable rank 1 property, is \cite{CEI}.
\end{proof}

By $A\tensor\mathcal H$ we mean Kasparov's standard Hilbert module over $A$, see  \cite[pgs. 34-5]{lance}.
\begin{theorem}Let $A$ be a C*-algebra, and let $a$ and $b$ be positive elements of $A\tensor\compact.$ If $A$ is of stable rank 1, then if $a$ and $b$ are are Cuntz equivalent, then if $a$ is in the Pedersen ideal of $A\tensor\compact$, so is $b$. Without stable rank 1, we have that if $\overline{a(A\tensor \mathcal H)}$ and $\overline{b(A\tensor\mathcal H)}$ are isomorphic as Hilbert $A$-modules, then if $a$ is in the Pedersen ideal of $A\tensor\compact$, so is $b$.  
\end{theorem}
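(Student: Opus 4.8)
The plan is to reduce everything to the Hilbert-module statement and then invoke Proposition~\ref{prop:compactly.generated}. The key observation is that membership in $\Ped(A\tensor\compact)$ can be detected at the level of the Hilbert module $\overline{a(A\tensor\mathcal H)}$: by Proposition~\ref{prop:compactly.generated}, $a$ lies in (the hereditary subalgebra generating) the Pedersen ideal precisely when $a$ is dominated by a finite sum of elements that are compact with respect to some element, i.e.\ when some positive $e$ in $A\tensor\compact$ acts as a unit on $a$ up to the hereditary/Cuntz structure. So first I would show: if $\overline{a(A\tensor\mathcal H)}\cong\overline{b(A\tensor\mathcal H)}$ as Hilbert $A$-modules and $a\in\Ped(A\tensor\compact)$, then $b\in\Ped(A\tensor\compact)$. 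For this, recall (Lemma~\ref{lem:isos}) that such a module isomorphism is exactly $a\blackadar b$, the Blackadar equivalence, which is generated by the two elementary moves: (i) $a$ and $b$ generate the same hereditary subalgebra, and (ii) $a=x^*x$, $b=xx^*$ for some $x$. It therefore suffices to check that the property ``positive element of $\Ped(A\tensor\compact)$'' is preserved under each of these two moves.

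For move (i), if $a$ and $b$ generate the same hereditary subalgebra $B_0:=\overline{a(A\tensor\compact)a}=\overline{b(A\tensor\compact)b}$, then $b\in B_0$, and by Proposition~\ref{prop:PedersenAlgebras} the hereditary subalgebra generated by $a\in K(A\tensor\compact)$ is contained in the Pedersen ideal; hence $b\in K(A\tensor\compact)$, and since $b$ is positive it lies in the positive cone, so $b\in K_0^+(A\tensor\compact)$ after passing to the range-projection description (or directly: $b\in\Ped$ since $[b]\leq[a_c]$ for whichever majorant $a_c$ witnesses $a\in\Ped$, as $[b]=[a]$). For move (ii), suppose $a=x^*x$ and $b=xx^*$ with $x\in A\tensor\compact$, and $a\in\Ped(A\tensor\compact)$. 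Here I would use Pedersen's characterization invoked in Proposition~\ref{prop:compactly.generated}: since the Pedersen ideal is a two-sided algebraic ideal of $A\tensor\compact$ (Lemma~1.1 of \cite{Pedersen1966}), from $a\in K(A\tensor\compact)$ and $b=xx^*=x\,(x^*x)^{0}\cdots$ — more carefully, $x$ has polar-type decomposition giving $b=xx^*$ with $x=u|x|$ in $(A\tensor\compact)^{**}$ and $|x|^2=a\in K$; then $|x|\in K$ by Proposition~\ref{prop:PedersenAlgebras} applied to $a$ (as $|x|=a^{1/2}$ lies in the hereditary subalgebra generated by $a$), and $b=u|x|\,|x|u^*$, so for any approximate unit $e_n$ of $\overline{a(A\tensor\compact)a}$ one has $xe_n\to x$ in norm with $xe_n\in K(A\tensor\compact)$ (ideal property, since $e_n\in K$), hence $x\in\overline{K(A\tensor\compact)}$; but a cleaner route is to note $b=xx^*$ generates the hereditary subalgebra $\overline{x(A\tensor\compact)x^*}$, and $x^*x=a\in K$ forces $x\in K(A\tensor\compact)$ because $x=x(x^*x)^{1/3}\cdot(\cdots)$ via Cohen factorization in the hereditary subalgebra of $a$, whence $b=xx^*\in K$ by the ideal property, and positivity plus $[b]\leq[x^*x]$-type majorization puts $b$ in $\Ped$. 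Either way the move preserves $\Ped$.

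Having handled the generators, the equivalence relation $\blackadar$ is the transitive closure, so the property propagates along finite chains, establishing the Hilbert-module (second) assertion of the theorem in full generality. The stable-rank-$1$ (first) assertion is then immediate: by Lemma~\ref{lem:isos}.(3), in the presence of stable rank $1$ the relation $a\ce b$ in the Cuntz semigroup coincides with $a\blackadar b$, which by Lemma~\ref{lem:isos}.(2) is the module isomorphism $\overline{a(A\tensor\mathcal H)}\cong\overline{b(A\tensor\mathcal H)}$ — one should note here that $A\tensor\mathcal H$ is the standard module and $\overline{a(A\tensor\compact)}$ as a Hilbert module matches $\overline{a(A\tensor\mathcal H)}$ under the standard identification, so the statements line up — and then the second assertion applies.

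The main obstacle I anticipate is move (ii): one must be careful that $x$ itself (not merely $a=x^*x$ and $b=xx^*$) lands in the Pedersen ideal, since the Pedersen ideal is not norm-closed and $x$ need not a priori be expressible from $a$ by a continuous functional calculus. The resolution is Cohen/Pedersen factorization inside the hereditary subalgebra generated by $a$: writing $x=x\cdot g(a)$ for suitable $g\in C_0((0,1])$ with $g(0)=0$, together with Proposition~\ref{prop:PedersenAlgebras} (which gives $g(a)\in K$) and the two-sided ideal property of $K(A\tensor\compact)$, forces $x\in K(A\tensor\compact)$ and hence $b=xx^*\in K(A\tensor\compact)^+$; a final check that being in $K^+$ actually means being in $\Ped$ (via the range-projection majorization $[b]=[a]\le a_c$) completes this step. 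Everything else is bookkeeping with the definitions already recalled in the excerpt.
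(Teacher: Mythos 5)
Your proof is correct and takes essentially the same route as the paper: reduce via Lemma \ref{lem:isos} to showing that the relation $\blackadar$ preserves membership in $K(A\tensor\compact)$, handle the same-hereditary-subalgebra move with Proposition \ref{prop:PedersenAlgebras}, and handle the $a=x^*x$, $b=xx^*$ move; the only difference is that where the paper simply cites Proposition 5.6.2 of \cite{Pedersen} for that second move, you reprove it via the factorization $x=u\,a^{1/3}$ (minor slip: Cohen/Pedersen factorization gives $x=y\,g(a)$ for some new element $y$, not $x=x\,g(a)$) combined with the two-sided ideal property of $K$. Your worried ``final check'' that $[b]=[a]\leq a_c$ is unnecessary (and indeed $[b]=[a]$ generally fails for the second move), since the theorem only asserts membership in the Pedersen ideal $K(A\tensor\compact)$, which your argument already delivers.
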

\begin{proof} In view of lemma \ref{lem:isos}, it is sufficient to prove that if  $a\blackadar b,$ and if $a$ is in the Pedersen ideal of $A\tensor\compact$,  then $b$ is in the Pedersen ideal.
Now, the equivalence relation $\blackadar$ is generated by two equivalence relations, which we consider one at a time. First, we have the equivalence relation given by the condition that two positive elements generate the same hereditary subalgebra. Suppose that, as above, one of them is in fact in the Pedersen ideal $K(A\tensor\compact).$ But proposition \ref{prop:PedersenAlgebras} shows that then the entire hereditary subalgebra is in the Pedersen ideal $K(A\tensor\compact),$ so then  a generator of the hereditary subalgebra is in the Pedersen ideal.  
Secondly, consider the equivalence relation where  positive elements $a$ and $b$ are equivalent in a C*-algebra $A$ if there is an element $x\in A$ such that $a=x^*x$ and $b=xx^*.$ But then, for example, by proposition 5.6.2 in \cite{Pedersen}, if $a$ in the Pedersen ideal, then so is $b.$ 
 This completes the proof that if  $a\blackadar b,$ and $a$ is in the Pedersen ideal of $A\tensor\compact$,  then $b$ is in the Pedersen ideal.
\end{proof}

\begin{remark} It follows that in the stable rank 1 case, elements of a C*-algebra which are in the class of a projection in the Cuntz semigroup are contained in the Pedersen ideal. Generally there are also purely positive elements in the Pedersen ideal, as can be seen from  proposition \ref{prop:compactly.generated}.  \end{remark}
The next result would be evident if $c_1$ and $c_2$ were simple tensors, and the bidual were of finite type. Hence, this result suggests that  open projections of a tensor product behave somewhat like simple tensors.

\begin{corollary}Let $C:=\AbA$ be a separable and stably finite C*-tensor product, with almost unperforated Cuntz semigroup.
Let $c_1$ and $c_2$ be positive elements of $C.$ Then the open projection associated with $c_1$ is Murray-von Neumann subequivalent to the open projection associated with $c_2$ if and only if $\lim_{n\arrow\infty} (\tau_1 \tensor \tau_2)(c^{1/n}_1)  \leq 
              \lim_{n\arrow\infty} (\tau_1 \tensor \tau_2)(c^{1/n}_2)  $
							for each $\tau_1\in LTC(A_1)$ and $\tau_2\in LTC(A_2).$
 \end{corollary}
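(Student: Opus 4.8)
The statement is essentially an instance of the comparison-by-traces theorem (of the Blackadar–Handelman / Robert type) for almost unperforated Cuntz semigroups, combined with the identification of the cone of lower semicontinuous traces of the tensor product supplied by theorem \ref{th:LTC}. So the proof proceeds in two movements: first, reduce the Murray--von Neumann subequivalence of open projections of $c_1,c_2$ to Cuntz subequivalence $c_1\leqCu c_2$ in $\Cu(C)$; second, translate Cuntz subequivalence into the trace inequality via the rank functions coming from the densely finite \lsc traces, using that the ambient algebra is separable, stably finite, and has an almost unperforated Cuntz semigroup.

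For the first movement I would argue as follows. The open projection associated with a positive element $c$ is its range projection $[c]$ in $C\dd$, and $[c_1]$ is Murray--von Neumann subequivalent to $[c_2]$ precisely when $\closure{c_1 C}$ embeds as a Hilbert $C$-module in $\closure{c_2 C}$, equivalently (for countably generated modules over a separable algebra, after stabilizing) $a\blackadar$-subequivalence of the corresponding elements of $C\tensor\compact$. By lemma \ref{lem:isos}(2)-(3), in the stable rank 1 case this is the same as $c_1\leqCu c_2$; in general, stable finiteness plus almost unperforation is exactly the input that lets one replace the potentially finer module-subequivalence with Cuntz subequivalence in $\Cu(C)$ via the standard ``strict comparison'' package. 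So after this reduction the left-hand side of the ``iff'' becomes ``$c_1\leqCu c_2$ in $\Cu(C)$.''

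For the second movement, recall that for a positive element $c$ and a densely finite \lsc trace $\tau$ on $C$, the quantity $d_\tau(c):=\lim_{n\to\infty}\tau(c^{1/n})$ is the associated dimension (rank) function, and it is \lsc and additive on $\Cu(C)$. The theorem of Blackadar--Handelman type (see, e.g., the form in \cite{ERS}) says that when $\Cu(C)$ is almost unperforated, $c_1\leqCu c_2$ holds if and only if $d_\tau(c_1)\le d_\tau(c_2)$ for all densely finite \lsc traces $\tau$ of $C$ — here one uses that $c_1,c_2$ lie in $C$ rather than only in $C\tensor\compact$, so that the relevant traces are genuinely the densely finite \lsc ones and no 2-quasitrace subtlety arises (separability and stable finiteness give the quasitrace $=$ trace conclusion). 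Finally, corollary \ref{cor:TChomeo} together with theorem \ref{th:LTC} identifies $LTC(C)$ with the closure of $\{\tau_1\tensor\tau_2\}$ in the Pedersen topology; since $d_\tau(c)$ for fixed $c$ is, by the $\tau$-density theorem \ref{prop:pedtensor} and the \lsc of the rank function, a quantity determined by and jointly continuous enough on that cone, the inequality $d_\tau(c_1)\le d_\tau(c_2)$ for all $\tau\in LTC(C)$ is equivalent to its restriction to the simple tensors $\tau=\tau_1\tensor\tau_2$. This yields the stated biconditional.

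\medskip

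\textbf{The main obstacle.} The delicate step is the passage between the three notions of comparison: Murray--von Neumann subequivalence of the open projections $[c_i]$ in $C\dd$, Hilbert-module subequivalence of $\closure{c_i C}$, and Cuntz subequivalence in $\Cu(C)$. These coincide for stable rank 1 by lemma \ref{lem:isos}, but here stable rank 1 is not assumed; one only has separable, stably finite, almost unperforated. So one must invoke the strict-comparison theorem to bridge the gap — and be careful that ``Murray--von Neumann subequivalence of open projections'' is the correct von Neumann algebra statement, which for range projections of elements of a separable C*-algebra does coincide with module subequivalence after passing to $C\tensor\compact$. I would expect to spend most of the write-up making this equivalence precise and citing the exact form of strict comparison that applies (e.g. Rørdam's almost-unperforation result, or the formulation in \cite{ERS}); the trace-side identification via theorems \ref{cor:TChomeo} and \ref{th:LTC} is then essentially formal, given the \lsc and additivity of $d_\tau$.
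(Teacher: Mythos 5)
Your reduction of the left-hand side to Cuntz subequivalence is the flaw, and it is not a repairable omission of detail: under the stated hypotheses (separable, stably finite, almost unperforated Cuntz semigroup --- no simplicity, no stable rank one, no restriction on the elements) Murray--von Neumann subequivalence of the open projections in $C\dd$ is genuinely coarser than $c_1\leqCu c_2.$ The standard example is a projection versus a ``soft'' element of the same rank: in a $\mathcal Z$-stable algebra with unique tracial state take $c_1$ a projection and $c_2$ purely positive with $d_\tau(c_1)=d_\tau(c_2)$; then the trace inequality holds and the open projections are MvN subequivalent, yet $c_1\not\leqCu c_2,$ since a compact Cuntz class cannot lie below a soft class of equal rank. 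The paper itself signals this: Corollary \ref{cor:ORT2} has to add simplicity and the hypothesis that the open projections do not lie in $C$ before Cuntz subequivalence can be extracted from MvN subequivalence. Your second bridge fails for the same reason: almost unperforation yields strict comparison, i.e.\ $c_1\leqCu c_2$ follows from a \emph{strict} inequality of the rank functions (suitably interpreted), not from the non-strict inequality that you invoke as a ``Blackadar--Handelman type'' equivalence; the same projection-versus-soft example refutes the non-strict version. So both of your movements are incorrect, even though the two errors point in compensating directions and the final biconditional happens to be true.

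The paper's proof never passes through comparison of $c_1,c_2$ in $\Cu(C)$ at all: Theorem 5.8 of \cite{ORT} states precisely that, for separable stably finite algebras with almost unperforated Cuntz semigroup, Murray--von Neumann subequivalence of the open projections is equivalent to $d_\tau(c_1)\le d_\tau(c_2)$ for all densely finite \lsc traces $\tau$ of $C$ --- the non-strict rank inequality is matched by open-projection comparison, not by Cuntz comparison. Theorem \ref{th:LTC} then reduces the test to the product traces $\tau_1\tensor\tau_2$ (the density/semicontinuity step that you correctly regard as routine). To repair your write-up, replace both movements by a direct appeal to (or a proof of) that result of \cite{ORT}.
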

\begin{proof} Combine Theorem \ref{th:LTC} above with \cite[Th. 5.8]{ORT}.\end{proof}

A relation with the Cuntz semigroup becomes more evident if we specialize the situation somewhat.
\begin{corollary}Let $C:=\AbA$ be a separable, simple, and stably finite C*-tensor product. Assume also that the Cuntz semigroup is almost unperforated.
Let $c_1$ and $c_2$ be positive elements of $C$ whose open projections are not contained in $C.$ Then  $c_1$ is Cuntz subequivalent to $c_2$ if and only if $\lim_{n\arrow\infty} (\tau_1 \tensor \tau_2)(c^{1/n}_1)  \leq 
              \lim_{n\arrow\infty} (\tau_1 \tensor \tau_2)(c^{1/n}_2)  $
							for each $\tau_1\in LTC(A_1)$ and $\tau_2\in LTC(A_2).$
\label{cor:ORT2} \end{corollary}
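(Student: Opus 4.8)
The plan is to reduce Corollary \ref{cor:ORT2} to the immediately preceding corollary, which handles the Murray-von Neumann subequivalence of open projections under exactly the same traciaal inequality, and then use simplicity to upgrade the comparison of open projections to Cuntz subequivalence of the original positive elements. First I would recall that for positive elements $c_1,c_2$ whose open projections (the range projections $[c_i]$ in $C^{**}$) are not contained in $C$, the associated open projections are, in the language of \cite{ORT}, the supremum in $\Cu(C)$ of the sequence $c_i^{1/n}$, and that the hypothesis that these open projections are not in $C$ says precisely that $c_1$ and $c_2$ are purely positive, i.e. not Cuntz-equivalent to a projection. So the content to extract from the previous corollary is that $[c_1]$ is Murray-von Neumann subequivalent to $[c_2]$ in $C^{**}$ if and only if the stated limit inequality holds for all $\tau_1\in LTC(A_1)$, $\tau_2\in LTC(A_2)$; by Theorem \ref{th:LTC} these product traces exhaust $LTC(C)$, so the inequality is equivalent to $d_\tau(c_1)\le d_\tau(c_2)$ for every densely finite \lsc trace $\tau$ on $C$, where $d_\tau$ is the associated dimension function.

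The key step is then the passage between Murray-von Neumann subequivalence of open projections and Cuntz subequivalence of positive elements. In the stable rank one case one has a clean statement (used already in the paper via Lemma \ref{lem:isos} and \cite{CEI}): Cuntz subequivalence of $c_1,c_2$ is detected by the Hilbert-module, hence open-projection, picture. But here we are not assuming stable rank one; instead we assume $C$ is separable, simple, stably finite, with almost unperforated Cuntz semigroup. Under these hypotheses $\Cu(C)$ is almost unperforated, and by the structure theory of such Cuntz semigroups (the strict comparison results, e.g. \cite[Th. 5.8 or its corollaries]{ORT} together with the fact that in a simple stably finite $C^*$-algebra the functionals on $\Cu(C)$ separate purely positive elements) Cuntz subequivalence of two \emph{purely positive} elements $c_1\leqCu c_2$ is equivalent to the inequality $d_\tau(c_1)\le d_\tau(c_2)$ for all $\tau\in LTC(C)$. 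The role of simplicity (beyond what was already used in the preceding corollary) is twofold: it guarantees that the \lsc dimension functions are faithful and that there are no nontrivial ideal obstructions, so that strict comparison of dimension functions is the \emph{only} obstruction; and it ensures that the open projection of a purely positive element is genuinely infinite-dimensional in every irreducible representation, so the Murray-von Neumann comparison of open projections in $C^{**}$ coincides with Cuntz comparison in $\Cu(C)$ for these elements. Concretely, I would invoke \cite[Th. 5.8]{ORT} a second time: it identifies Cuntz subequivalence of purely positive elements with the comparison of their images under all functionals on the Cuntz semigroup, which for a simple stably finite algebra are (up to scaling) exactly the dimension functions coming from $LTC(C)$.

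So the proof runs: (i) by Theorem \ref{th:LTC}, $\{\tau_1\tensor\tau_2\}$ is dense in, and in particular cofinal among, $LTC(C)$, so the displayed inequality over product traces is equivalent to $\lim_n\tau(c_1^{1/n})\le\lim_n\tau(c_2^{1/n})$ for all $\tau\in LTC(C)$, i.e. $d_\tau(c_1)\le d_\tau(c_2)$ for all such $\tau$; (ii) the previous corollary gives that this is equivalent to Murray-von Neumann subequivalence of the open projections $[c_1]\precsim[c_2]$; (iii) since $c_1,c_2$ are purely positive (open projections not in $C$), and $C$ is separable, simple, stably finite with almost unperforated $\Cu(C)$, \cite[Th. 5.8]{ORT} identifies $[c_1]\precsim[c_2]$ with $c_1\leqCu c_2$. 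Combining (i)--(iii) gives the stated equivalence. The main obstacle, and the step I would want to write out carefully, is (iii): justifying that for purely positive elements the Murray-von Neumann comparison of open projections in the bidual is the same as Cuntz comparison in $\Cu(C)$ without stable rank one, relying instead on strict comparison via almost unperforation and on simplicity to rule out ideal-theoretic obstructions and to guarantee faithfulness of the dimension functions; everything else is a routine translation through Theorem \ref{th:LTC} and the preceding corollary.
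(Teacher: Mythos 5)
Your overall plan coincides with the paper's: feed the displayed tracial inequality into the preceding corollary (which, via Theorem \ref{th:LTC} and \cite[Th. 5.8]{ORT}, converts it into Murray--von Neumann subequivalence of the open projections in the bidual), and then bridge from comparison of open projections to $c_1\leqCu c_2$ using simplicity and the hypothesis that the open projections do not lie in $C$; the converse direction is the easy one, since dimension functions are monotone under Cuntz subequivalence. The paper obtains the bridge in one line by quoting the remark ($3''$) in section 5 of \cite{ORT}, which asserts exactly that, under these hypotheses, Murray--von Neumann subequivalence of the open projections implies Cuntz subequivalence of the elements.

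The gap is in your step (iii), which you yourself flag as the crux. First, \cite[Th. 5.8]{ORT} does not identify Murray--von Neumann subequivalence of open projections with Cuntz subequivalence, nor does it characterize Cuntz subequivalence of purely positive elements by functionals: it is the open-projection versus dimension-function comparison theorem that has already been consumed in the preceding corollary, so citing it ``a second time'' does not supply the missing implication. Second, your proposed substitute --- strict comparison from almost unperforation together with the claim that the functionals on $\Cu(C)$ are (up to scaling) exactly the dimension functions coming from $LTC(C)$ --- is precisely the point where exactness would be required: functionals on the Cuntz semigroup correspond to \lsc $2$-quasitraces, and only for exact algebras (Haagerup \cite{Haagerup2014}, invoked by the paper in Proposition \ref{prop:Zstable} for exactly this purpose) are these known to come from traces. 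Exactness is not a hypothesis of this corollary, so an inequality over $LTC(C)$ does not obviously control all functionals on $\Cu(C)$, and your strict-comparison route does not close. The paper's argument avoids quasitraces entirely by staying in the bidual, where comparison of projections is governed by genuine tracial weights, and by citing the ORT remark for the passage back to Cuntz subequivalence; to repair your proof, replace the appeal to Th. 5.8 in step (iii) by that remark (or add exactness to the hypotheses and then argue through functionals as you propose).
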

\begin{proof}By remark $3''$ in section 5 of \cite{ORT}, with these hypotheses Murray von Neumann subequivalence of open projections implies Cuntz subequivalence of the elements $c_i.$ Thus this corollary follows from the more general one above. \end{proof}

\subsection{Tensor products of Cuntz-Pedersen spaces} 
Recall that two positive elements $x$ and $y$ are said to be equivalent in the Cuntz-Pedersen sense if $x=\sum a_{i}^{*} a_i $ and $y=\sum a_{i} a_i^{*}, $ where the sums are   norm-convergent or finite. Let us denote this form of equivalence by $\cp.$ 
 Cuntz and Pedersen defined a real Banach space $A^q$ that is the quotient of $A$ by the subspace $A_0$ of elements of the form $x-y,$ where $x\cp y.$ 

The finite-dimensional case of the following is \cite[Proposition 6.12]{CP}. The norm on $A^q\betatensor B^q$ is defined in the last two lines of the proof. 
\begin{corollary}Let $A$ and $B$ be C*-algebras. Then $(\AbB)^q = A^q\betatensor B^q.$ 
\label{cor:CP1}  \end{corollary}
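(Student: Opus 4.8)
The plan is to exploit the duality between the Cuntz--Pedersen space $A^q$ and the tracial cone $TC(A)$, together with the $\tau$-density theorem (Theorem~\ref{prop:pedtensor}) and its tracial consequence (Corollary~\ref{cor:TChomeo} and Theorem~\ref{th:LTC}). Recall that Cuntz and Pedersen showed that the finite traces on $A$ are exactly the positive elements of the dual of the ordered real Banach space $A^q$; dually, $A^q$ can be reconstructed from the pair $(A_{\mathrm{sa}}, A_0)$, where $A_0$ is the closed real subspace generated by the differences $x-y$ with $x\cp y$. So the first step is to pin down the statement precisely: define the norm on $A^q\betatensor B^q$ to be the quotient norm coming from the image of $A_{\mathrm{sa}}\algtensor B_{\mathrm{sa}}$ (or its completion in $(\AbB)_{\mathrm{sa}}$) modulo the closed subspace generated by the Cuntz--Pedersen null space, and then the claim is that the natural map $A^q\betatensor B^q\to(\AbB)^q$ is an isometric order isomorphism.

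The main line of argument I would run is: first show the map is well-defined and contractive, i.e.\ that $A_0\algtensor B_{\mathrm{sa}} + A_{\mathrm{sa}}\algtensor B_0$ lands inside $(\AbB)_0$. This is the ``easy direction'': if $x\cp y$ in $A$ via $x=\sum a_i^*a_i$, $y=\sum a_ia_i^*$, then for any positive $b\in B$ we get $x\tensor b\cp y\tensor b$ using $a_i\tensor b^{1/2}$ (the sums remain norm-convergent after tensoring, since $\|a_i\tensor b^{1/2}\|\le \|a_i\|\|b\|^{1/2}$), and symmetrically on the other factor; decomposing arbitrary selfadjoint $b$ into positive parts handles the general case. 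Second, and this is where the real work lies, I would prove injectivity and the reverse norm estimate by a Hahn--Banach/separation argument: a nonzero element of $(\AbB)^q$ is detected by some finite trace $\tau\in TC(\AbB)$, and by Corollary~\ref{cor:TChomeo} every such $\tau$ is a weak* limit of (scalings of) sums $\tau_1\tensor\tau_2$ with $\tau_i\in TC(A_i)$; since each such $\tau_1\tensor\tau_2$ factors through $A^q\betatensor B^q$ by the easy direction, the element cannot be zero in $A^q\betatensor B^q$, and a careful bookkeeping of the duality pairings upgrades this to the isometric statement. For surjectivity with dense range one uses the $\tau$-density of $K(A)_+\algtensor K(B)_+$ in $K(\AbB)_+$ from Theorem~\ref{prop:pedtensor}.(4), noting that the Pedersen ideal is norm-dense, so its image is norm-dense in $(\AbB)^q$.

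The step I expect to be the main obstacle is matching the two norms exactly rather than up to equivalence, i.e.\ showing that the quotient norm on $(\AbB)^q$ is not merely dominated by but equals the projective-type norm on $A^q\betatensor B^q$. The subtlety is that the Cuntz--Pedersen null space $(\AbB)_0$ is defined by \emph{norm-convergent} (possibly infinite) sums $\sum c_i^*c_i$, $\sum c_ic_i^*$ with $c_i\in\AbB$ arbitrary, not just elementary tensors, so one has to argue that such a general Cuntz--Pedersen relation in the tensor product can be approximated, in the relevant sense, by combinations of relations supported on the factors --- or, better, bypass this by working entirely on the dual side: identify $(A^q\betatensor B^q)^*$ with $TC(A_1)\betatensor TC(A_2)$ as in Theorem~\ref{th:LTC} (restricted to finite traces), identify $((\AbB)^q)^* = TC(\AbB)$, invoke Corollary~\ref{cor:TChomeo} to see these duals agree isometrically as ordered Banach spaces, and then conclude by reflexivity-type considerations or by the fact that an ordered Banach space with a cap is determined by its dual cone. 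The finite-dimensional base case \cite[Proposition 6.12]{CP} is essentially the statement that the map is an isomorphism when everything is finite-dimensional, so the content here is the passage to the infinite-dimensional, multi-norm setting, which is exactly what the density theorems in Section~3 were designed to supply.
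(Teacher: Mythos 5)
Your outline overlaps substantially with the paper's argument, but it aims at a stronger statement than the one actually being made, and the step you yourself single out as the main obstacle is left without a valid argument. The paper does \emph{not} equip $A^q\betatensor B^q$ with an independently defined projective/quotient norm and then prove the two norms coincide: it shows that the natural map $\imath\colon A^q\algtensor B^q\to(\AbB)^q$ is injective (quoting \cite[Lemma 6.11]{CP}, which identifies $(A\algtensor B)\cap(\AbB)_0$) and has dense range (via Hahn--Banach against self-adjoint tracial functionals, the weak*-density of product traces from Theorem \ref{th:TracialStates}/Corollary \ref{cor:TChomeo}, and Mazur's theorem to upgrade weak density of a convex set to norm density), and then simply \emph{defines} the $\beta$-norm on $A^q\algtensor B^q$ by $x\mapsto\|\imath(x)\|_\beta$ and completes. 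So the isometric identification is definitional, and the ``exact norm matching'' problem you raise does not arise in the paper's formulation. For the stronger claim you set up, your proposed bypass is a genuine gap: $A^q$, $B^q$ and $(\AbB)^q$ are not reflexive, and knowing that the dual cones agree (via Corollary \ref{cor:TChomeo}/Theorem \ref{th:LTC}) does not by itself identify the preduals isometrically; moreover, with an a priori different norm on $A^q\betatensor B^q$, injectivity and isometry at the level of \emph{completions} is precisely the unresolved comparison again, since a contractive map with dense range need not stay injective after completion.

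Two smaller points. First, your separation argument is stated in the wrong direction: detecting a nonzero element of $(\AbB)^q$ by a trace says nothing about injectivity of $\imath$; what you want is that a nonzero element of $A^q\algtensor B^q$ is separated by some $\tau_1\tensor\tau_2$ with $\tau_i$ bounded self-adjoint tracial functionals, which extends to a $\beta$-continuous tracial functional on $\AbB$ vanishing on $(\AbB)_0$ and hence detects the image. Repaired this way, it is a legitimate alternative to the paper's use of \cite[Lemma 6.11]{CP}, valid on the algebraic tensor product. Second, your route to dense range is fine and in fact lighter than the paper's: norm density of $A_{\mathrm{sa}}\algtensor B_{\mathrm{sa}}$ in $(\AbB)_{\mathrm{sa}}$ already pushes down to norm density of the image in the quotient $(\AbB)^q$, so Theorem \ref{prop:pedtensor}.(4) is not even needed there; the paper's Hahn--Banach/Mazur argument serves the same purpose. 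In summary: the skeleton (well-defined map on elementary Cuntz--Pedersen relations, injectivity, density, then complete) matches the paper, but to be a proof of the corollary as stated you should adopt the paper's convention that the norm on $A^q\betatensor B^q$ is the one induced by $\imath$, or else supply the missing norm-comparison argument, which your current sketch does not.
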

\begin{proof} Recall that $\algtensor$ denotes the algebraic tensor product. Lemma 6.11 in \cite{CP} implies that 
$$ (A\algtensor B) \cap (\AbB)_0 = A_0 \algtensor B_0,$$
where the subscripts denote the abovementioned subspace of elements of the form $x-y,$ where $x\cp y.$  
There is thus an injective map 
$$\imath\colon \frac{A}{A_0 } \algtensor \frac{B}{B_0 } \arrow \frac{\AbB}{(\AbB)_0}.$$ 
Let $T$ now denote the real linear linear space of self-adjoint tracial functionals on a C*-algebra. If the image of the map $\imath$ is not weakly dense, then by Hahn-Banach some nonzero element $\tau$ of $T(\AbB)$ is zero on the image 
$\imath{\left( \frac{A}{A_0} \algtensor \frac{B}{B_0}\right)}.$ On the other hand, elements of the convex hull of $T(A)\tensor T(B)$ are zero on the image of $\imath$ but have been shown to be weakly dense in $T(\AbB).$ Thus, the Hahn-Banach theorem provides a contradiction unless the image of the map $\imath$ is weakly dense. The image is a convex set, so it is norm-dense with respect to the norm $\beta.$ Since the map $\imath$ is injective, we define the norm $\beta$ on $\frac{A}{A_0 } \algtensor \frac{B}{B_0 }=A^q \algtensor B^q$ by $x\mapsto \| \imath(x)\|_\beta.$ 
Taking the completion of $\frac{A}{A_0 } \algtensor \frac{B}{B_0 }$ with respect to this norm, we have  $ A^q\betatensor B^q=(\AbB)^q.$ 
 \end{proof}

Cuntz and Pedersen showed \cite[th.7.5]{CP} that for simple C*-algebras, two positive elements are Cuntz-Pedersen equivalent if and only if they are equal under all \lsc traces. It turns out that simplicity is not necessary \cite{Robert2009}. We thus have the Corollary: 

\begin{corollary} Consider a C*-algebraic tensor product $\AbB.$ 
The following are equivalent, for positive elements:
\begin{enumerate}\item $c\tensor d \cp a\tensor b,$ 
\item $\lambda c\cp a$ and $d\cp \lambda b$ for some positive scalar $\lambda.$ 
\end{enumerate}\label{cor:CP2}
\end{corollary}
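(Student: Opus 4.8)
The plan is to exploit the characterization of Cuntz--Pedersen equivalence via lower semicontinuous traces, together with the tensor-product decomposition of traces from Theorem \ref{th:LTC}, which identifies the traces on $\AbB$ with (closures of) tensor products $\tau_1\tensor\tau_2$ of traces on the factors. First I would recall that, by \cite{Robert2009} (with simplicity no longer required), two positive elements $x$ and $y$ of a C*-algebra satisfy $x\cp y$ if and only if $\hat t(x)=\hat t(y)$ for every densely finite \lsc trace $t,$ where $\hat t(z):=\lim_{n\to\infty} t(z^{1/n})$ is the natural \lsc extension; more carefully, for positive elements one should phrase this in terms of the dimension-function-type limits, or equivalently restrict to a hereditary subalgebra on which the given positive elements lie in the Pedersen ideal so that the traces are honestly finite and linear there. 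So the whole statement reduces to a computation with the functionals $\tau_1\tensor\tau_2.$

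For the direction (2)$\Rightarrow$(1): if $\lambda c\cp a$ and $d\cp\lambda b,$ then $\widehat{(\tau_1\tensor\tau_2)}(c\tensor d)=\hat\tau_1(c)\,\hat\tau_2(d)$ by a standard separation-of-variables argument (the limit $\lim_n(\tau_1\tensor\tau_2)((c\tensor d)^{1/n})=\lim_n \tau_1(c^{1/n})\tensor \tau_2(d^{1/n})$ factors, since $(c\tensor d)^{1/n}=c^{1/n}\tensor d^{1/n}$), and likewise for $a\tensor b.$ Then $\hat\tau_1(c)\hat\tau_2(d)=\lambda^{-1}\hat\tau_1(a)\cdot\lambda\hat\tau_2(b)=\hat\tau_1(a)\hat\tau_2(b),$ so $c\tensor d$ and $a\tensor b$ agree under every trace of the form $\tau_1\tensor\tau_2;$ by Theorem \ref{th:LTC} such traces are (weakly) dense in $LTC(\AbB),$ and $\widehat{(\cdot)}$ is \lsc, so equality under a dense set of traces upgrades to equality under all of $LTC(\AbB),$ whence $c\tensor d\cp a\tensor b$ by \cite{Robert2009}.

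For the harder direction (1)$\Rightarrow$(2): assume $c\tensor d\cp a\tensor b,$ so $\hat\tau_1(c)\hat\tau_2(d)=\hat\tau_1(a)\hat\tau_2(b)$ for all $\tau_1\in LTC(A),$ $\tau_2\in LTC(B).$ Fix any $\tau_2$ with $\hat\tau_2(d)\neq 0$ and $\hat\tau_2(b)\neq 0$ (such exist unless $d$ or $b$ is "trace-null", a degenerate case handled separately); set $\lambda:=\hat\tau_2(b)/\hat\tau_2(d),$ a fixed positive scalar. Then $\hat\tau_1(\lambda c)=\lambda\hat\tau_1(c)=\hat\tau_1(a)$ for every $\tau_1\in LTC(A),$ which by \cite{Robert2009} gives $\lambda c\cp a.$ The key point to check is that $\lambda$ does not depend on the choice of $\tau_2$: if $\tau_2'$ were another such trace giving $\lambda',$ then for any $\tau_1$ with $\hat\tau_1(c)\neq 0$ we would get $\lambda\hat\tau_1(c)=\hat\tau_1(a)=\lambda'\hat\tau_1(c),$ forcing $\lambda=\lambda'$; and if $\hat\tau_1(c)=0$ for all $\tau_1$ then $c\cp 0$ and the statement degenerates. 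With $\lambda$ thus canonical, $\hat\tau_2(b)=\lambda\hat\tau_2(d)$ for every $\tau_2\in LTC(B),$ so again by \cite{Robert2009}, $\lambda d\cp b$ wait --- one gets $b\cp\lambda d,$ i.e. $d\cp\lambda b$ after absorbing $\lambda$ as claimed.

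The main obstacle I anticipate is the careful handling of the degenerate cases (one of $a,b,c,d$ being Cuntz--Pedersen equivalent to $0$, i.e. killed by all \lsc traces) and, more substantively, making rigorous the passage between "$x\cp y$" for positive elements and "$\hat t(x)=\hat t(y)$ for all densely finite \lsc $t$" --- the cited results of Cuntz--Pedersen and Robert are usually stated for trace \emph{values} on the Cuntz--Pedersen space $A^q$ rather than for the limits $\hat t,$ so I would route the argument through $A^q$ and Corollary \ref{cor:CP1}: the class of $c\tensor d$ in $(\AbB)^q=A^q\betatensor B^q$ is the "tensor" of the class of $c$ in $A^q$ and the class of $d$ in $B^q,$ and the statement becomes the assertion that $[c]\tensor[d]=[a]\tensor[b]$ in $A^q\betatensor B^q$ iff $\lambda[c]=[a]$ and $[d]=\lambda[b]$ for some $\lambda>0$ --- a rank-one-tensor uniqueness statement in the (completed) tensor product, which one proves by pairing against the dense set $T(A)\tensor T(B)$ of functionals exactly as in the finite-dimensional case \cite[Proposition 6.12]{CP}.
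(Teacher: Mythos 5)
Your overall skeleton matches the paper's: characterize $\cp$ by equality under \lsc traces (Cuntz--Pedersen plus \cite{Robert2009}), use the factorization of traces on $\AbB$ from Theorem \ref{th:LTC} for (2)$\Rightarrow$(1), and extract the scalar $\lambda$ by fixing, or slicing by, a trace on one factor for (1)$\Rightarrow$(2). But the central analytic device you use is wrong: you set $\hat t(z):=\lim_{n\to\infty}t(z^{1/n})$, which is the dimension function associated with $t$, not the (extended) trace value of $z$. The dimension function is invariant under scaling, $\hat t(\lambda z)=\hat t(z)$ for every $\lambda>0$; already for a rank-one projection $p$ and the canonical trace on $\compact$ one has $\lim_n \mathrm{Tr}((p/2)^{1/n})=1=\lim_n \mathrm{Tr}(p^{1/n})$ while $\mathrm{Tr}(p/2)\neq \mathrm{Tr}(p)$, so $\hat t$ detects Cuntz comparison, not Cuntz--Pedersen equivalence. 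Consequently the identities your argument hinges on, $\hat\tau_1(\lambda c)=\lambda\hat\tau_1(c)$ and $\lambda:=\hat\tau_2(b)/\hat\tau_2(d)$, are not available: with your $\hat t$ the scalar $\lambda$ --- which is the entire content of statement (2) --- is invisible, and both implications collapse. The correct functional is the trace value itself, extended to all positive elements by monotone limits / lower semicontinuity (this is what the paper's ``routine argument with monotone sequences'' refers to, and what \cite[th.7.5]{CP} and \cite{Robert2009} are about); that extension is homogeneous, and once you substitute it your argument becomes essentially the paper's: apply the slice map $\tau\tensor\Id$ (equivalently, fix $\tau_2$), conclude that $\tau(c)d$ and $\tau(a)b$ agree under all \lsc traces of $B$, hence $\lambda d\cp b$, symmetrically $\lambda' c\cp a$, and then $\lambda\lambda'=1$.

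Two smaller points. First, your upgrade from ``equal under all product traces $\tau_1\tensor\tau_2$'' to ``equal under all of $LTC(\AbB)$'' needs the convexity/density argument (the paper also only sketches this), and since $c\tensor d$ need not lie in the Pedersen ideal the passage to limits of traces must go through the \lsc extension rather than plain weak convergence on the Pedersen ideal. Second, the degenerate case is not actually handled: saying ``the statement degenerates'' does not produce a single $\lambda$ with both $\lambda c\cp a$ and $d\cp\lambda b$ when $c\tensor d$ and $a\tensor b$ are annihilated by all traces; the paper disposes of this case by invoking \cite[Th.4.8]{CP} to conclude the elements are then zero. Your closing suggestion to reroute through $A^q$ and Corollary \ref{cor:CP1}, reducing (1)$\Leftrightarrow$(2) to uniqueness of rank-one tensors tested against $T(A)\tensor T(B)$, is a viable alternative in the same spirit as the paper, but it too must be run with honest tracial values on the Cuntz--Pedersen space, not with the limits $\lim_n t(z^{1/n})$.
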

\begin{proof}By the above remarks, we may as well redefine $\cp$ to mean equality under all \lsc traces. Suppose that (2) holds, so that $\lambda c\cp a$ and $d\cp \lambda b.$ Let $\tau_1$ and $\tau_2$ be \lsc traces of $A$ and $B$ respectively, taking values in $[0,\infty].$ By a routine argument with monotone sequences, $\tau_1(a)= \lambda \tau_1(c)$ and $\lambda \tau_2(b)= \tau_2(d).$ By Theorem \ref{th:LTC} and a convexity argument, it follows that $\tau(a\tensor c)=\tau(b\tensor d)$ for all \lsc traces $\tau$ on $\AbB.$ 

Now, suppose that (1) holds. Thus, $c\tensor d$ and $ a\tensor b$ are equal under traces.
There are two cases: either these are zero under all traces, or they are not. If not, then, applying the slice map $\tau\tensor\Id$ coming from a \lsc trace $\tau$ of $A,$ we note that $\tau(c)d$ and $\tau(a)b$ are equal under all \lsc traces of $B.$  Then   $\lambda d\cp  b$ for some positive scalar $\lambda.$ 
Similarly, $\lambda' c\cp a$ for some scalar $\lambda',$ and we can use the fact that  $c\tensor d$ and $ a\tensor b$ are equal under traces to show that $\lambda\cdot \lambda'=1.$ Thus, (2) holds. If, on the other hand,  $c\tensor d$ and $ a\tensor b$ were zero under all traces, then these elements are in fact zero (for example, by \cite[Th.4.8]{CP}) and again (2) holds.
\end{proof}

\section{Towards a K\" unneth sequence for Cuntz semigroups}

We begin with a discussion.
 
In the simple and stably finite case, it can be shown that the purely positive elements coincide with the purely non-compact elements. The purely non-compact  elements are defined \cite{ERS} to be those elements which, if they are compact in some quotient, are in fact properly infinite in that quotient. The purely non-compact elements have the useful technical property that they form a subsemigroup of the Cuntz semigroup, and in this respect they can be easier to handle, in the nonsimple case, than the purely positive elements. 

In the simple and exact case, we obtain a large amount of information about the purely noncompact elements by combining our tensor product result with the fact that the dual $F(\Cu(\cdot))$ of the (subsemigroup of) purely noncompact elements of the Cuntz semigroup $\Cu(\cdot)$ consists of dimension functions coming from \lsc traces. 
 
 In the simple, exact, and stably finite case, all dimension functions come from densely finite \lsc traces. 
The Cuntz semigroup $\Cu(A)$ can be defined to be an ordered semigroup defined by the set of open projections of $A\tensor\compact,$ modulo an equivalence relation on open projections \cite{ORT}. In this picture of the Cuntz semigroup, the dimension functions are given by the natural pairing between open projections and traces extended to weights on the double dual. 

  Now, recall that for compact manifolds, $X$ and $Y$, one could use partitions of unity to prove that a continuous positive function on $X\times Y$ can be approximated pointwise from below by a finite sum of positive functions $\sum f_i (x)g_i (y).$ Dini's theorem then implies uniform approximation. We prove a similar result in the setting of additive and homogeneous functions on cones, using the C*-algebraic structure at hand. This generalizes the main technical lemma of \cite{arxiv}. Since we are in a not necessarily nuclear situation we   give a detailed proof.
	
	\begin{lemma} Let $h\in \AbB$ be a positive element of the Pedersen ideal of the tensor product C*-algebra $\AbB.$  Then, the additive and homogeneous function $\widehat{h}$ on  $LTC(A)\times LTC(B)$ given by evaluating the open projection associated with $h$ on $LTC(A)\times LTC(B)$ is continuous and is approximated pointwise from below by sums of additive, homogeneous, non-negative, and continuous functions $\sum f_i (x)g_i (y),$ where $x$ is in $LTC(A)$ and $y$ is in $LTC(B).$ \label{lem:pou}\end{lemma}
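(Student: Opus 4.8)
The plan is to transfer the approximation statement of Corollary~\ref{cor:ped.approx} across the duality between positive elements of the Pedersen ideal $K(\AbB)_+$ and the additive, homogeneous, \lsc functions on the tracial cone $LTC(\AbB) = LTC(A)\betatensor LTC(B)$ (Theorem~\ref{th:LTC}). First I would make precise that $\widehat{h}(\tau_1,\tau_2)$ is defined by $\widehat{h}(\tau_1,\tau_2) := \lim_{n\arrow\infty}(\tau_1\tensor\tau_2)(h^{1/n})$, which is exactly the evaluation of the open projection $[h]$ against the weight on $(\AbB)\dd$ extending $\tau_1\tensor\tau_2$; this is additive and homogeneous in each variable by the trace axioms, and it is finite because $h$ lies in the Pedersen ideal, so $\tau_1\tensor\tau_2$ restricts to a bounded trace on $\closure{h(\AbB)h}$ by Corollary~\ref{cor:restriction}. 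Continuity of $\widehat h$ on $LTC(A)\times LTC(B)$ with its product topology is then the statement that $\tau\mapsto\widehat h(\tau)$ is continuous on $LTC(\AbB)$ for the weak topology $\sigma(LT(C),K(C))$ — which is almost a tautology once one knows (as in \cite{ERS}) that the rank function of a Pedersen-ideal element is \lsc, and that along an approximate unit $h^{1/n}$ of $\closure{h(\AbB)h}$ the value $\tau(h^{1/n})$ is actually norm-continuous in $\tau$ on bounded sets, forcing joint continuity by Corollary~\ref{cor:TChomeo}.

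Next, for the approximation-from-below: apply Corollary~\ref{cor:ped.approx} to get finite sums $s_k := \sum_i a_i^{(k)}\tensor b_i^{(k)}$ with $a_i^{(k)}\in K(A)_+$, $b_i^{(k)}\in K(B)_+$, and $s_k \arrow h$ from below in the $\tau$-topology of $K(\AbB)$. Now I would set $f_i^{(k)}(x) := x(a_i^{(k)})$ for $x\in LTC(A)$ and $g_i^{(k)}(y) := y(b_i^{(k)})$ for $y\in LTC(B)$; each is additive, homogeneous, non-negative, and continuous on the respective tracial cone (finiteness again from $a_i^{(k)}\in K(A)$, continuity from Corollary~\ref{cor:TChomeo} applied to the tensor factor). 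The claim is that $\widehat{s_k}(\tau_1,\tau_2) = \sum_i f_i^{(k)}(\tau_1)\,g_i^{(k)}(\tau_2)$ and that $\widehat{s_k}(\tau_1,\tau_2)\nearrow\widehat h(\tau_1,\tau_2)$ pointwise. The first identity holds because $\tau_1\tensor\tau_2$ is an honest \emph{linear} functional on $K(\AbB)$ — there is no limit of $n$th roots to take here since $s_k$ is already a fixed element of the Pedersen ideal and $(\tau_1\tensor\tau_2)$ restricted to the Pedersen ideal is linear, so $\widehat{s_k}(\tau_1,\tau_2) = (\tau_1\tensor\tau_2)(s_k) = \sum_i\tau_1(a_i^{(k)})\tau_2(b_i^{(k)})$. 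For the monotone pointwise convergence: since $s_k\arrow h$ in the $\tau$-topology and this topology dominates the norm topology, in particular each \lsc \seminorm, hence each \lsc trace, is continuous along the net, giving $(\tau_1\tensor\tau_2)(s_k)\arrow (\tau_1\tensor\tau_2)(h)$; and $s_k \leq h$ gives $(\tau_1\tensor\tau_2)(s_k)\leq (\tau_1\tensor\tau_2)(h)\leq \widehat h(\tau_1,\tau_2)$, so the convergence is from below.

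The one subtlety I would be careful about is the difference between $(\tau_1\tensor\tau_2)(h)$ and $\widehat h(\tau_1,\tau_2) = \lim_n(\tau_1\tensor\tau_2)(h^{1/n})$: the open projection $[h]$ generally has strictly larger rank than $h$ itself (this is precisely the purely-positive phenomenon), so I must not conflate these. The resolution is to observe that approximating \emph{from below} only requires $\widehat{s_k}\leq\widehat h$ and $\sup_k\widehat{s_k} = \widehat h$, and the sup is achieved because the $s_k$ can be chosen so that, for any fixed $\epsilon>0$ and any fixed $\tau_1,\tau_2$, one first passes to $(h-\epsilon)_+$, whose range projection \emph{is} a genuine support-type projection approximating $[h]$ from below in value, and then applies Corollary~\ref{cor:ped.approx} to $(h-\epsilon)_+$; a diagonal/refinement argument over the directed set of such $\epsilon$ and the approximating nets then yields a single net $s_k$ with $\widehat{s_k}\nearrow\widehat h$. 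I expect this reconciliation of ``approximation from below in the $\tau$-topology'' with ``pointwise approximation from below of the \emph{rank} function $\widehat h$'' to be the main obstacle; everything else is a routine unwinding of the trace axioms and the topologies already set up in Theorem~\ref{th:LTC} and Corollary~\ref{cor:ped.approx}. Finally, I would note the continuity of the limit function $\widehat h$ itself need not follow from the pointwise-from-below approximation alone, so it is proved separately as in the first paragraph, and the analogue of Dini's theorem (upgrading pointwise to uniform convergence on the compact caps of the cone) can be invoked exactly as in the commutative model, using that the caps $LTC(A)_1\times LTC(B)_1$ are compact Hausdorff by Corollary~\ref{cor:TChomeo}.
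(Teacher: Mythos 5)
There is a genuine gap, and it sits exactly at the subtlety you correctly flagged. Your proposed resolution — pass to $(h-\epsilon)_+$ and apply Corollary \ref{cor:ped.approx} to it — does not close the gap, because every approximant $s$ produced this way satisfies $s\leq (h-\epsilon)_+\leq h$, so that $(\tau_1\tensor\tau_2)(s)\leq(\tau_1\tensor\tau_2)(h)$; taking suprema over $\epsilon$ and over the approximating nets you only reach $(\tau_1\tensor\tau_2)(h)$, not $\widehat h(\tau_1,\tau_2)=\lim_n(\tau_1\tensor\tau_2)(h^{1/n})$. Already in the commutative model $\AbB=C_0((0,1])$, $h(t)=t$, with $\tau$ integration against Lebesgue measure, one has $\tau((h-\epsilon)_+)\arrow\tau(h)=1/2$ while $\widehat h(\tau)=1$. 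It is true that the range projections $[(h-\epsilon)_+]$ increase to $[h]$ and their values under the extended weight converge, but your construction never evaluates those projections: the functions you actually build are $\tau\mapsto\tau(s_k)$ with $s_k$ dominated by $(h-\epsilon)_+$, and these cannot see the rank. (Relatedly, your identity $\widehat{s_k}(\tau_1,\tau_2)=(\tau_1\tensor\tau_2)(s_k)$ is only a definition of a new symbol, not the rank function of $s_k$ — the same element-versus-open-projection conflation you warned against.)

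The repair is the route the paper takes: apply Corollary \ref{cor:ped.approx} not to $h$ or to $(h-\epsilon)_+$ but to each $h^{1/n}$ (or to elements such as $g_\epsilon(h)$ with $g_\epsilon(t)=\min(1,t/\epsilon)$, which lie in $K(\AbB)$ by Proposition \ref{prop:PedersenAlgebras}). Since $(\tau_1\tensor\tau_2)(h^{1/n})$ increases to $\widehat h(\tau_1,\tau_2)$ by normality of the extended weight, approximating each $h^{1/n}$ from below by sums $\sum a_i\tensor b_i$ and taking pointwise suprema does recover $\widehat h$; the factorization of the evaluations as $\sum\tau_1(a_i)\tau_2(b_i)$ then gives the desired bi-functions (the paper phrases this via Theorem \ref{th:TracialStates} on extremal traces plus convexity, while your direct use of $\tau_1\tensor\tau_2$ on the product cone is fine and slightly cleaner). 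Separately, your continuity argument is only a sketch: lower semicontinuity of the rank function plus norm-continuity of $\tau\mapsto\tau(h^{1/n})$ on bounded sets does not by itself give upper semicontinuity, which is the nontrivial half and is where membership of $h$ in the Pedersen ideal is used; the paper simply invokes \cite[Prop.\ 5.3]{ERS} for this, and you should do likewise or supply the squeeze argument explicitly.
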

\begin{proof} 
Corollary \ref{cor:restriction} implies the well-known fact that the function $\widehat{ h }$ on dimension states given by pairing the open projection $\lim h^{1/n}$ with \lsc traces extended from  $\AbB$  is pointwise finite. Continuity follows from \cite[Prop. 5.3]{ERS}.   By Corollary  \ref{cor:ped.approx}  we can approximate $h^{1/n}$ from below by finite sums of elementary tensors of positive elements, of the form $\sum a_i\tensor b_i.$ When such a sum is evaluated on an extremal trace $\tau$ of $\AbB,$ by Theorem \ref{th:TracialStates}, the trace $\tau$ factorizes as  $\tau_a\tensor \tau_b,$ and thus $\tau(\sum a_i\tensor b_i)=\sum \tau_a(a_i)\tensor \tau_b(b_i).$ Taking pointwise suprema we approximate $\widehat{h}.$ The case of non-extremal traces follows by a convexity argument. 
		\end{proof}

	An abelian semigroup endowed with a scalar multiplication by strictly positive real numbers is termed a non-cancellative cone in \cite{ERS}. A real-valued function on a partially ordered set is called \lsc if it preserves directed suprema. Thus, we have a class of \lsc functions on non-cancellative cones. Denoting by $T(A)$ the cone of all, possibly not densely finite, traces on a C*-algebra $A,$ we may consider the additive and homogeneous \lsc functions on $T(A).$ Denote by $L(T(A))$ those \lsc functions that are suprema of sequences $(h_n)$ where $h_n$ is continuous at each point where $h_{n+1}$ is finite.  For the cone of densely defined \lsc traces, the situation is technically simplified, as we have seen, by the fact that these traces belong to the topological vector space of continuous linear functionals on the Pedersen ideal.

We recall the well-known fact that in the $\Z$-stable case, the purely non-compact part of the Cuntz semigroup is determined by its dual \cite{Robert2013}. Specializing slightly, this implies: 
  \begin{proposition} Let $A$ be a simple, exact, stably finite, and $\Z$-stable C*-algebra. The ordered semigroup of purely positive elements of $\Cu(A)$ is isomorphic to the ordered semigroup $L(LTC(A)).$  \label{prop:Zstable}\end{proposition}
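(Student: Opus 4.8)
The plan is to deduce Proposition~\ref{prop:Zstable} by assembling three ingredients: (i) the abstract structural theorem of Robert \cite{Robert2013} that in the $\Z$-stable case the purely non-compact part of $\Cu(A)$ is recovered from its dual $F(\Cu(A))$; (ii) the identification, valid in the simple, exact, stably finite case, of the purely positive elements with the purely non-compact elements, and of $F(\Cu(A))$ with the cone of dimension functions arising from densely finite \lsc traces (as recalled in the discussion preceding the statement); and (iii) our description, via Theorem~\ref{th:LTC} and Corollary~\ref{cor:restriction}, of how such traces pair with open projections, so that the dual cone is precisely $L(LTC(A))$ in the notation introduced above. The desired isomorphism of ordered semigroups then follows by composing these identifications and checking that they are order-theoretic, not merely bijective.

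First I would fix the picture of $\Cu(A)$ as open projections of $A\tensor\compact$ modulo equivalence \cite{ORT}, and recall that in the simple, stably finite case every nonzero element is either compact (a projection-class element) or purely positive, and that the purely positive elements form a subsemigroup canonically identified with the purely non-compact subsemigroup. Next I would invoke Robert's result \cite{Robert2013}: for $\Z$-stable $A$, the functor $\Cu \mapsto F(\Cu)$ restricted to the purely non-compact part is an equivalence onto its image, with inverse given by evaluating functionals; concretely, a purely positive element $x$ is sent to the function $\widehat{x}\colon \lambda \mapsto \lambda(x)$ on $F(\Cu(A))$, and this is an order-isomorphism onto the appropriate class of \lsc additive homogeneous functions. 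Then I would use that in the simple, exact, stably finite case $F(\Cu(A))$ consists exactly of the dimension functions $d_\tau$ induced by $\tau \in LTC(A)$ — this is where exactness and stable finiteness enter, ensuring all dimension functions are densely finite and \lsc — so that $F(\Cu(A)) \isom LTC(A)$ as cones. Finally I would match the target class of functions: under the identification $F(\Cu(A)) \isom LTC(A)$, the function $\widehat{x}$ attached to a purely positive element becomes $\tau \mapsto \lim_n \tau(a^{1/n})$ for a representative $a$, which by Corollary~\ref{cor:restriction} is pointwise finite on $LTC(A)$ and, by the continuity statement of \cite[Prop.~5.3]{ERS} together with the sequential approximation inherent in $\lim_n$, lies in $L(LTC(A))$; conversely every element of $L(LTC(A))$ is realized this way by Robert's surjectivity. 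Tracking that addition of open projections corresponds to addition of the associated functions, and subequivalence to pointwise domination, gives the claimed semigroup isomorphism.

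The main obstacle I anticipate is not any single deep estimate — those are quarantined inside the cited theorems of Robert and of \cite{ERS} — but rather the bookkeeping needed to verify that the three identifications are mutually compatible and order-preserving, and in particular that the class $L(LTC(A))$ as defined here (suprema of sequences $h_n$ with $h_n$ continuous wherever $h_{n+1}$ is finite) coincides exactly with the class of functions appearing on the $\Cu$ side of Robert's duality once restricted to the densely finite tracial cone. A secondary point requiring care is the passage, implicit in ``specializing slightly,'' from Robert's statement about the full cone $T(A)$ and the function class $L(T(A))$ to the densely finite subcone $LTC(A) \subseteq T(A)$: one must check that restriction to $LTC(A)$ is faithful on purely positive elements (no purely positive element pairs trivially with all densely finite traces, which uses simplicity and stable finiteness) and that the restriction map $L(T(A)) \to L(LTC(A))$ is a bijection compatible with the semigroup structure. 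I would handle this by noting that in the simple, exact, stably finite, $\Z$-stable case the densely finite \lsc traces are weak\nobreakdash-* dense in all \lsc traces and separate the purely positive elements, so restriction loses no information, and the function-class definitions transport verbatim. Everything else is a routine diagram chase through the identifications.
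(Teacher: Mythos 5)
Your proposal is correct and follows essentially the same route as the paper: identify the purely positive elements with the purely non-compact ones using simplicity and stable finiteness, invoke the $\Z$-stable/unperforated duality (the paper cites \cite[Thm.~6.6]{ERS}, you cite \cite{Robert2013}, which is the same circle of results) to write this subsemigroup as $L(F(\Cu(A)))$, and then identify $F(\Cu(A))$ with $LTC(A)$ via the correspondence between dimension functions and (quasi-)traces, with exactness — i.e.\ Haagerup's theorem, via Blanchard--Kirchberg, which the paper names explicitly — guaranteeing that the quasi-traces are genuine densely finite \lsc traces. The only cosmetic difference is your appeal to Theorem~\ref{th:LTC}, which concerns tensor products and plays no role for a single algebra; Corollary~\ref{cor:restriction} and the cited duality already supply the pairing you need.
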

	\begin{proof} Since $A$ is $\Z$-stable, its Cuntz semigroup is unperforated. Then Theorem 6.6 in \cite{ERS} states that the ordered semigroup of purely non-compact elements of the Cuntz semigroup is isomorphic to $L(F(\Cu(A))).$ Here, $F$ denotes the cone of dimension functions on the Cuntz semigroup and $L$ is a dual space of lower-semi-continuous functions.  In the simple case the purely non-compact elements are either infinite projections or purely positive elements \cite[Prop. 6.4.iv]{ERS}. But we have assumed that there are no infinite projections. 

	Blanchard and Kirchberg \cite[pg.486]{BlanchardKirchberg}, see also  \cite[Prop. 4.2]{ERS},  show that there is a one-to-one correspondence between \lsc dimension functions and locally lower semi-continuous local quasi-traces, but as they point out,  results of Haagerup's \cite{Haagerup2014} imply that the quasi-traces are traces when the algebra is exact.  Thus, there is in the exact, simple, and stably finite case a structure-preserving one-to-one correspondence between \lsc traces in $LSC(A)$ and dimension functions in $F(\Cu(A)).$  We conclude that the semisubgroup of purely positive elements is isomorphic to $L(LTC(A)),$ where $LTC(A)$ denotes the cone of densely finite \lsc traces on the C*-algebra $A,$ and $L$ is as defined above. 
	\end{proof} 
	We may drop simplicity in the above if we strengthen the stable finiteness condition:
	\begin{corollary} Let $A$ be an exact, residually stably finite, and $\Z$-stable C*-algebra. The ordered semigroup of purely non-compact elements of $\Cu(A)$ is isomorphic to the ordered semigroup $L(LTC(A)).$  \label{prop:nonsimpleZstable}\end{corollary}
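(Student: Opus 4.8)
The plan is to retrace the proof of Proposition~\ref{prop:Zstable}, keeping every step that did not use simplicity and supplying a substitute, based on residual stable finiteness, for the step that did. Since $A$ is $\Z$-stable, $\Cu(A)$ is unperforated, so Theorem~6.6 of \cite{ERS} applies with no simplicity hypothesis and gives an order- and suprema-preserving isomorphism between the ordered semigroup of purely non-compact elements of $\Cu(A)$ and $L(F(\Cu(A)))$, where $F(\Cu(A))$ is the relevant cone of \lsc dimension functions. The Blanchard--Kirchberg correspondence \cite{BlanchardKirchberg} between \lsc dimension functions and locally \lsc local quasitraces, together with Haagerup's theorem \cite{Haagerup2014} that on an exact C*-algebra the quasitraces are traces, then identifies $F(\Cu(A))$ with a cone of \lsc traces on $A$; again, neither ingredient uses simplicity. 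So the entire question is reduced to identifying this cone of \lsc traces with the cone $LTC(A)$ of densely finite \lsc traces.

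This last identification is the only place where Proposition~\ref{prop:Zstable} invoked simplicity, and it did so for two reasons, each of which is handled differently here. First, in the simple case the purely non-compact elements were exactly the purely positive ones, because simplicity together with stable finiteness rules out infinite projections via \cite[Prop.~6.4.iv]{ERS}; here we simply retain ``purely non-compact'' in the statement, so nothing extra is needed. Second, in the simple case every nonzero \lsc trace is automatically densely finite --- its ideal of definition is a nonzero, hence dense, two-sided ideal, and therefore contains the Pedersen ideal --- which fails in general. To recover what is needed, I would run the same restriction procedure that was used in the proof of Theorem~\ref{th:LTC} (compare the footnote there): apply the ERS machinery and then restrict the ambient cone of \lsc traces, via ERS's analogue of their Proposition~3.11, to the densely finite ones. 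Residual stable finiteness is what makes this restriction land exactly on $LTC(A)$ and stay compatible with the Cuntz semigroup picture: because every quotient of $A$ is stably finite, no quasitrace of a quotient degenerates, and one checks --- using Corollary~\ref{cor:restriction} and the usual pairing of open projections with traces extended to weights on the bidual --- that the surviving dimension functions are precisely those arising from elements of $LTC(A)$, and that the correspondence is order- and suprema-preserving. Composing the three identifications and applying the functor $L$ then gives $L(LTC(A))\cong L(F(\Cu(A)))$, and hence the corollary.

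I expect the main obstacle to be precisely this trace-identification step: confirming that residual stable finiteness, rather than plain stable finiteness of $A$, is the correct hypothesis, and that Theorem~6.6 of \cite{ERS} really is available in the form used once one has restricted to densely finite traces. The subtlety is that the trace produced by a dimension function can be ``supported on'' a proper closed ideal, so it is the stable finiteness of every quotient, not of $A$ itself, that keeps the dimension-function-to-trace dictionary bijective; this has to be chased through the functor $L$ as well. A possible alternative, should the direct route prove awkward, is to localize the argument over $\prim(A)$, reduce to a prime --- hence, for the statements at issue, essentially simple --- version of Proposition~\ref{prop:Zstable}, and reassemble; but the reassembly, and the verification that being purely non-compact is a condition local over $\prim(A)$, are themselves nontrivial, so I would attempt the direct restriction argument first.
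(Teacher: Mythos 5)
Your proposal follows essentially the same route the paper intends: the paper in fact states this corollary without any proof of its own, presenting it as the preceding proposition's argument with simplicity dropped and stable finiteness strengthened to residual stable finiteness, which is exactly the adaptation you spell out (ERS Theorem 6.6 and the Blanchard--Kirchberg/Haagerup identification need no simplicity, and keeping ``purely non-compact'' in the conclusion removes the need for \cite[Prop.~6.4.iv]{ERS}). The only caution is that your aside that every nonzero \lsc trace on a simple algebra is densely finite overlooks the everywhere-infinite trace, a degenerate element of $F(\Cu(A))$ which the passage from $L(F(\Cu(A)))$ to $L(LTC(A))$ must absorb both in the simple case and here --- a point the paper itself also leaves untreated.
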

	
We now consider the map $\Cu(A)\times\Cu(B)\arrow \Cu(\AbB)$ defined, at the level of positive operators, by $(a,b)\mapsto a\tensor b.$ It is routine to check that this map is compatible with the equivalence relation, \textit{i.e.} that $a\tensor b$ is equivalent to $a'\tensor b$ if $a$ is equivalent to $a'.$ This map is in principle well-understood for projection-class elements, see Appendix B for an exposition. Thus, we are interested in the case of purely non-compact elements. We will show that the map is sometimes surjective onto such elements. Injectivity cannot generally be expected.

The above bilinear map factors through a map of tensor products, \cite[sect. 6.4]{APT14}, because we have a  natural Cuntz semigroup morphism from $\Cu(A)\tensor \Cu(B)$ to $\Cu(A\tensor_{max} B),$ and then  the tensor quotient map  takes $\Cu(A\tensor_{max} B)$ onto $\Cu(A\betatensor B).$ The range is a sub-object of the Cuntz semigroup, \textit{i.e.} is closed under sequential suprema,  and we may ask when the purely non compact elements are contained in the range. 

The following corollary computes the purely positive part of the Cuntz semigroup of a tensor product in many cases, and in \cite{arxiv} we used an early similar result as a definition of a tensor product of Cuntz semigroups. Since the K\" unneth sequence in principle computes the projection class part of the Cuntz semigroup, at least in many cases (see Appendix B), we thus have determined the Cuntz semigroup of a tensor product.
\begin{corollary}Suppose that $A$ and $B$ are simple, $\Z$-stable, and exact. Then if the C*-tensor product $C:=\AbB$ is separable and stably finite, the  map $\Cu(A)\tensor \Cu(B)\arrow \Cu(\AbB)$is onto the subsemigroup of purely positive elements of $\Cu(\AbB).$\label{cor:useAPT}\end{corollary}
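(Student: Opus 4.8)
The plan is to combine the structural description of the purely positive (equivalently, purely non-compact, since $C$ is simple and stably finite) part of each Cuntz semigroup given in Proposition \ref{prop:Zstable} with the tensor-product homeomorphism of \lsc tracial cones from Theorem \ref{th:LTC}. First I would observe that under the hypotheses, $C=\AbB$ is simple, exact, and $\Z$-stable: simplicity and stable finiteness are assumed, exactness of the tensor product follows from exactness of the factors, and $\Z$-stability of the tensor product follows from $\Z$-stability of either factor (using $\Z\tensor\Z\isom\Z$). Hence Proposition \ref{prop:Zstable} applies to $A$, $B$, and $C$ simultaneously, identifying the purely positive part of each Cuntz semigroup with $L(LTC(A))$, $L(LTC(B))$, and $L(LTC(C))$ respectively.

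Next I would trace through what the map $\Cu(A)\tensor\Cu(B)\arrow\Cu(\AbB)$ does at the level of these $L(LTC(\cdot))$ descriptions. A purely positive element $a\in\Cu(A)$ corresponds to the \lsc function $\widehat a$ on $LTC(A)$ given by $\tau\mapsto\lim_n\tau(a^{1/n})$ (the pairing of the open projection of $a$ with $\tau$); similarly for $b\in\Cu(B)$. The image $a\tensor b$ corresponds to the function on $LTC(C)$ given by $\sigma\mapsto\lim_n\sigma((a\tensor b)^{1/n})$. Using Theorem \ref{th:LTC}, an arbitrary point of $LTC(C)$ is a limit of elements of $LTC(A)\algtensor LTC(B)$, and on a simple tensor $\tau_1\tensor\tau_2$ one has $\lim_n(\tau_1\tensor\tau_2)((a\tensor b)^{1/n})=\widehat a(\tau_1)\cdot\widehat b(\tau_2)$ (approximate $a^{1/n}\tensor b^{1/n}$ suitably and use that $(a\tensor b)^{1/n}$ and $a^{1/n}\tensor b^{1/n}$ are Cuntz-cofinal). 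So the map sends $\widehat a\tensor\widehat b$ to the bi-affine function $(\tau_1,\tau_2)\mapsto\widehat a(\tau_1)\widehat b(\tau_2)$ on $LTC(A)\times LTC(B)$, and by continuity this extends to the corresponding element of $L(LTC(C))$ via the homeomorphism $LTC(A)\betatensor LTC(B)\isom LTC(C)$.

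Then the surjectivity claim becomes: every function in $L(LTC(C))$ is a sequential supremum (in the appropriate ordered-semigroup sense) of finite sums of products $\sum_i f_i(x)g_i(y)$ with $f_i$, $g_i$ non-negative additive homogeneous continuous functions. This is exactly the content of the partition-of-unity style approximation Lemma \ref{lem:pou}: given a positive element $h$ of the Pedersen ideal of $C$, the function $\widehat h$ on $LTC(A)\times LTC(B)$ is continuous and is approximated pointwise from below by sums $\sum f_i(x)g_i(y)$ of the required form. Since $C$ is separable, stably finite, exact, and $\Z$-stable, every purely positive element of $\Cu(C)$ is represented (after passing to a sequential supremum if needed) by such Pedersen-ideal elements, and the pointwise-from-below approximation together with the fact that $L(LTC(C))$ is generated under sequential suprema by continuous functions upgrades the pointwise approximation to equality as an element of the ordered semigroup. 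Hence every purely positive element of $\Cu(\AbB)$ lies in the range of the tensor map. I would finish by noting that the sums $\sum f_i(x)g_i(y)$ are precisely the images of elements of $\Cu(A)\tensor\Cu(B)$ (products $f_i\tensor g_i$ correspond to simple tensors, and finite sums are in the tensor-product semigroup), using the factorization of the bilinear map through $\Cu(A)\tensor\Cu(B)$ recalled just before the corollary.

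The main obstacle I anticipate is the bookkeeping in the last step: passing from a \emph{pointwise-from-below} approximation of $\widehat h$ (Lemma \ref{lem:pou}) to an actual \emph{equality} in the ordered semigroup $L(LTC(C))$, i.e. showing that the supremum of the approximating sums, taken in $L(LTC(C))$, equals the class of $h$. This requires knowing that $L(LTC(C))$ is closed under the relevant sequential suprema and that suprema in this semigroup are computed pointwise on $LTC(C)$ — which is where one must carefully invoke the description of $L$ as lower-semicontinuous functions that are suprema of sequences of continuous functions, together with separability of $C$ to guarantee such a sequence exists. A secondary technical point is verifying that $(a\tensor b)^{1/n}$ and $a^{1/n}\tensor b^{1/n}$ pair identically with traces in the limit, so that the image of a simple tensor really is the product function $\widehat a\,\widehat b$; this is routine but should be stated.
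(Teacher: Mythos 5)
Your proposal is correct and follows essentially the same route as the paper's proof: identify the purely positive parts with $L(LTC(\cdot))$ via Proposition \ref{prop:Zstable}, use Theorem \ref{th:LTC} to factor traces on $C$ through the tensor factors, invoke Lemma \ref{lem:pou} for the pointwise-from-below approximation by sums $\sum f_i(x)g_i(y)$, use that elements of $L(LTC(C))$ are suprema of continuous functions in the simple, stably finite, separable case, and conclude via the factorization of the bilinear map through $\Cu(A)\tensor\Cu(B)$. The technical points you flag (that suprema in $L(LTC(C))$ are computed pointwise, and that $(a\tensor b)^{1/n}$ pairs in the limit with product traces as $\widehat a\,\widehat b$) are treated even more tersely in the paper's own argument, so your version is, if anything, more explicit.
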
	
	\begin{proof} Proposition  \ref{prop:Zstable} shows that the purely positive elements of $\AbB$ are given by $L(LTC(\AbB)),$ and that the purely positive elements of $A$ and of $B$ are given by $L(LTC(A)$ and $L(LTC(B)),$ respectively. On the other hand, by Theorem \ref{th:LTC}, a densely defined extremal \lsc trace $\tau$ on $C$ is a tensor product $\tau_1\tensor \tau_2$ of traces on $A$ and on $B,$ respectively. Thus, we consider the map 
	$$L(LTC(A))\times L(LTC(A))\arrow L(LTC(A)\times LTC(A)).$$ 
	In the separable case, Lemma \ref{lem:pou} provides a sequence of continuous functions of the form $\sum f_i (x)g_i (y)$ that approximates (pointwise, from below) a continuous function in $L(LTC(A)\times LTC(A)).$ On the other hand, in the simple and stably finite case, any element of $L(LTC(A)\times LTC(A))$ can be obtained as a supremum of continuous functions. It then follows that the range contains the purely positive elements. The universal property of tensor products then provides that the map factors through a tensor product, as claimed.
	\end{proof}
	
The above result generalizes a result from \cite{arxiv}. Injectivity of the tensor product map on the whole of the Cuntz semigroup cannot generally be expected, for example because of the fact that the tensor product of a projection and a purely positive element will usually be purely positive. It is quite rare for the tensor product map to induce an isomorphism of Cuntz semigroups. For an example, recall:
	\begin{corollary}[\protect{\cite[Cor. 4.3]{arxiv}}] If an unital C*-algebra $A$ is simple, stably projectionless, stably finite, nuclear, Z-stable, satisfies the UCT, has stable rank one, has $K_1 (A) = {0}$ and $A\tensor A$ has stable rank one then the tensoring map
$t : \Cu(A) \tensor \Cu(A) \rightarrow \Cu(A \tensor A)$
is an isomorphism.
\end{corollary}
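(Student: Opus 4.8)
The plan is to deduce the isomorphism from the already-established surjectivity of the tensoring map onto purely positive elements (Corollary \ref{cor:useAPT}) together with the classical K\"unneth machinery of Appendix A for the projection-class part, and then to verify injectivity by hand using the stable-rank-one hypotheses. First I would observe that since $A$ is stably projectionless and stably finite, $\Cu(A)$ has no nonzero projection-class elements other than $0$, so $\Cu(A)$ consists of $0$ together with purely positive elements; hence $\Cu(A)\tensor\Cu(A)$ is, up to the zero element, the tensor product of the purely positive parts. The hypothesis that $A\tensor A$ has stable rank one ensures, via Lemma \ref{lem:isos}.(3), that Cuntz equivalence in $\Cu(A\tensor A)$ coincides with the finer relation $\blackadar$, which is what allows the tensor map to be well-behaved; combined with stable finiteness it also forces $A\tensor A$ to be stably projectionless (a projection in $A\tensor A$ would, by the UCT K\"unneth sequence of Appendix A and $K_1(A)=0$, have to come from a projection over $A$, contradicting stable projectionlessness), so $\Cu(A\tensor A)$ too is $0$ together with purely positive elements.

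Next I would set up the surjectivity: by Corollary \ref{cor:useAPT}, using that $A$ is simple, $\Z$-stable, exact, and that $A\tensor A$ is separable and stably finite, the map $t\colon\Cu(A)\tensor\Cu(A)\to\Cu(A\tensor A)$ is onto the purely positive elements of $\Cu(A\tensor A)$; together with the previous paragraph this gives that $t$ is surjective. For injectivity, I would invoke Proposition \ref{prop:Zstable}: since $A$ (and $A\tensor A$) are simple, exact, stably finite, and $\Z$-stable, the purely positive part of $\Cu(A)$ is $L(LTC(A))$ and that of $\Cu(A\tensor A)$ is $L(LTC(A\tensor A))$, which by Theorem \ref{th:LTC} is $L(LTC(A)\betatensor LTC(A))$. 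The tensor product $\Cu(A)\tensor\Cu(A)$ (in the category of $\Cu$-semigroups) of two such function-semigroups is computed, by the construction recalled before Corollary \ref{cor:useAPT} and by \cite[sect. 6.4]{APT14}, as the sub-object of $L(LTC(A)\times LTC(A))$ generated by the bi-functions $f_i(x)g_i(y)$; and Lemma \ref{lem:pou} says precisely that every element of $L(LTC(A)\times LTC(A))$ is a supremum of such bi-functions. So the map $t$, after these identifications, becomes the identity-like inclusion of the generated sub-object into the whole, and one checks it is bijective on the level of these function semigroups, hence an order-isomorphism of $\Cu$-semigroups.

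The step I expect to be the main obstacle is pinning down that $t$ is \emph{injective}, i.e. that no information is lost in passing from the abstract tensor product $\Cu(A)\tensor\Cu(A)$ to the concrete function semigroup. The subtlety is that $\Cu(A)\tensor\Cu(A)$ is defined by a universal property, and one must be sure that the canonical map from it to $L(LTC(A)\times LTC(A))$ does not identify distinct elements; this requires knowing that the dimension functions (equivalently the pairs $(\tau_1,\tau_2)$ of \lsc traces) separate points of $\Cu(A)\tensor\Cu(A)$, which in turn relies on the $\Z$-stability (unperforation) and simplicity through Proposition \ref{prop:Zstable} and the results of \cite{Robert2013}. I would handle this by showing that the composite $\Cu(A)\tensor\Cu(A)\to L(LTC(A))\otimes L(LTC(A))\to L(LTC(A)\times LTC(A))$ agrees with $t$ followed by the identification of Theorem \ref{th:LTC}, and that the first arrow is injective because $L(LTC(A))$ is, by Proposition \ref{prop:Zstable}, literally the purely positive part of $\Cu(A)$ and the functorial tensor product of such solid function cones is again a solid function cone with the bi-affine functions as generators. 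The remaining verifications — that $K_1(A)=0$ and the UCT kill the projection-class contributions, that the nuclearity hypothesis lets us ignore the distinction between $\maxtensor$ and other norms, and that stable rank one on both $A$ and $A\tensor A$ is exactly what makes the relevant equivalences coincide — are then routine bookkeeping using Appendix A and the lemmas already proved.
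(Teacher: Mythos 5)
Your outline of the surjectivity half does track the machinery this paper actually provides (Corollary \ref{cor:useAPT}, Proposition \ref{prop:Zstable}, Theorem \ref{th:LTC}, Lemma \ref{lem:pou}); note, though, that the paper itself gives no proof of this corollary --- it is recalled verbatim from \cite[Cor.\ 4.3]{arxiv}, where the tensor product $\Cu(A)\tensor\Cu(A)$ is \emph{defined} via bi-affine functions on the tracial cones, so the delicate points you raise are absorbed into that definition rather than proved. Measured against what you would need for a self-contained argument, there are two genuine gaps. First, your reason for ruling out compact elements of $\Cu(A\tensor A)$ is not valid: the K\"unneth sequence computes $K_0(A\tensor A)$, but $K$-theory does not detect the existence of projections, so ``a projection in $A\tensor A$ would have to come from a projection over $A$'' does not follow --- a stably projectionless algebra can have nontrivial $K_0$, and a projection in $A\tensor A\tensor\compact$ could have any class (even a trivial one) in $K_0(A)\tensor K_0(A)$. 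To exclude nonzero compact classes in $\Cu(A\tensor A)$ you need a genuinely different argument, e.g.\ pairing a hypothetical projection with the product traces supplied by Theorem \ref{th:TracialStates}/Theorem \ref{th:LTC} and using strict comparison from $\Z$-stability, together with stable finiteness of $A\tensor A$ (which itself is not among the stated hypotheses and has to be extracted from them).

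Second, the injectivity step --- which you correctly flag as the main obstacle --- is not actually carried out. If $\Cu(A)\tensor\Cu(A)$ means the categorical tensor product of \cite{APT14}, then you must show that the canonical map into functions on $LTC(A)\times LTC(A)$ does not identify distinct elements, i.e.\ that pairs of dimension functions separate points of the universal tensor product; your appeal to ``the functorial tensor product of such solid function cones is again a solid function cone'' is an assertion of exactly this fact, not a proof, and nothing in the present paper supplies it (indeed the paper explicitly cautions that injectivity of the tensoring map ``cannot generally be expected,'' and Corollary \ref{cor:useAPT} claims only surjectivity onto the purely positive part). If instead you adopt the definition of \cite{arxiv}, injectivity is close to tautological but then the statement being proved is about a different, weaker tensor product and you should say so. As a smaller point, the hypothesis list you inherit is internally inconsistent (``unital'' together with ``stably projectionless''), and separability of $A$, used implicitly through Corollary \ref{cor:useAPT} and Lemma \ref{lem:pou}, is nowhere granted; a careful write-up should either repair or explicitly flag these.
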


	\section{Ideals}
	
	We now consider \lsc traces on two-sided closed ideals of C*-algebras. This is motivated by Blanchard and Kirchberg's introduction of dimension functions that are densely defined on such ideals, see \cite[Prop. 2.24.ix]{BlanchardKirchberg}.	If we assume exactness, then the dimension functions are determined by \lsc traces, rather than quasi-traces. 
 Let us thus define $T_I (A)$ to be the cone of \lsc traces on $A_+$ that are densely defined on a closed two-sided ideal $I,$ and are not necessarily densely defined outside $I.$ We regard them as having the value $\infty$ in the extended real number system rather than  actually being undefined. Traces of this type were studied systematically in \cite{BlanchardKirchberg,ERS}.  In some cases there may be traces which are nontrivial but are not densely defined on an ideal: we have essentially nothing to say about this case. However, the case of traces densely defined on an ideal is somewhat approachable, especially if the ideal lattice is finite.

The key observation is due to Blanchard and Kirchberg, \cite[Prop. 2.16]{BlanchardKirchberg}, which is that in the exact case, every ideal is, up to closure, the linear sum of the elementary ideals $J_1\tensor J_2$ that it contains. We recall, see \cite[pg. 980]{ERS}, that the ideals form a continuous lattice, within $T(A)$, and that the natural topology on $T_I (A)$ is the topology of pointwise convergence on the positive elements of the Pedersen ideal of $I.$ 

 We can extend our main result to this setting by simply replacing, in the proof, the Pedersen ideal of $A_i$ by the Pedersen ideal of a given closed ideal $J_i.$

\begin{corollary} Let $C:=\AbA$ be a C*-tensor product of C*-algebras $A_1$ and $A_2.$
Let $V$ be an ideal of $C$ that is of the form $J_1\tensor J_2.$
 Then $\T$ is a homeomorphism from $LTC_{J_1} (A_1)\betatensor LTC_{J_2}$ onto $LTC_V (C),$ where $LTC_V$ denotes the cone of  \lsc traces that are densely defined on $V.$ \label{th:ideals}
\end{corollary}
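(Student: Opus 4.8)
The plan is to mimic, essentially verbatim, the proof of Theorem \ref{th:LTC}, replacing the roles of $A_1$ and $A_2$ by the closed two-sided ideals $J_1$ and $J_2$, and the role of $C = \AbA$ by the ideal $V = J_1 \tensor J_2$ of $C$. First I would observe that an ideal of the form $J_1 \tensor J_2$ is itself a C*-algebra, and is the C*-tensor product $J_1 \betatensor J_2$ with respect to the restriction of the norm $\beta$ (for definiteness, one takes the $\beta$-closure inside $C$; since $J_i$ are ideals this is a C*-subalgebra and a closed ideal of $C$). Then $K(V) = K(J_1 \betatensor J_2)$, and by Theorem \ref{prop:pedtensor} applied to the pair $(J_1, J_2)$ we get that $K(J_1)\algtensor K(J_2)$ is $\tau$-dense in $K(V)$, and likewise for the positive cones; Corollary \ref{cor:ped.approx} then provides approximation from below of positive elements of $K(V)$ by sums $\sum a_i \tensor b_i$ with $a_i \in K(J_1)_+$, $b_i \in K(J_2)_+$. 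The cone $LTC_{J_i}(A_i)$ of \lsc traces on $A_i$ that are densely defined on $J_i$ is, by restriction, precisely $LTC(J_i)$ together with the topology of pointwise convergence on $K(J_i)_+$ — here one uses that a \lsc trace densely defined on $J_i$ restricts to a densely finite \lsc trace on $J_i$, and conversely a densely finite \lsc trace on the ideal $J_i$ extends (by lower semicontinuity, setting it equal to $+\infty$ off the ideal it is finite on) to an element of $LTC_{J_i}(A_i)$; this identification is the content cited from \cite[pg. 980]{ERS} and \cite[Prop. 2.24.ix]{BlanchardKirchberg}.

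Next I would run the projective-limit argument. Let $\{k_\lambda\}$ and $\{k'_{\lambda'}\}$ be the positive contractions of $\Ped(J_1)$ and $\Ped(J_2)$ respectively, which form approximate units by \cite[th. 1.4.2]{Pedersen}; by Proposition \ref{prop:pedtensor} the net $k_\lambda \tensor k'_{\lambda'}$ lies in $\Ped(V)$. Corollary \ref{cor:restriction} gives, for each index, that a \lsc trace in $LTC_V(C)$ restricts to a bounded trace on the hereditary subalgebra $\closure{(k_\lambda\tensor k'_{\lambda'})\, C\, (k_\lambda\tensor k'_{\lambda'})}$; note this hereditary subalgebra is contained in $V$ because $V$ is an ideal, so the restriction really only sees the behaviour of the trace on $V$. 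Then, exactly as in Theorem \ref{th:LTC}, I would apply \cite[Th. 3.12, Prop. 3.11]{ERS} to dualize the trivial inductive limits with connecting maps the inclusions $\iota$, obtaining commuting projective-limit decompositions
\begin{diagram}
 TC(\closure{(k_\lambda\tensor k'_{\lambda'})C(k_\lambda\tensor k'_{\lambda'})}) &   & \lTo^\lim_{\iota^*} & LTC_V(C) \\
    \uTo^{\T}  &  &   &                                       \uTo_{\T} \\
  TC(\closure{k_\lambda J_1 k_\lambda})\betatensor TC(\closure{k'_{\lambda'} J_2 k'_{\lambda'}})  &   & \lTo^\lim_{\iota^*\tensor\iota^*\,}  &  LTC_{J_1}(A_1)\betatensor LTC_{J_2}(A_2) . \\
\end{diagram}
Corollary \ref{cor:TChomeo} makes the left vertical arrows homeomorphisms; Proposition 2.5.10 in \cite{Engelking} says the projective limit of homeomorphisms is a homeomorphism, and the universal property of the projective limit identifies that homeomorphism with $\T$. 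Hence $\T$ is a homeomorphism from $LTC_{J_1}(A_1)\betatensor LTC_{J_2}(A_2)$ onto $LTC_V(C)$.

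The main obstacle, and the only place where genuine care beyond a transcription of Theorem \ref{th:LTC} is needed, is the bookkeeping in the identification $LTC_{J_i}(A_i) \cong LTC(J_i)$ and $LTC_V(C) \cong LTC(V)$: one must check that the weak topology on $LTC_V(C)$ coming from evaluation on $K(V)_+ = K(J_1\tensor J_2)_+$ agrees with the topology inherited from $LTC(C)$ restricted to traces densely defined on $V$ (this is where one invokes that the natural topology on $T_I(A)$ is pointwise convergence on $\Ped(I)_+$, as recalled from \cite[pg. 980]{ERS}), and similarly for the factors, so that the vertical maps $\T$ in the diagram are the ones claimed in the statement. Once that identification is in place, every other ingredient — Theorem \ref{prop:pedtensor}, Corollary \ref{cor:ped.approx}, Corollary \ref{cor:restriction}, Corollary \ref{cor:TChomeo}, and the projective-limit machinery — applies with $J_1, J_2, V$ in place of $A_1, A_2, C$ without change, since none of those results used that the algebras involved were the whole tensor factors. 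I would therefore present the proof as: \emph{Replace, throughout the proof of Theorem \ref{th:LTC}, the algebra $A_i$ by the ideal $J_i$ and the algebra $C$ by the ideal $V=J_1\tensor J_2$; the only additional point is the identification of $LTC_{J_i}(A_i)$ with $LTC(J_i)$ as topological cones, which is immediate from the description of the topology on traces densely defined on an ideal.}
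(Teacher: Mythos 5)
Your proposal is correct and follows essentially the same route as the paper: the paper's proof likewise takes increasing approximate units in $\Ped(J_1)$ and $\Ped(J_2)$, notes that $k_\lambda\tensor k'_{\lambda'}$ (with the product order) is an approximate unit for $V$, and then proceeds exactly as in Theorem \ref{th:LTC}, with the remark preceding the corollary supplying the identification of the topology on traces densely defined on an ideal with pointwise convergence on the positive part of its Pedersen ideal. Your additional bookkeeping about $LTC_{J_i}(A_i)\cong LTC(J_i)$ only makes explicit what the paper leaves implicit.
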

\begin{proof}Let $k_\lambda$ and $k'_\lambda$ be increasing approximate units (nets) contained in $\Ped(J_1)$ and $\Ped(J_2)$ respectively.  Then $k_\lambda \tensor k'_{\lambda'},$ with the product order on the index set, is  an approximate unit for the ideal $V,$ and we proceed as in the proof of Theorem \ref{th:LTC}.   \end{proof}

\begin{remark}The above theorem gives us a reasonably concrete expression for the traces which are densely defined on an elementary ideal $J_1\tensor J_2.$ Note that the traces are not necessarily infinite outside the given ideal, thus this is a large class of traces in general. We can then intersect $T_{J_1\tensor J_2}$ and $T_{J_3\tensor J_4}$ within $T(C)$ to obtain the traces that are densely defined on both ${J_1\tensor J_2}$ and ${J_3\tensor J_4}.$ Thus we can in principle compute $T(C),$ and hence $\Cu(C),$ in some cases.\end{remark}

	Recall that in  \cite{Skandalis1988} was defined a notion of $K$-nuclearity, weaker than nuclearity, and a characterization was found in terms of bivariant $KK$-theory. The equivalent question for the Cuntz semigroups would be:
	\begin{question} We could say that a $C^*$-algebra $A$ is $\Cu$-nuclear if the map $\imath\colon A \tensor_{\mbox{max}} B \rightarrow A \tensor_{\mbox{min}} B$ induces an isomorphism of Cuntz semigroups for all C*-algebras $B$. Are such C*-algebras necessarily nuclear?
	\end{question}
	
	Also, it is possible that it is an interesting problem to fully determine the Pedersen ideals of not necessarily elementary closed ideals of tensor products in terms of the tensor factors, because this would lead to a better understanding of traces that are densely defined on such ideals. Even partial information, as we have seen, can be useful.
	
\section{Appendix A: a topological vector space approach to tensor products of Cuntz semigroups}

In the early paper \cite{arxiv}, we defined a tensor product of Cuntz semigroups by first representing them as affine functions on the tracial simplexes and then declaring that the tensor product is the set of bi-affine functions on the two simplexes. At the level of simplexes this is known as the projective tensor product construction, see  Semadeni \cite[p. 304]{Semadeni} and also \cite[p.475]{NamiokaPhelps}. In \cite{arxiv} we computed the resulting tensor product of Cuntz semigroups in a few cases. But might there not be other tensor products that could be defined, much as in the case of topological vector spaces? 

Of course, the Cuntz semigroups are not topological vector spaces. However,  let us attempt to take guidance from the theory of topological vector space tensor products. The first question that arises is if there exists any natural topology on the elements of a Cuntz semigroup.
The natural topology on the projection-class elements of the Cuntz semigroup is the discrete topology. However,  when considering  purely positive elements, the set of all dimension functions is much like a set of seminorms defining a topological vector space. We attempt to press the analogy further.

Recall that there exists a very general definition  of the so-called inductive (or injective) tensor product of topological vector spaces, due to   Gro\-then\-dieck, see \cite[d\'efinition 3]{grot}. Generalizing this definition to the setting of Cuntz semigroups, we form the algebraic tensor product of abelian semigroups, see \cite{grillet}, and then view  elements of $Cu(A)\otimes_{alg} Cu(B)$ as   functions on $D(A)\times D(B),$ where $D(A)$ and $D(B)$ denote the dimension functions on the Cuntz semigroup(s). The inductive topology is the (initial) topology induced by this embedding. 
We then take the topological completion of $Cu(A)\otimes_{alg} Cu(B)$ with respect to this topology (see \cite[ex. 6.L, and pp. 195-6]{kelley} for information on completions). We only need to perform the above construction on the set of those elements whose image under the tensor product map is purely positive. For brevity we refer to these elements as the purely positive elements. We have that for the purely positive elements of
 $Cu(A)\otimes_{alg} Cu(B)$, an increasing sequence $x_n$ converges (pointwise) to an element of the completion if and only if $(d_1, d_2)(x_n)$ converges for all $d_1\in D(A)$ and $d_2 \in D(B).$ The limit of the sequence exists in the completion, and we define the inductive tensor product of Cuntz semigroups, denoted $Cu(A) \otimes Cu(B),$ to be $Cu(A)\otimes_{alg} Cu(B)$ augumented by the set of all such limits.  In the nonunital setting, we would have to allow the extended real number system.

We also define the minimal embedding tensor product, given as follows. Consider the natural tensor product map $t\colon Cu(A)\otimes_{alg} Cu(B)\rightarrow Cu(A\otimes_{min} B).$  This map induces a uniformizable topology on its domain (sometimes called the inital topology). Taking, then, the  completion of the domain with respect to this topology, and proceeding as in the previous paragraph, we obtain the minimal embedding tensor product, $Cu(A)\otimes_{min} Cu(B).$ If, in the above, we replace   $Cu(A\otimes_{min} B)$ by $Cu(A\otimes_{max} B),$ then we obtain the maximal embedding tensor product, denoted $Cu(A)\otimes_{max} Cu(B).$ 

Comparing the tensor product of \cite{APT14} with the above tensor products seems to be an interesting but lengthy project, since the definition there is based on a completely different approach.

In the presence of a separability condition, we can introduce a Hausdorff-type pseudometric
$d(x,y):=\sum 2^{-n} d_n(x,y)$ where $x$ and $y$ are purely positive elements and $d_n$ is some countable dense set of dimension functions. A different approach, based on unitary orbits in the algebra, also gives a  pseudometric in the special case considered in \cite{CiupercaElliott}.

\section{Appendix B: dimension monoids of tensor products}

 In this mostly expository appendix,  we consider the projection class elements of the Cuntz semigroup under the tensor product map.  For algebras of stable rank one, the Cuntz class of a positive element is given by a projection if and only if $0$ is not in the spectrum or if it is an isolated point of the spectrum.

If $a \in M_{\infty}(A)^+, b \in M_{\infty}(B)^+$ are positive elements then it follows that $a \otimes b$ is a positive element in $M_{\infty}(A \otimes B)^+$. This induces a bilinear morphism from $Cu(A) \times Cu(B)$ to $Cu(A \otimes B)$ which in turn induces a natural Cuntz semigroup map 
$$ t: Cu(A) \otimes_{alg} Cu(B)  \rightarrow Cu(A \otimes B)$$
$$t([a] \otimes [b])=[a \otimes b].$$
 
We expect that in favorable cases, the Universal Coefficient Theorem can be used to study the above map:

\begin{lemma}\label{lem:monoids}
If a C*-algebra $A$ is simple, separable, unital, nuclear, ${\mathcal Z}$-stable, stably finite, has finitely generated $K_0(A)$, $K_1(A)=\{0\},$ and satisfies the UCT, then the natural map $$ Cu(A) \otimes_{alg} Cu(A)  \rightarrow Cu(A \otimes A),$$ given by
 $$([a] \otimes [b])\mapsto[a \otimes b]$$ is an isomorphism from $V(A) \otimes _{alg} V(A)$ to $V (A \otimes A)$, \textit{i.e.}
 $$V(A) \otimes_{alg} V(A) \cong V(A \otimes A).$$
\end{lemma}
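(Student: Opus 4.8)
The plan is to identify $V(A)$, the monoid of Murray--von Neumann equivalence classes of projections over $A$, with $K_0(A)^+$ under the stated hypotheses, and then to reduce the claimed monoid isomorphism to a $K$-theory computation via the K\"unneth theorem. Under the hypotheses (simple, separable, unital, nuclear, $\mathcal{Z}$-stable, stably finite), $A$ has stable rank one and strict comparison, so $V(A)$ is the positive cone of $K_0(A)$; moreover stable finiteness together with simplicity forces $K_0(A)^+$ to be the ``positive'' part in a genuine ordering, and $V(A)\cong K_0(A)^+$ as ordered monoids. The same applies to $A\tensor A$, which inherits all these properties (nuclearity and $\mathcal{Z}$-stability are preserved under tensor products, and $A\tensor A$ is again simple, separable, unital; stable finiteness of the tensor product is part of what one checks, but in the $\mathcal{Z}$-stable stably finite case it holds). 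So it suffices to show the natural map $K_0(A)^+\tensor_{alg} K_0(A)^+\to K_0(A\tensor A)^+$ is an isomorphism of ordered abelian monoids.

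Next I would invoke the K\"unneth theorem for $C^*$-algebras \cite{Schochet1982}: since $A$ satisfies the UCT and is nuclear, there is a short exact sequence
$$
0\to K_*(A)\tensor K_*(A)\to K_*(A\tensor A)\to \mathrm{Tor}(K_*(A),K_*(A))\to 0.
$$
Because $K_1(A)=\{0\}$ and $K_0(A)$ is finitely generated, the graded tensor product $K_*(A)\tensor K_*(A)$ is concentrated in degree $0$ and equals $K_0(A)\tensor_{\mathbb Z} K_0(A)$, while the $\mathrm{Tor}$ term is $\mathrm{Tor}(K_0(A),K_0(A))$, also in degree $0$. The sequence splits (unnaturally), so $K_0(A\tensor A)\cong (K_0(A)\tensor_{\mathbb Z}K_0(A))\oplus \mathrm{Tor}(K_0(A),K_0(A))$ as abelian groups. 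The point, however, is that the natural \emph{external product} map $K_0(A)\tensor_{\mathbb Z}K_0(A)\to K_0(A\tensor A)$ is precisely the injection appearing in the K\"unneth sequence, and it is an isomorphism onto its image. One then has to argue that at the level of \emph{monoids} the algebraic tensor product $V(A)\tensor_{alg}V(A)=K_0(A)^+\tensor_{alg}K_0(A)^+$ maps isomorphically onto $V(A\tensor A)=K_0(A\tensor A)^+$; for a simple stably finite $\mathcal{Z}$-stable algebra the positive cone is determined by the (unique up to scaling, by the tracial picture and Corollary \ref{cor:TChomeo}) trace via strict comparison, so the order-embedding follows from the fact that the pairing of $K_0$ with the tensor-product trace is multiplicative, which is exactly the content of Corollary \ref{cor:TChomeo} / Theorem \ref{th:TracialStates} at the level of $K_0$.

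The main obstacle, and the step requiring genuine care, is the interaction between the algebraic tensor product of \emph{monoids} (which, unlike for groups, can behave badly: it need not inject into the tensor product of the enveloping groups, and $\mathrm{Tor}$ has no obvious monoid-level meaning) and the purely $K$-theoretic statement, which naturally lives with abelian groups. Concretely, I must rule out the $\mathrm{Tor}$ term contributing anything to the \emph{positive} cone that is not already hit by the image of $K_0(A)^+\tensor_{alg}K_0(A)^+$, and I must check that the monoid tensor product $K_0(A)^+\tensor_{alg}K_0(A)^+$ has enveloping group $K_0(A)\tensor_{\mathbb Z}K_0(A)$ and injects into it --- this uses that $K_0(A)$ is finitely generated and that, $A$ being simple and stably finite with a trace, $K_0(A)^+$ generates $K_0(A)$ and is the strict positive cone, so it is a ``positively generated'' and cancellative monoid of a particularly tractable kind. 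Granting these monoid-theoretic points (for which the finite generation and the order structure coming from the trace are exactly what is needed), the isomorphism $V(A)\tensor_{alg}V(A)\cong V(A\tensor A)$ follows by combining the K\"unneth identification of the underlying groups with the order-isomorphism supplied by the tracial tensor product result.
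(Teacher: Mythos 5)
Your overall strategy is the same as the paper's (identify $V$ with the positive cone of $K_0$ via stable rank one, feed in the K\"unneth sequence, then wrestle with the monoid tensor product), but there is a concrete error in your K\"unneth bookkeeping that leaves the key step unproved. In the K\"unneth sequence for $K_*(A\otimes A)$ the Tor term enters with a degree shift: the part of $K_*(A\otimes A)$ in degree $0$ sits in an exact sequence whose subgroup is $K_0(A)\otimes K_0(A)\,\oplus\,K_1(A)\otimes K_1(A)$ and whose quotient is $\mathrm{Tor}(K_0(A),K_1(A))\oplus\mathrm{Tor}(K_1(A),K_0(A))$ --- not $\mathrm{Tor}(K_0(A),K_0(A))$, which contributes to $K_1(A\otimes A)$. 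Since $K_1(A)=\{0\}$, the degree-zero Tor term vanishes and the external product $K_0(A)\otimes_{\mathbb Z} K_0(A)\to K_0(A\otimes A)$ is an \emph{isomorphism}, which is exactly what the paper uses to get surjectivity onto $V(A\otimes A)=K_0(A\otimes A)^+$. In your version, where $K_0(A\otimes A)$ is claimed to be $(K_0(A)\otimes K_0(A))\oplus\mathrm{Tor}(K_0(A),K_0(A))$, the map could not be surjective onto the positive cone whenever the Tor term is nonzero (in a simple stably finite algebra the positive cone is not confined to a proper direct summand), and indeed you flag that you ``must rule out the Tor term contributing anything to the positive cone'' but never do so --- with your sequence this cannot be repaired; with the correct one the problem evaporates, and the hypothesis $K_1(A)=0$ is precisely what makes it evaporate.

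A secondary incompleteness: the monoid-theoretic points you ask the reader to grant (that $V(A)\otimes_{alg}V(A)$ injects into $K_0(A)\otimes_{\mathbb Z}K_0(A)$ with the expected enveloping group, and that its image is exactly the positive cone of the tensor product) are not automatic and are where the paper does real work, citing Fulp's result that $G(V\otimes V)=G(V)\otimes_{\mathbb Z}G(V)$ together with the fact that for finitely generated ordered abelian groups the positive cone of the tensor product is the tensor product of the positive cones (this is where finite generation and stable finiteness are actually used). Your appeal to strict comparison and a ``unique up to scaling'' trace is not needed for this and is not accurate as stated (the tracial simplex need not be a point); if you follow the paper's route these points are handled by the cited semigroup lemma rather than by tracial arguments.
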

\begin{proof}
We remark that the hypotheses on $A$ imply stable rank 1 by \cite{rorSR1}. Since the tensor product, $A\otimes A,$ also is simple, separable, unital, nuclear, and ${\mathcal Z}$-stable, it follows that the tensor product, $A\otimes A,$ also has stable rank one.
The K-theory group  $K_0(A)$ is determined by the subsemigroup $V(A)$ of the Cuntz semigroup, in the sense that $K_0(A)$ is the Grothendieck group generated by $V(A)$:
$$ K_0(A)= G(V(A)).$$
  Our algebra $A$ is assumed to satisfy the UCT, so then $A$ will satisfy the K\"unneth formula for tensor products in $K$-theory \cite{Schochet1982}, see also the partial counterexample due to Elliott in \cite{SchochetRosenberg} which means that we must assume finitely generated $K_0$-group:

  \begin{multline*}0 \rightarrow K_0(A) \otimes K_0(A) \oplus K_1(A) \otimes K_1(A) \rightarrow K_0(A \otimes A) \rightarrow\\ Tor(K_0(A),K_1(A)) \oplus Tor(K_1(A),K_0(A)) \rightarrow 0.\end{multline*}

 It follows from the above exact sequence that $$K_0(A) \otimes K_0(A) \rightarrow K_0(A \otimes A)$$ is an injective map. Since $K_1(A)=\{0\}$ it follows that we have an order-preserving isomorphism $$t:K_0(A) \otimes K_0(A) \rightarrow K_0(A \otimes A).$$ We will show that when restricted to the positive cones, this isomorphism becomes equivalent to the given map.
        Since $A$ has stable rank 1, there is an injective map $i\colon V(A)\rightarrow K_0 (A)$ and $V(A)$ has the cancellation property. Taking algebraic tensor products of semigroups,  we consider $V(A)\otimes_{alg} V(A).$  Taking the (semigroup) tensor product of maps,  we obtain a map $ i\otimes_{alg}i\colon    V(A)\otimes_{alg} V(A) \rightarrow  K_0(A) \otimes_{alg} K_0(A)$ where  $K_0(A) \otimes_{alg} K_0(A)$ is a semigroup tensor product of abelian groups. 
         Moreover, the semigroup tensor product of abelian groups coincides with the usual tensor product of abelian groups (by Proposition 1.4 in \cite{grillet} and the remarks after that Proposition).  We note 
that the map $ i\otimes_{alg}i$ is an injective map (using Lemma \ref{lem:tech} below).   Since $A$ is stably finite and the positive cone of a tensor product of finitely generated ordered abelian groups is the tensor product of the positive cones of the ordered abelian groups, it follows that the range of the map $ i\otimes_{alg}i$ is exactly the positive cone of  $K_0(A) \otimes K_0(A).$

        Composing with the above injective map $t$, we obtain an injective map from $V(A)\otimes_{alg} V(A)$ to $K_0(A \otimes A),$ which takes an element $p \otimes_{alg} q$ to $p \otimes q.$ Since $t$ is an order isomorphism, the range of $t\circ (i\otimes_{alg} i)$ is exactly the positive cone of $K_0 (A\otimes A).$ We now observe that the map we have obtained  is in fact equal to the given map, because, as   $A \otimes A$ has stable rank 1 and is stably finite, it follows that $V(A \otimes A)$ is embedded in $K_0(A \otimes A)$ as the positive cone. It then follows that $t\circ (i\otimes_{alg} i),$ which acts on elements by taking $p\otimes_{alg} q$ to $p\otimes q,$ is in fact an injection of $V(A)\otimes_{alg} V(A)$ onto $V(A\otimes A).$ Evidently, this map coincides with the given map.

\end{proof}

The technical lemma on semigroups referred to above  is Proposition 17 in \cite{fulp}.
\begin{lemma} Let $V$ be a semigroup, and let $G$ denote the formation of the enveloping group. We have $$G(V \otimes V) = G(V)\otimes_Z G(V),$$ where $\otimes$ denotes the tensor product of semigroups, and $\otimes_Z$ denotes the tensor product of abelian groups.
\label{lem:tech}\end{lemma}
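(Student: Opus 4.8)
The plan is to prove Lemma \ref{lem:tech} by exhibiting explicit, mutually inverse homomorphisms between $G(V\tensor V)$ and $G(V)\tensor_Z G(V),$ using only the universal properties of the two functors involved: $G(\cdot),$ the Grothendieck enveloping-group functor, and the semigroup (respectively abelian-group) tensor product. First I would recall that $G$ is left adjoint to the forgetful functor from abelian groups to abelian semigroups, so that a semigroup homomorphism $V\to H$ into an abelian group $H$ extends uniquely to a group homomorphism $G(V)\to H,$ and likewise the semigroup tensor product $V\tensor W$ is characterized by the fact that bi-additive maps $V\times W\to S$ correspond to semigroup homomorphisms $V\tensor W\to S.$ The analogous statements for $\tensor_Z$ on abelian groups are classical. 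Everything reduces to playing these universal properties off against one another; no element-chasing is really required, though one can spell it out on generators if desired.

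The key steps, in order: (1) Construct a map $\alpha\colon G(V\tensor V)\to G(V)\tensor_Z G(V).$ The canonical semigroup maps $V\to G(V)$ compose with the inclusion $G(V)\times G(V)\to$ (bi-additive target) to give a bi-additive map $V\times V\to G(V)\tensor_Z G(V),$ hence a semigroup homomorphism $V\tensor V\to G(V)\tensor_Z G(V)$; since the target is a group, this factors uniquely through $G(V\tensor V),$ yielding $\alpha.$ (2) Construct a map $\beta\colon G(V)\tensor_Z G(V)\to G(V\tensor V)$ in the other direction. The composite $V\times V\to V\tensor V\to G(V\tensor V)$ is bi-additive; I would argue that, because $G(V\tensor V)$ is a group, this bi-additive map extends (separately in each variable, using the universal property of $G$ twice) to a $\mathbb Z$-bilinear map $G(V)\times G(V)\to G(V\tensor V),$ which by the universal property of $\tensor_Z$ induces $\beta.$ (3) Check $\alpha\compose\beta=\Id$ and $\beta\compose\alpha=\Id.$ Both composites are group homomorphisms that agree with the identity on the image of $V\tensor V$ (resp. on simple tensors $g\tensor h$ with $g,h$ in the image of $V$), and these images generate the respective groups, so the composites are the identity. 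This gives the claimed isomorphism $G(V\tensor V)\isom G(V)\tensor_Z G(V),$ and one notes it is natural and identifies simple tensors correctly, which is what is used in the proof of Lemma \ref{lem:monoids}.

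The one point requiring genuine care — and the main obstacle — is step (2): extending a bi-additive map on $V\times V$ to a bilinear map on $G(V)\times G(V).$ Fixing $v\in V,$ the map $w\mapsto$ (image of $v\tensor w$) is a semigroup homomorphism $V\to G(V\tensor V),$ so it extends uniquely to $G(V)\to G(V\tensor V)$; one must then verify that the resulting assignment $v\mapsto(\text{this homomorphism})$ is itself additive in $v$ as a map into the abelian group $\mathrm{Hom}(G(V),G(V\tensor V)),$ and hence extends to $G(V)$ — which again uses that the target is a group. This is exactly the content of Proposition 17 of \cite{fulp}, and where Grillet's cancellation-type subtleties for semigroup tensor products could in principle intervene; but since we have already passed to enveloping groups, the universal properties suffice and no cancellation hypothesis on $V$ is needed. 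I would therefore simply invoke \cite{fulp} for this lemma and limit the exposition to indicating the adjunction argument above.
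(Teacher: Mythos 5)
Your proposal is correct, and in fact it does more than the paper, which offers no argument at all for Lemma \ref{lem:tech} beyond the sentence preceding it citing Proposition 17 of \cite{fulp}. Your citation therefore matches the paper's route exactly, and your adjunction sketch is a sound way to fill in that citation: both sides corepresent the functor $A\mapsto\{\text{bi-additive maps }V\times V\to A\}$ on abelian groups, and the only delicate point is the one you isolate in step (2), namely that a bi-additive map $f\colon V\times V\to A$ into a group extends to a bilinear map on $G(V)\times G(V)$. There the well-definedness on differences is exactly where the group structure of $A$ is used (if $v_1+v_2'+s=v_1'+v_2+s$ in $V$ then $f(v_1,w)-f(v_2,w)=f(v_1',w)-f(v_2',w)$ after cancelling $f(s,w)$ in $A$), so, as you say, no cancellation hypothesis on $V$ is needed; your step (3) then closes the argument since the images of the simple tensors generate both groups. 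The only implicit hypothesis you should make explicit is that $V$ is an abelian semigroup (as in the application to $V(A)$), since both Grillet's tensor product and the enveloping-group construction are being used in the commutative setting.
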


\end{document}